\documentclass[]{scrartcl}

\usepackage[utf8]{luainputenc}
\usepackage[USenglish]{babel}
\usepackage{csquotes}

\usepackage[a4paper,top=27mm,bottom=20mm,inner=25mm,outer=20mm]{geometry}

\usepackage{graphicx}
\usepackage{verbatim}
\usepackage{stmaryrd}
\usepackage{amsmath}
\usepackage{comment}
\usepackage{listings}
\usepackage[table,xcdraw]{xcolor}
\usepackage{color}
\usepackage{amsthm}
\usepackage{times}
\usepackage[hidelinks]{hyperref}
\usepackage{multirow}
\usepackage{makecell}
\usepackage{booktabs}
\usepackage{array}
\usepackage{enumitem}
\usepackage{amssymb}
\usepackage{subcaption}
\usepackage{siunitx}
\usepackage{bm}
\usepackage[normalem]{ulem}
\usepackage{adjustbox}

\usepackage[T1]{fontenc}
\usepackage{newpxtext,newpxmath}

\usepackage[%
  backend=bibtex,bibencoding=ascii,
  style=numeric-comp,
  giveninits=true, uniquename=init, 
  natbib=true,
  url=false,
  doi=true,
  isbn=false,
  backref=false,
  maxnames=99,
]{biblatex}
\definecolor{dkgreen}{rgb}{0,0.6,0}
\definecolor{gray}{rgb}{0.5,0.5,0.5}
\definecolor{mauve}{rgb}{0.58,0,0.82}

\newcommand*{\ldblbrace}{\{\mskip-5mu \left \{}
\newcommand*{\rdblbrace}{ \}\mskip-5mu \right \}}
\newcommand{\mean}[1]{\left\ldblbrace #1 \right\rdblbrace}
\newcommand*{\ldblparen}{(\mskip-3mu \left (}
\newcommand*{\rdblparen}{ )\mskip-3mu \right )}
\newcommand{\prodmean}[1]{\left\ldblparen #1 \right\rdblparen}
\newcommand{\mtx}[1]{\boldsymbol{#1}}
\renewcommand{\vec}[1]{\boldsymbol{#1}}
\newcommand{\vecs}[1]{\underline{#1}}
\newcommand{\imtx}[1]{\smash{\underline{\underline{#1}}}}
\newcommand{\mtxs}[1]{\underline{\underline{#1}}}

\newcommand{\jump}[1]{\left\llbracket #1  \right\rrbracket}
\newtheorem{theorem}{Theorem}[section]

\newtheorem{lemma}[theorem]{Lemma}
\newtheorem{remark}[theorem]{Remark}
\theoremstyle{definition}
\newtheorem{definition}[theorem]{Definition}
\newcommand{\todo}[1]{{\large\color{red}{#1}}}

\newcommand{\orcid}[1]{ORCID:~\href{https://orcid.org/#1}{#1}}
\usepackage{authblk}

\definecolor{dkgreen}{rgb}{0,0.6,0}
\definecolor{gray}{rgb}{0.5,0.5,0.5}
\definecolor{mauve}{rgb}{0.58,0,0.82}

\newenvironment{keywords}{\par\textbf{Key words.}}{\par}
\newenvironment{AMS}{\par\textbf{AMS subject classification.}}{\par}

\title{On Affordable High-Order Entropy-Conservative/Stable and Well-Balanced Methods for Nonconservative Hyperbolic Systems}

\author[1]{Marco Artiano\thanks{\orcid{0009-0009-5872-702X}}}
\affil[1]{Institute of Mathematics, Johannes Gutenberg University Mainz, Germany}

\author[1]{Hendrik Ranocha\thanks{\orcid{0000-0002-3456-2277}}}

\date{March 20, 2026}

\begin{document}
\maketitle

\begin{abstract}
Many entropy-conservative and entropy-stable (summarized as entropy-preserving) methods for hyperbolic conservation laws rely on Tadmor's theory for two-point entropy-preserving numerical fluxes and its higher-order extension via flux differencing using summation-by-parts (SBP) operators, e.g., in discontinuous Galerkin spectral element methods (DGSEMs).
The underlying two-point formulations have been extended to nonconservative systems using fluctuations by Castro et al.\ (2013, \href{https://doi.org/doi/10.1137/110845379}{doi:10.1137/110845379}) with follow-up generalizations to SBP methods.
We propose specific forms of entropy-preserving fluctuations for nonconservative hyperbolic systems that are simple to interpret and allow an algorithmic construction of entropy-preserving methods.
We analyze necessary and sufficient conditions, and obtain a full characterization of entropy-preserving three-point methods within the finite volume framework.
This formulation is extended to SBP methods in multiple space dimensions on Cartesian and curvilinear meshes.
Additional properties such as well-balancedness extend naturally from the underlying finite volume method to the SBP framework.
We use the algorithmic construction enabled by the chosen formulation to derive several new entropy-preserving schemes for nonconservative hyperbolic systems, e.g., the compressible Euler equations of an ideal gas using the internal energy equation and a dispersive shallow-water model.
Numerical experiments show the robustness and accuracy of the proposed schemes.
\end{abstract}

\begin{keywords}
  structure-preserving methods,
  entropy-conserving methods,
  entropy-stable methods,
  kinetic energy-preserving methods,
  pressure equilibrium-preserving methods,
  nonconservative terms,
  summation-by-parts operators,
  discontinuous Galerkin methods,
  curvilinear coordinates
\end{keywords}

\begin{AMS}
  65M12, 
  65M20, 
  65M70, 
  65M60, 
  65M06, 
  65M08 
\end{AMS}
\newpage
\section{Introduction}
For hyperbolic conservation laws such as
\begin{equation}
	\partial_t \vec{u} + \partial_x \vec{f}(\vec{u}) = 0,
	\label{conservation_law}
\end{equation}
convex entropies are crucial for analysis and numerics.
From a theoretical point of view, they provide stability for classical solutions~\cite[Section~V]{dafermos2016hyperbolic} as well as selection criteria for weak~\cite[Sections~VIII and IX]{dafermos2016hyperbolic} or even more generalized solutions~\cite{feireisl2021numerical}.
From a numerical point of view, entropy-stable methods improve the robustness of under-resolved simulations significantly~\cite{gassner2016split,Chan2022OnTE,sjogreen2018high,rojas2021robustness,badrkhani2025entropy,aiello2026impact}.
Today, many efficient and entropy-conservative/stable (EC/ES, summarized as entropy-preserving) methods for conservation laws~\cite{ranocha2023efficient,winters2018comparative} are based on the seminal work of Tadmor~\cite{Tadmor1987,Tadmor2003}, who introduced the concept of entropy-preserving numerical fluxes for finite volume methods
\begin{equation}
	\partial_t \vec{u}_i
    + \frac{1}{\Delta x_i} \Bigl (
        \vec{f}^{\mathrm{num}}( \vec{u}_{i}, \vec{u}_{i+1} )
        - \vec{f}^{\mathrm{num}}( \vec{u}_{i-1}, \vec{u}_{i} )
    \Bigr ) = \vec{0}.
	\label{FV_conservation}
\end{equation}
The extension to high-order methods has been successively obtained in~\cite{LeFloch2006,FISHER2013518,chen2017entropy,Ranocha2018,chan2018discretely,chan2019skew}, relying on the general framework of summation-by-parts (SBP) operators~\cite{svard2014review,fernandez2014review}.
SBP operators are discrete derivative and integration operators that mimic integration by parts.
While they have originally been developed for finite differences (FDs)
\cite{kreiss1974finite,strand1994summation,carpenter1994time},
their general framework also includes finite volumes~\cite{nordstrom2001finite},
continuous Galerkin (CG) finite elements~\cite{Hicken2016_SBP,hicken2020entropy,abgrall2020analysisI},
discontinuous Galerkin~(DG) methods~\cite{gassner2013skew,carpenter2014entropy},
flux reconstruction~(FR)~\cite{huynh2007flux,vincent2011newclass,ranocha2016summation},
active flux methods~\cite{eymann2011active,barsukow2025stability},
meshless schemes~\cite{hicken2025constructing,kwan2025robust},
and cut-cell methods~\cite{petri2026kinetic,taylor2025entropy}.

In contrast to classical conservation laws such as \eqref{conservation_law}, nonconservative hyperbolic systems
\begin{equation}
\partial_t \vec{u} + \vec{A}(\vec{u}) \partial_x \vec{u} = \vec{0}
\label{special_nonconservative_system}
\end{equation}
are more difficult to handle.
For example, the nonconservative product needs to be defined carefully for weak solutions; the shock relations depend not only on the solution states but also on the path connecting these states~\cite{Maso1995DefinitionAW}.
This has led to the development of path-conservative numerical methods for nonconservative systems~\cite{Pares2006}; extensions to DG methods have been carried out by various authors~\cite{RHEBERGEN20081887,FRANQUET20124096}.
However, path conservation alone is not enough to ensure correct solutions~\cite{CASTRO20088107,ABGRALL20102759}.
Indeed, nonconservative systems are susceptible to the correct form of (artificial, effective) dissipation.
Thus, entropy-preserving schemes are important here as well~\cite{Castro2013,beljadid2017schemes}.
The extension of Tadmor's entropy framework to nonconservative systems has been initiated in~\cite{Castro2013} by considering finite volume methods in fluctuation form, i.e.,
\begin{equation}
	\partial_t \vec{u}_i
    + \frac{1}{\Delta x_i} \Bigl (
        \vec{D}^+( \vec{u}_{i-1}, \vec{u}_{i} ) +
        \vec{D}^-( \vec{u}_{i}, \vec{u}_{i+1} )
    \Bigr ) = \vec{0},
	\label{FV_fluctuations}
\end{equation}
where the fluctuations satisfy the consistency condition $\vec{D}^\pm(\vec{u}, \vec{u}) = \vec{0}$ and an additional path consistency condition involving an integral~\cite{Pares2006,Castro2013}.
Assuming that the nonconservative system \eqref{special_nonconservative_system} is equipped with a convex entropy $U$, i.e., the entropy inequality
\begin{equation}
    \partial_t U(\vec{u}) + \partial_x F(\vec{u}) \leq 0
\end{equation}
is demanded for weak solutions and holds as equality for smooth solutions, Castro et al.~\cite[Theorem~2.1]{Castro2013} derived the sufficient condition
\begin{equation}
    \forall \vec{u}_-, \vec{u}_+\colon \quad
    \vec{\omega}(\vec{u}_-) \cdot \vec{D}^-( \vec{u}_{-}, \vec{u}_{+} )
    + \vec{\omega}(\vec{u}_+) \cdot \vec{D}^+( \vec{u}_{-}, \vec{u}_{+} )
    = F(\vec{u}_+) - F(\vec{u}_-)
    \label{CastroEC}
\end{equation}
for an EC method, where $\vec{\omega} = U'(\vec{u})$ are the entropy variables.
These methods have been extended to 1D SBP DG methods by Renac~\cite{RENAC20191,coquel2021entropy} and to curved meshes in multiple dimensions by Waruszewski et al.~\cite{WARUSZEWSKI2022111507}, who also considered nonconservative terms of the form $\vec{H}(\vec{u}) \partial_x \vec{g}(\vec{u})$ instead of $\vec{A}(\vec{u}) \partial_x \vec{u}$.
Recently, Ersing and Winters~\cite{ersing2026newclassentropystable} proposed two particular strategies to compute EC fluctuations: one based on an integral formulation and another assuming a specific form of two-point fluxes, provided that a closed-form expression can be found.
However, their construction still relies on integrals.

Tadmor presented a general choice of EC fluxes for conservation laws~\cite{Tadmor1987,Tadmor2003} based on an integral form.
In practice, these integrals are typically hard or even impossible to evaluate (exactly and efficiently).
Thus, it took some time until affordable EC fluxes have been constructed without the need to compute integrals~\cite{ismail2009affordable,ECChandra,Ranocha2018}.
Moreover, Castro et al.~\cite[Remark~4.2]{Castro2013} observed:
\begin{quote}
    This example indicates that the choice of paths is not crucial in determining which solutions are approximated by the scheme.
    Instead, the numerical viscosity operator (that matches with the underlying viscosity) decides which weak solution the scheme will converge to.
\end{quote}
This is fortunate, since the correct choice of paths is often far from trivial, and many works have thus resorted to the default choice of a straight line path.
This motivates our current manuscript.
By focusing on the entropy, we first propose concrete forms of fluctuations that are easy to interpret and amenable to the algorithmic construction of affordable entropy-conserving methods, without the need to evaluate any integrals (including integrals related to choosing a path).
We provide both necessary and sufficient conditions for entropy conservation and stability, and demonstrate how to use them to construct entropy-preserving methods for several hyperbolic systems.
Moreover, we extend the results to high-order SBP methods on curved meshes in multiple dimensions.
Furthermore, we demonstrate how well-balanced three-point finite volume schemes extend to well-balanced high-order SBP methods on curved meshes.

In contrast to works on path-conservative methods for nonconservative systems, we consider the general form
\begin{equation}
    \partial_t \vec{u} + \partial_x \vec{f}(\vec{u}) + \vec{H}(\vec{u}) \partial_x \vec{g}(\vec{u}) = \vec{0},
    \label{nonconservative_system}
\end{equation}
separating conservative parts from nonconservative ones and allowing general derivatives in nonconservative terms.
This is important for many applications, e.g., to allow differentiating the pressure or the velocity for the compressible Euler equations, which are not conserved variables.
Moreover, we do not only provide sufficient conditions, but also study their necessity in a more general context without requiring convexity of the entropy.
In fact, while Tadmor's integral form of an EC flux requires a convex entropy (to obtain the entropy variables $\vec{\omega} = U'(\vec{u})$ as a bijective mapping from the conserved variables $\vec{u}$), his classical conditions for EC fluxes do not require convexity of the entropy, but are at the heart a set of algebraic relations.
We demonstrate this in Section~\ref{sec:conservation_laws} before extending the results to nonconservative systems \eqref{nonconservative_system} in Section~\ref{sec:nonconservative_systems}.
This is essential and has not been appreciated sufficiently, since the same conditions allow the derivation of general conservation relations, e.g., the conservation of the total energy when working with the potential temperature as thermodynamic state variable for atmospheric flows~\cite{artiano2025structurepreservinghighordermethodscompressible}.

Entropy-preserving methods for conservation laws are linked to split forms, e.g., kinetic energy-preserving methods \cite{gassner2016split} or alternative approaches for entropy-conserving methods \cite{DEMICHELE2025114262}.
We extend these works by pointing out explicitly how certain split forms can be recovered using the concrete forms of fluctuations considered in this work.

This paper is organized as follows: in Section~\ref{sec:conservation_laws}, we formulate Tadmor's condition as a purely algebraic condition and demonstrate that it is necessary and sufficient also for non-convex functionals.
In Section~\ref{sec:nonconservative_systems}, we formulate an algebraic characterization of entropy-preserving finite volume methods in fluctuation form. Specific semi-discrete forms of the nonconservative term are also introduced, for which the corresponding algebraic characterization and entropy-preserving conditions are derived.
In Section~\ref{sec:numerical_fluxes}, we demonstrate the systematic construction of entropy-preserving numerical fluxes based on the specific semi-discrete forms of the nonconservative terms for several systems.
The extension to high-order summation-by-parts (SBP) operators in both Cartesian and curvilinear coordinates, including a discussion about the generation of nonconservative split forms, is presented in Sections~\ref{sec:sbp_cartesian} and~\ref{sec:sbp_curvilinear}, including the extension of well-balanced schemes to curved meshes.
In Section~\ref{sec:numerical_experiments}, we present numerical experiments to assess the results of the previous sections.
Finally, concluding remarks and future perspectives are given in Section~\ref{sec:summary_conclusions}.
\section{Hyperbolic conservation laws}
\label{sec:conservation_laws}
We consider the hyperbolic conservation law
\begin{equation}
\tag{\ref{conservation_law}}
	\partial_t \vec{u} + \partial_x \vec{f}(\vec{u}) = \vec{0},
\end{equation}
where $\vec{u}(x,t) \in Y \subset \mathbb{R}^n$ are the conserved quantities, $Y$ is an appropriate open set, and $\vec{f}\colon Y \to \mathbb{R}^n$ is the flux, supplemented with appropriate initial and boundary conditions.
Tadmor's classical theory~\cite{Tadmor1987,Tadmor2003} builds on the existence of a convex entropy $U\colon Y \to \mathbb{R}$ with associated entropy flux $F\colon Y \to \mathbb{R}$ satisfying
\begin{equation}\label{chain_rule_ec}
    \forall \vec{u} \in Y\colon \quad U'(\vec{u}) \cdot \vec{f}'(\vec{u}) = F'(\vec{u}).
\end{equation}
Thus, smooth solutions of~\eqref{conservation_law} also satisfy the additional conservation law
\begin{equation}
    \partial_t U(\vec{u}) + \partial_x F(\vec{u}) = 0,
    \label{entropy_equality}
\end{equation}
and the entropy inequality
\begin{equation}
    \partial_t U(\vec{u}) + \partial_x F(\vec{u}) \leq 0
    \label{entropy_inequality}
\end{equation}
is used as admissibility criterion for (weak) solutions~\cite{dafermos2016hyperbolic}.

A three-point finite volume semi-discretization of~\eqref{conservation_law} can be written as
\begin{equation}
\tag{\ref{FV_conservation}}
	\partial_t \vec{u}_i + \frac{1}{\Delta x_i} \Bigl ( \vec{f}^{\mathrm{num}}\bigl( \vec{u}_{i}, \vec{u}_{i+1} \bigr)  - \vec{f}^{\mathrm{num}}\bigl( \vec{u}_{i-1}, \vec{u}_{i} \bigr) \Bigr ) = \vec{0},
\end{equation}
where $\vec{u}_i$ is the approximate cell-averaged solution in the $i$-th cell, $\Delta x_i$ is the cell-size, and $\vec{f}^{\mathrm{num}}\colon Y \times Y \to \mathbb{R}^n$ is a consistent numerical flux function, i.e.,
\begin{equation}
    \forall \vec{u} \in Y\colon \quad \vec{f}^{\mathrm{num}}(\vec{u},\vec{u}) = \vec{f}(\vec{u}).
\end{equation}
Following Tadmor~\cite{Tadmor1987,Tadmor2003}, we are interested in numerical schemes that mimic the entropy equality~\eqref{entropy_equality} or inequality~\eqref{entropy_inequality} discretely.

\begin{definition}\label{def_entropy_conservation}
A finite volume scheme like \eqref{FV_conservation} is entropy-conservative/stable if there is a numerical entropy flux $F^{\mathrm{num}}\colon Y \times Y \to \mathbb{R}$, with $F^{\mathrm{num}}(\vec{u},\vec{u}) = F(\vec{u})$, such that
\begin{equation}
	 U'(\vec{u}_i) \cdot \partial_t \vec{u}_i
     =
     \vec{\omega}_i \cdot \partial_t \vec{u}_i
     \leq
     - \frac{1}{\Delta x_i} \Bigl ( F^{\mathrm{num}}\bigl( \vec{u}_{i}, \vec{u}_{i+1} \bigr) - F^{\mathrm{num}}\bigl( \vec{u}_{i-1}, \vec{u}_{i} \bigr) \Bigr ),
\end{equation}
where $\vec{\omega} = U'$ are the entropy variables and equality has to hold for the entropy-conservative case.
\end{definition}

In his seminal work, Tadmor~\cite{Tadmor1987,Tadmor2003} has obtained local conditions on numerical fluxes ensuring entropy conservation/stability that have been used ever since as a systematic tool to construct entropy-preserving numerical fluxes~\cite{ismail2009affordable,ECChandra,Ranocha2018,thein2025computing,winters2020entropy,lefloch2021kinetic,artiano2025structurepreservinghighordermethodscompressible}.
Although not written out explicitly in detail, the condition for entropy conservation is also observed to be necessary~\cite[page~95]{Tadmor1987}.
However, the original derivation relies on the convexity of the entropy $U$, which guarantees the invertibility of the mapping between conserved variables $\vec{u}$ and the entropy variables $\vec{\omega} = U'(\vec{u})$.
As a preparation for the extension to nonconservative systems, we will next point out that convexity of the entropy is not required for the characterization of entropy-preserving methods and fluxes.

\subsection{Tadmor condition for non-convex functionals}{\label{tadmor_non_convex_entropies}}
In the following, we introduce some standard operators and notations that will be used throughout the paper. We first define the usual mean and jump operators as
\begin{equation}
\mean{a} = \frac{a_+ + a_-}{2}, \qquad \jump{a} = a_+ - a_-,
\end{equation}
where $a_\pm$ denote the values of $a$ at the right/left side of an interface.
For numerical fluxes at a specific interface, we adopt the notation
$\vec{f}^{\mathrm{num}}_{i+1/2} = \vec{f}^{\mathrm{num}}(\vec{u}_{i}, \vec{u}_{i+1})$,
where the first argument of $\vec{f}^{\mathrm{num}}(\vec{u}_-, \vec{u}_+)$ represents the value at the left of the interface and the second argument represents the value at the right.
To simplify notation further, we also write
\begin{equation}
g(\vec{u}_\pm) = g_\pm
\end{equation}
for any function $g$, e.g., the flux $\vec{f}$ or the entropy variables $\vec{\omega}$.

The characterization of entropy-preserving methods and numerical fluxes ultimately relies only on three key ingredients: the numerical flux $\vec{f}^{\mathrm{num}}$, the entropy variables $\vec{\omega}$, and the entropy flux $F$.
Tadmor's condition can be expressed as a purely algebraic condition summarized in the following lemma, where we still use the same notation as introduced earlier, but we do not assume any further relationship between the functions $\vec{\omega}$, $\vec{f}$, and $F$.

\begin{lemma}{\label{algebraic_lemma}}
    Let $Y \subset \mathbb{R}^n$ be open, and $\vec{\omega}\colon Y \to \mathbb{R}^n$, $\vec{f}\colon Y \to \mathbb{R}^n$, $F\colon Y \to \mathbb{R}$, and $\vec{f}^{\mathrm{num}}\colon Y \times Y \to \mathbb{R}^n$ satisfying
    \begin{equation}
    \forall \vec{u} \in Y\colon \quad \vec{f}^\mathrm{num}(\vec{u},\vec{u}) = \vec{f}(\vec{u})
    \end{equation}
    be given. Then,
    $\exists F^\mathrm{num}\colon Y \times Y \to \mathbb{R}$ satisfying $\forall \vec{u} \in Y\colon F^\mathrm{num}(\vec{u},\vec{u}) = F(\vec{u})$ and
    \begin{equation}
    \forall \vec{u}_-, \vec{u}_0, \vec{u}_+ \in Y\colon \quad \vec{\omega}(\vec{u}_0) \cdot \Bigl ( \vec{f}^\mathrm{num}(\vec{u}_0, \vec{u}_+) - \vec{f}^\mathrm{num}(\vec{u}_-, \vec{u}_0) \Bigr ) \geq F^\mathrm{num}(\vec{u}_0, \vec{u}_+) - F^\mathrm{num}(\vec{u}_-, \vec{u}_0)
    \label{sufficient_condition}
    \end{equation}
    if and only if
    \begin{equation}
    \forall \vec{u}_+, \vec{u}_- \in Y\colon \quad \left (\vec{\omega}_+ - \vec{\omega}_- \right ) \cdot \vec{f}^\mathrm{num}(\vec{u}_-, \vec{u}_+) \leq \left (\vec{\omega}_+ \cdot \vec{f}_+ - F_+ \right ) - \left (\vec{\omega}_- \cdot \vec{f}_- - F_- \right ).
    \label{necessary_condition}
    \end{equation}
    Moreover, if one of the conditions is satisfied with equality, then equality holds in the other condition as well, and $F^\mathrm{num}$ in~\eqref{sufficient_condition} is determined uniquely as
    \begin{equation}
    F^{\mathrm{num}} = \mean{F} + \mean{\vec{\omega}} \cdot \vec{f}^{\mathrm{num}}  -\mean{\vec{\omega} \cdot \vec{f}}.
    \label{F_num}
    \end{equation}
\end{lemma}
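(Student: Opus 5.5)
We prove both directions by working directly with the two-point quantity
\[
\Psi(\vec{u}_-,\vec{u}_+) := \vec{\omega}_- \cdot \vec{f}^\mathrm{num}(\vec{u}_-,\vec{u}_+) - F^\mathrm{num}(\vec{u}_-,\vec{u}_+) \quad\text{and}\quad \Phi(\vec{u}_-,\vec{u}_+) := \vec{\omega}_+ \cdot \vec{f}^\mathrm{num}(\vec{u}_-,\vec{u}_+) - F^\mathrm{num}(\vec{u}_-,\vec{u}_+).
\]
With this notation, the three-point condition~\eqref{sufficient_condition} reads $\Psi(\vec{u}_0,\vec{u}_+) \geq \Phi(\vec{u}_-,\vec{u}_0)$ for all $\vec{u}_-,\vec{u}_0,\vec{u}_+$. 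Consistency gives $\Psi(\vec{u},\vec{u}) = \Phi(\vec{u},\vec{u}) = \vec{\omega}\cdot\vec{f} - F =: \psi(\vec{u})$, the usual flux potential. No convexity or invertibility of $\vec{\omega}$ enters anywhere; the argument is purely algebraic.

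\textbf{Necessity.} Assume~\eqref{sufficient_condition}. First choose $\vec{u}_- = \vec{u}_0 = \vec{u}$ and $\vec{u}_+$ arbitrary. Consistency turns the left-hand term $\vec{f}^\mathrm{num}(\vec{u},\vec{u})$ into $\vec{f}(\vec{u})$, which gives $\Psi(\vec{u},\vec{u}_+) \geq \psi(\vec{u})$. Next choose $\vec{u}_0 = \vec{u}_+ = \vec{u}$, which gives $\psi(\vec{u}) \geq \Phi(\vec{u}_-,\vec{u})$. Renaming states, we obtain for every pair $(\vec{u}_-,\vec{u}_+)$
\[
\Psi(\vec{u}_-,\vec{u}_+) \geq \psi_- \quad\text{and}\quad \Phi(\vec{u}_-,\vec{u}_+) \leq \psi_+ .
\]
Subtracting the first inequality from the second eliminates the unknown $F^\mathrm{num}$:
\[
(\vec{\omega}_+-\vec{\omega}_-)\cdot\vec{f}^\mathrm{num} = \Phi - \Psi \leq \psi_+ - \psi_- ,
\]
which is exactly~\eqref{necessary_condition}. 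This elimination is the key observation of the whole proof.

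\textbf{Sufficiency.} Assume~\eqref{necessary_condition} and define $F^\mathrm{num}$ by~\eqref{F_num}. Consistency gives $F^\mathrm{num}(\vec{u},\vec{u}) = F + \vec{\omega}\cdot\vec{f} - \vec{\omega}\cdot\vec{f} = F$. A short computation with $\mean{\vec{\omega}} = \vec{\omega}_- + \tfrac12\jump{\vec{\omega}} = \vec{\omega}_+ - \tfrac12\jump{\vec{\omega}}$ yields
\[
\Psi = \mean{\psi} - \tfrac12 \jump{\vec{\omega}}\cdot\vec{f}^\mathrm{num}, \qquad \Phi = \mean{\psi} + \tfrac12\jump{\vec{\omega}}\cdot\vec{f}^\mathrm{num}.
\]
Combined with~\eqref{necessary_condition}, these identities give $\Psi(\vec{u}_-,\vec{u}_+) \geq \psi_-$ and $\Phi(\vec{u}_-,\vec{u}_+) \leq \psi_+$. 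Hence
\[
\Psi(\vec{u}_0,\vec{u}_+) \geq \psi_0 \geq \Phi(\vec{u}_-,\vec{u}_0),
\]
which is~\eqref{sufficient_condition}.

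\textbf{Equality case and uniqueness.} Every step above holds verbatim with $\geq$ and $\leq$ replaced by $=$, so equality in either condition implies equality in the other. For uniqueness, suppose~\eqref{sufficient_condition} holds with equality. The two specializations in the necessity step then become the identities $\Psi(\vec{u}_-,\vec{u}_+) = \psi_-$ and $\Phi(\vec{u}_-,\vec{u}_+) = \psi_+$. Adding them gives
\[
2\mean{\vec{\omega}}\cdot\vec{f}^\mathrm{num} - 2F^\mathrm{num} = \psi_- + \psi_+ ,
\]
which forces~\eqref{F_num}. The only real care needed is noticing that, in the inequality case, $F^\mathrm{num}$ is not unique but~\eqref{F_num} is always an admissible choice.
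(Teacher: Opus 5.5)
Your proof is correct and follows the paper's argument. Necessity comes from specializing the three-point condition at $\vec{u}_0=\vec{u}_-$ and $\vec{u}_0=\vec{u}_+$ and combining; uniqueness comes from combining the two resulting identities in the equality case; and sufficiency comes from defining $F^{\mathrm{num}}$ via~\eqref{F_num} and applying the two-point condition at each interface, where your $\Psi/\Phi$ notation is only a tidier bookkeeping of the paper's mean/jump splitting that makes explicit the decoupling $\Psi(\vec{u}_0,\vec{u}_+) \ge \psi_0 \ge \Phi(\vec{u}_-,\vec{u}_0)$.
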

\begin{proof}
We first show that~\eqref{sufficient_condition} implies~\eqref{necessary_condition}.
First, we choose  $\vec{u}_0 = \vec{u}_-$ in~\eqref{sufficient_condition}, resulting in
\begin{equation}
\vec{\omega}_- \cdot \Bigl (\vec{f}^{\mathrm{num}}(\vec{u}_-, \vec{u}_+) - \vec{f}_- \Bigr ) \geq F^{\mathrm{num}}(\vec{u}_-, \vec{u}_+) - F_-.
\label{eqminus}
\end{equation}
Choosing $\vec{u}_0 = \vec{u}_+$ in~\eqref{sufficient_condition} yields
\begin{equation}
\vec{\omega}_+ \cdot \Bigl (\vec{f}_+ - \vec{f}^{\mathrm{num}}(\vec{u}_-, \vec{u}_+) \Bigr ) \geq F_+ - F^{\mathrm{num}}(\vec{u}_-, \vec{u}_+).
\label{eqplus}
\end{equation}
Adding the two inequalities, we obtain
\begin{equation}
\left ( \vec{\omega}_- - \vec{\omega}_+ \right ) \cdot \vec{f}^{\mathrm{num}}(\vec{u}_-, \vec{u}_+) \geq \left (\vec{\omega}_- \cdot \vec{f}_- - F_- \right ) - \left (\vec{\omega}_+ \cdot \vec{f}_+ - F_+ \right ),
\end{equation}
which is equivalent to~\eqref{necessary_condition}.

If equality holds in~\eqref{sufficient_condition}, it holds in~\eqref{necessary_condition} as well.
In this case, we can subtract~\eqref{eqplus} from~\eqref{eqminus} (both with equality) to obtain
\begin{equation}
F^{\mathrm{num}}(\vec{u}_-, \vec{u}_+) = \frac{\vec{\omega}_+ + \vec{\omega}_-}{2} \cdot \vec{f}^{\mathrm{num}}(\vec{u}_-, \vec{u}_+) - \frac{\vec{\omega}_+ \cdot \vec{f}_+ + \vec{\omega}_- \cdot \vec{f}_-}{2} + \frac{F_+ + F_-}{2},
\end{equation}
i.e., $F^\mathrm{num}$ is determined uniquely by~\eqref{F_num}.

Finally, we show that~\eqref{necessary_condition} implies~\eqref{sufficient_condition}.
Abbreviating $\psi = \vec{\omega} \cdot \vec{f} - F$, \eqref{necessary_condition} is equivalent to
\begin{equation}
-\left (\vec{\omega}_+ - \vec{\omega}_- \right ) \cdot \vec{f}^\mathrm{num}(\vec{u}_-, \vec{u}_+) \ge - \left(\psi_+ - \psi_-\right).
\label{necessary_condition_psi}
\end{equation}
Following Tadmor, we compute
\begin{equation}
\begin{aligned}
    &\qquad
    \vec{\omega}(\vec{u}_0) \cdot \Bigl ( \vec{f}^\mathrm{num}(\vec{u}_0, \vec{u}_+) - \vec{f}^\mathrm{num}(\vec{u}_-, \vec{u}_0) \Bigr )
    \\
    &=
    \frac{\vec{\omega}_0 + \vec{\omega}_+}{2} \cdot \vec{f}^\mathrm{num}(\vec{u}_0, \vec{u}_+)
    - \frac{\vec{\omega}_+ - \vec{\omega}_0}{2} \cdot \vec{f}^\mathrm{num}(\vec{u}_0, \vec{u}_+)
    \\
    &\quad
    - \frac{\vec{\omega}_- + \vec{\omega}_0}{2} \cdot \vec{f}^\mathrm{num}(\vec{u}_-, \vec{u}_0)
    - \frac{\vec{\omega}_0 - \vec{\omega}_-}{2} \cdot \vec{f}^\mathrm{num}(\vec{u}_-, \vec{u}_0)
    \\
    &\stackrel{\eqref{necessary_condition_psi}}{\ge}
    \frac{\vec{\omega}_0 + \vec{\omega}_+}{2} \cdot \vec{f}^\mathrm{num}(\vec{u}_0, \vec{u}_+)
    - \frac{\psi_+ - \psi_0}{2}
    - \frac{\vec{\omega}_- + \vec{\omega}_0}{2} \cdot \vec{f}^\mathrm{num}(\vec{u}_-, \vec{u}_0)
    - \frac{\psi_0 - \psi_-}{2}
    \\
    &=
    \frac{\vec{\omega}_0 + \vec{\omega}_+}{2} \cdot \vec{f}^\mathrm{num}(\vec{u}_0, \vec{u}_+)
    - \frac{\psi_+ + \psi_0}{2}
    - \frac{\vec{\omega}_- + \vec{\omega}_0}{2} \cdot \vec{f}^\mathrm{num}(\vec{u}_-, \vec{u}_0)
    + \frac{\psi_0 + \psi_-}{2}
    \\
    &=
    F^{\mathrm{num}}(\vec{u}_0, \vec{u}_+) - F^{\mathrm{num}}(\vec{u}_-, \vec{u}_0),
\end{aligned}
\end{equation}
where we have used the definition of $F^{\mathrm{num}}$ in~\eqref{F_num} in the last step.
\end{proof}

\begin{remark}
    The condition~\eqref{necessary_condition} (with equality) is often referred to as the Tadmor ``shuffle condition'' in the literature, and it is often used as a sufficient condition for entropy conservation/stability.
    However, it is also a necessary condition, and thus, it provides a complete characterization of entropy-preserving methods.
    Moreover, \eqref{F_num} determines the numerical entropy flux of an EC scheme.
    In particular, a numerical entropy flux such as $\mean{\vec{\omega}} \cdot \vec{f}^{\mathrm{num}}  - \psi(\mean{\vec{\omega}})$ \cite[Eq.~(33)]{abgrall2018general}, where $\psi = \vec{\omega} \cdot \vec{f} - F$, cannot be used in this setting.
\end{remark}

\begin{remark}
    The importance of the general Tadmor shuffle condition \eqref{necessary_condition} cannot be overstated, since it can be used for any functional.
    For example, it can be used for the derivation of pressure equilibrium preserving methods.
    Indeed, the condition to preserve pressure equilibria used in \cite{terashima2025approximately,fujiwara2023fully} reduced to a single species is
    \begin{equation}
        \left. \frac{\partial \varrho e}{\partial \varrho} \right|_{p, \bm{u}_i}
        ( \varrho^\mathrm{num}_{i + 1/2} - \varrho^\mathrm{num}_{i - 1/2} )
        =
        (\varrho e)^\mathrm{num}_{i + 1/2} - (\varrho e)^\mathrm{num}_{i - 1/2},
    \label{eq:pe_condition}
    \end{equation}
    where $\varrho^\mathrm{num}_{i \pm 1/2}$ and $(\varrho e)^\mathrm{num}_{i \pm 1/2}$ are consistent approximations (``numerical fluxes'') of the density and the internal energy density at the $i \pm 1/2$ interface, respectively.
    Adapted to this notation, Terashima et al.~\cite{terashima2025approximately} write
    \begin{quote}
        However, as recognized in Eq.~\eqref{eq:pe_condition}, the
        asymmetry associated with the partial derivative term
        $\left. \frac{\partial \varrho e}{\partial \varrho} \right|_{p, \bm{u}_i}$
        defined at $i$ means that no locally-defined half-point values
        satisfy Eq.~\eqref{eq:pe_condition} for all cells.
    \end{quote}
    However, this is not true since \eqref{eq:pe_condition} is of the general form \eqref{sufficient_condition} with $\vec{\omega} = \partial \varrho e / \partial \vec{u}$, $\vec{f}^\mathrm{num} = \varrho^\mathrm{num}$, and $F^\mathrm{num} = \varrho e^\mathrm{num}$.
    Thus, it can be reduced to the local condition \eqref{necessary_condition}.
    We will make use of this in an upcoming work.
\end{remark}

The following theorem captures the essential structure of Tadmor's condition while removing the assumption of entropy convexity.
Indeed, at a purely algebraic level, the invertibility of the mapping between conserved and entropy variables (which is guaranteed by the convexity of the entropy function) is not required.
Consequently, the same condition can also be used to ensure the conservation of non-convex functionals.

\begin{theorem}[Tadmor]{\label{tadmor_theorem}}
The FV scheme
\begin{equation}
\tag{\ref{FV_conservation}}
	\partial_t \vec{u}_i + \frac{1}{\Delta x_i} \Bigl ( \vec{f}^{\mathrm{num}}\bigl ( \vec{u}_{i}, \vec{u}_{i+1} \bigr )  - \vec{f}^{\mathrm{num}}\bigl ( \vec{u}_{i-1}, \vec{u}_{i} \bigr ) \Bigr ) = \vec{0}
\end{equation}
is entropy-conservative/stable for a given entropy pair $(U, F)$ (see Definition~\ref{def_entropy_conservation}) if and only if
\\
\begin{equation}
\label{TadmorDef}
	\jump{\vec{\omega}} \cdot \vec{f}^\mathrm{num} \leq \jump{\psi},
\end{equation}
where $\vec{\omega} = U'$ are the entropy variables and $\psi = \vec{\omega} \cdot \vec{f} - F$ is the entropy potential.
\end{theorem}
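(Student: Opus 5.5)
The plan is to reduce Theorem~\ref{tadmor_theorem} directly to Lemma~\ref{algebraic_lemma}. First, substitute the scheme \eqref{FV_conservation} into Definition~\ref{def_entropy_conservation}. Multiplying $\partial_t \vec{u}_i$ by $\vec{\omega}_i$ and by $\Delta x_i > 0$ shows that entropy conservation/stability means the following: there exists a consistent $F^{\mathrm{num}}$ such that
\begin{equation*}
\vec{\omega}_i \cdot \Bigl( \vec{f}^{\mathrm{num}}(\vec{u}_i,\vec{u}_{i+1}) - \vec{f}^{\mathrm{num}}(\vec{u}_{i-1},\vec{u}_i) \Bigr) \geq F^{\mathrm{num}}(\vec{u}_i,\vec{u}_{i+1}) - F^{\mathrm{num}}(\vec{u}_{i-1},\vec{u}_i).
\end{equation*}
In the conservative case this holds with equality.

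Next, I have to read the definition as a property of the scheme. It must hold for all admissible grid states, so for every $i$ and all values $\vec{u}_{i-1},\vec{u}_i,\vec{u}_{i+1}\in Y$. Since the grid values can be chosen independently, this is exactly condition \eqref{sufficient_condition} with $\vec{u}_- = \vec{u}_{i-1}$, $\vec{u}_0 = \vec{u}_i$, $\vec{u}_+ = \vec{u}_{i+1}$. Conversely, \eqref{sufficient_condition} applied to each triple of neighboring cells gives the semi-discrete inequality of the definition. I expect this identification to be the only delicate point. One must state that the quantifier ranges over all states in $Y$, and in particular over configurations with repeated values such as $\vec{u}_0=\vec{u}_\pm$. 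The necessity argument of Lemma~\ref{algebraic_lemma} uses exactly those configurations. These configurations are realizable on any grid with at least two cells, or with periodic data.

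Then apply Lemma~\ref{algebraic_lemma} with $\vec{\omega}=U'$, the physical flux $\vec{f}$, the entropy flux $F$, and the consistent numerical flux $\vec{f}^{\mathrm{num}}$. The lemma makes no assumptions on these beyond consistency, so no convexity of $U$ is needed. It states that \eqref{sufficient_condition} is equivalent to \eqref{necessary_condition}. With $\psi = \vec{\omega}\cdot\vec{f}-F$ and the jump notation, \eqref{necessary_condition} reads $\jump{\vec{\omega}}\cdot\vec{f}^{\mathrm{num}} \leq \jump{\psi}$, which is \eqref{TadmorDef}.

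Finally, the equality statement of the lemma gives the entropy-conservative case: equality in \eqref{TadmorDef} holds if and only if the scheme is entropy conservative. In that case the lemma also says that the numerical entropy flux is necessarily given by \eqref{F_num}. I will mention this as a byproduct, together with the check that \eqref{F_num} is consistent, since $\vec{f}^{\mathrm{num}}(\vec{u},\vec{u})=\vec{f}(\vec{u})$ gives $F^{\mathrm{num}}(\vec{u},\vec{u})=F(\vec{u})$.
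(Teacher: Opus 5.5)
Your proposal is correct and matches the paper's proof: substitute the scheme into Definition~\ref{def_entropy_conservation}, multiply by $\Delta x_i$, and invoke Lemma~\ref{algebraic_lemma} with $\vec{\omega}=U'$. You also make explicit what the paper leaves implicit, namely that the definition must be read as quantified over all triples of states. One small correction to your realizability aside: the required configurations need a cell with two independently chosen neighbors, i.e.\ at least three cells (also in the periodic case), not two.
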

\begin{proof}
According to Definition~\ref{def_entropy_conservation}, entropy conservation/stability means
\begin{equation}
U'(\vec{u}_i) \cdot \partial_t \vec{u}_i = -\frac{\vec{\omega}(\vec{u}_i)}{\Delta x_i}\cdot  \Bigl ( \vec{f}^\mathrm{num}_{i+1/2} - \vec{f}^\mathrm{num}_{i-1/2} \bigg ) \leq -\frac{1}{\Delta x_i} \Bigl ( F^{\mathrm{num}}_{i+1/2} - F^{\mathrm{num}}_{i-1/2} \Bigr ),
\end{equation}
which simply boils down to
\begin{equation}
\vec{\omega}(\vec{u}_i) \cdot \bigg ( \vec{f}^\mathrm{num}_{i+1/2} - \vec{f}^\mathrm{num}_{i-1/2} \bigg ) \geq F^{\mathrm{num}}_{i+1/2} - F^{\mathrm{num}}_{i-1/2}.
\end{equation}
Thus, applying Lemma~\ref{algebraic_lemma} yields the desired result.
\end{proof}
\section{Nonconservative hyperbolic systems}
\label{sec:nonconservative_systems}
In this section, we first show that the entropy conservation condition for nonconservative systems~\eqref{CastroEC}, proposed by Castro et al.\ in~\cite[Theorem~2.1]{Castro2013}, is also necessary, generalizing to non-convex entropies.
Then, we propose four concrete semi-discretizations of the nonconservative terms $\vec{H}(\vec{u})\partial_x \vec{g}(\vec{u})$, composed of jumps and means of $\vec{H}$ and $\vec{g}$, and characterize their entropy-preserving properties.

In this section, we consider nonconservative hyperbolic systems of the form
\begin{equation}
\tag{\ref{nonconservative_system}}
    \partial_t \vec{u} + \partial_x \vec{f}(\vec{u}) + \vec{H}(\vec{u}) \partial_x \vec{g}(\vec{u}) = \vec{0},
\end{equation}
where $\vec{u}(x,t) \in Y \subset \mathbb{R}^n$ is the state vector, $\vec{f}\colon Y \to \mathbb{R}^n$ is the flux function, $\vec{H}\colon Y \to \mathbb{R}^{n \times m}$ is the nonconservative matrix, and $\vec{g}\colon Y \to \mathbb{R}^m$, paired with appropriate initial and boundary conditions.
We underline that here $n$ is the number of unknowns and $m$ accounts for the number of nonconservative terms.
As for the conservative case, we assume the existence of an entropy function $U\colon Y \to \mathbb{R}$ with associated entropy flux $F\colon Y \to \mathbb{R}$ satisfying
\begin{equation}\label{compatibility_condition_nc}
\forall \vec{u} \in Y\colon \quad U'(\vec{u}) \cdot \vec{f}'(\vec{u}) + U'(\vec{u}) \cdot \vec{H}(\vec{u}) \vec{g}'(\vec{u}) = F'(\vec{u}).
\end{equation}
Thus, smooth solutions of~\eqref{nonconservative_system} satisfy the additional conservation law
$\partial_t U(\vec{u}) + \partial_x F(\vec{u}) = 0$,
and weak solutions should satisfy the entropy inequality
$\partial_t U(\vec{u}) + \partial_x F(\vec{u}) \leq 0$.

\subsection{An algebraic characterization of the fluctuation form}

Next, we formulate the condition~\eqref{CastroEC} of Castro et al.\ \cite{Castro2013} as a purely algebraic condition, similarly to what we have just presented in Section~\ref{tadmor_non_convex_entropies} for the conservative case.
Note that we do not assume any further relationship between the functions $\vec{D}^\pm$, $\vec{\omega}$, and $F$.
\begin{lemma}{\label{algebraic_lemma_fluctuation}}
    Let $Y \subset \mathbb{R}^n$ be open, and $\vec{\omega}\colon Y \to \mathbb{R}^n$, $F\colon Y \to \mathbb{R}$, and $\vec{D}^{\pm}\colon Y \times Y \to \mathbb{R}^n$ satisfying
    \begin{equation}
    \forall \vec{u} \in Y\colon \quad \vec{D}^{\pm}(\vec{u},\vec{u}) = \vec{0}
    \end{equation}
    be given. Then,
    $\exists F^\mathrm{num}\colon Y \times Y \to \mathbb{R}$ satisfying $\forall \vec{u} \in Y\colon F^\mathrm{num}(\vec{u},\vec{u}) = F(\vec{u})$ and
    \begin{equation}
    \forall \vec{u}_-, \vec{u}_0, \vec{u}_+ \in Y\colon \quad \vec{\omega}(\vec{u}_0) \cdot \Bigl ( \vec{D}^+(\vec{u}_-, \vec{u}_0) + \vec{D}^{-}(\vec{u}_0, \vec{u}_+) \Bigr ) \geq F^\mathrm{num}(\vec{u}_0, \vec{u}_+) - F^\mathrm{num}(\vec{u}_-, \vec{u}_0)
    \label{sufficient_condition_fluctuation}
    \end{equation}
    if and only if
    \begin{equation}
    \forall \vec{u}_+, \vec{u}_- \in Y\colon \quad \vec{\omega}(\vec{u}_+) \cdot \vec{D}^+(\vec{u}_-, \vec{u}_+) + \vec{\omega}(\vec{u}_-) \cdot \vec{D}^-(\vec{u}_-, \vec{u}_+) \geq  F(\vec{u}_+) - F(\vec{u}_-).
    \label{necessary_condition_fluctuation}
    \end{equation}
    Moreover, if one of the conditions is satisfied with equality, then equality holds in the other condition as well, and $F^\mathrm{num}$ in~\eqref{sufficient_condition_fluctuation} is determined uniquely as
    \begin{equation}
    F^{\mathrm{num}}(\vec{u}_-, \vec{u}_+) = \frac{F(\vec{u}_+) + F(\vec{u}_-)}{2} + \frac{\vec{\omega}(\vec{u}_-)}{2} \cdot \vec{D}^-(\vec{u}_-, \vec{u}_+) - \frac{\vec{\omega}(\vec{u}_+)}{2} \cdot \vec{D}^+(\vec{u}_-,\vec{u}_+).
    \label{F_num_fluctuation}
    \end{equation}
\end{lemma}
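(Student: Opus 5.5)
The proof will mirror that of Lemma~\ref{algebraic_lemma}, with the fluctuations $\vec{D}^\pm$ taking the role of the flux differences. Throughout, only the consistency $\vec{D}^\pm(\vec{u},\vec{u}) = \vec{0}$ and $F^\mathrm{num}(\vec{u},\vec{u}) = F(\vec{u})$ will be used; no relation between $\vec{\omega}$, $F$ and $\vec{D}^\pm$ is needed.

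\emph{Necessity.} Assume \eqref{sufficient_condition_fluctuation}. First choose $\vec{u}_0 = \vec{u}_-$. Since $\vec{D}^+(\vec{u}_-,\vec{u}_-) = \vec{0}$ and $F^\mathrm{num}(\vec{u}_-,\vec{u}_-) = F_-$, this gives
\begin{equation*}
\vec{\omega}_- \cdot \vec{D}^-(\vec{u}_-,\vec{u}_+) \ge F^\mathrm{num}(\vec{u}_-,\vec{u}_+) - F_- .
\end{equation*}
Next rename the outer arguments so that the relevant interface is $(\vec{u}_-,\vec{u}_+)$ and set the third point equal to $\vec{u}_+$. This yields
\begin{equation*}
\vec{\omega}_+ \cdot \vec{D}^+(\vec{u}_-,\vec{u}_+) \ge F_+ - F^\mathrm{num}(\vec{u}_-,\vec{u}_+).
\end{equation*}
Adding the two inequalities eliminates $F^\mathrm{num}$ and gives exactly \eqref{necessary_condition_fluctuation}. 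If \eqref{sufficient_condition_fluctuation} holds with equality, both partial inequalities are equalities. Subtracting the second from the first and dividing by two then forces $F^\mathrm{num}$ to be \eqref{F_num_fluctuation}, which proves uniqueness.

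\emph{Sufficiency.} Define $F^\mathrm{num}$ by \eqref{F_num_fluctuation}. Consistency is immediate because $\vec{D}^\pm(\vec{u},\vec{u}) = \vec{0}$. To show \eqref{sufficient_condition_fluctuation}, split $F^\mathrm{num}(\vec{u}_0,\vec{u}_+) - F^\mathrm{num}(\vec{u}_-,\vec{u}_0)$ using the formula:
\begin{equation*}
\tfrac{F_+ - F_-}{2} + \tfrac{\vec{\omega}_0}{2}\cdot \vec{D}^-(\vec{u}_0,\vec{u}_+) - \tfrac{\vec{\omega}_+}{2}\cdot\vec{D}^+(\vec{u}_0,\vec{u}_+) - \tfrac{\vec{\omega}_-}{2}\cdot\vec{D}^-(\vec{u}_-,\vec{u}_0) + \tfrac{\vec{\omega}_0}{2}\cdot\vec{D}^+(\vec{u}_-,\vec{u}_0).
\end{equation*}
The target left-hand side is $\vec{\omega}_0\cdot(\vec{D}^+(\vec{u}_-,\vec{u}_0) + \vec{D}^-(\vec{u}_0,\vec{u}_+))$. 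Subtracting the expression above from it leaves
\begin{equation*}
\tfrac12\bigl[\vec{\omega}_0\cdot\vec{D}^-(\vec{u}_0,\vec{u}_+) + \vec{\omega}_+\cdot\vec{D}^+(\vec{u}_0,\vec{u}_+) - (F_+ - F_0)\bigr] + \tfrac12\bigl[\vec{\omega}_-\cdot\vec{D}^-(\vec{u}_-,\vec{u}_0) + \vec{\omega}_0\cdot\vec{D}^+(\vec{u}_-,\vec{u}_0) - (F_0 - F_-)\bigr].
\end{equation*}
Here $F_+ - F_-$ has been rewritten as $(F_+ - F_0) + (F_0 - F_-)$. Each bracket is nonnegative by \eqref{necessary_condition_fluctuation}, applied on the interfaces $(\vec{u}_0,\vec{u}_+)$ and $(\vec{u}_-,\vec{u}_0)$ respectively. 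This gives \eqref{sufficient_condition_fluctuation}, with equality whenever \eqref{necessary_condition_fluctuation} is an equality.

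The only delicate step is this bookkeeping: one must split $F_+ - F_-$ through $F_0$ so that the terms regroup into exactly the two interface brackets.
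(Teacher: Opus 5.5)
Your proof is correct and follows the paper's argument. For necessity and uniqueness, you specialize the three-point inequality at $\vec{u}_0=\vec{u}_-$ and $\vec{u}_0=\vec{u}_+$ and then add the two results (or subtract them in the equality case). For sufficiency, you define $F^\mathrm{num}$ by \eqref{F_num_fluctuation}, split $F_+-F_-$ through $F_0$, and apply the two-point condition on each interface. This is only a rearrangement of the paper's halving of $\vec{\omega}_0\cdot\vec{D}^\pm$, and you also explicitly check $F^\mathrm{num}(\vec{u},\vec{u})=F(\vec{u})$, which the paper leaves implicit.
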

\begin{proof}
We first show that~\eqref{sufficient_condition_fluctuation} implies~\eqref{necessary_condition_fluctuation}.
First, we choose $\vec{u}_0 = \vec{u}_-$ in~\eqref{sufficient_condition_fluctuation}, resulting in
\begin{equation}
\vec{\omega}_- \cdot \Bigl (\underbrace{\vec{D}^+(\vec{u}_-, \vec{u}_-)}_{= \vec{0}} + \vec{D}^-(\vec{u}_-, \vec{u}_+)\Bigr ) \geq F^{\mathrm{num}}(\vec{u}_-, \vec{u}_+) - F_-.
\label{eqminus_fluctuation}
\end{equation}
Choosing $\vec{u}_0 = \vec{u}_+$ in~\eqref{sufficient_condition_fluctuation} yields
\begin{equation}
\vec{\omega}_+ \cdot \Bigl (\vec{D}^+(\vec{u}_-, \vec{u}_+) + \underbrace{\vec{D}^-(\vec{u}_+, \vec{u}_+)}_{= \vec{0}} \Bigr ) \geq F_+ - F^{\mathrm{num}}(\vec{u}_-, \vec{u}_+).
\label{eqplus_fluctuation}
\end{equation}
Adding the two inequalities, we obtain~\eqref{necessary_condition_fluctuation}.

If equality holds in~\eqref{sufficient_condition_fluctuation}, it holds in~\eqref{necessary_condition_fluctuation} as well.
In this case, we can subtract~\eqref{eqplus_fluctuation} from~\eqref{eqminus_fluctuation} (both with equality) to obtain~\eqref{F_num_fluctuation}.

Finally, we show that~\eqref{necessary_condition_fluctuation} implies~\eqref{sufficient_condition_fluctuation}.
Similar to \cite{Castro2013}, we compute
\begin{equation}
    \begin{aligned}
        &\qquad
\vec{\omega}(\vec{u}_0) \cdot \Bigl ( \vec{D}^+(\vec{u}_-, \vec{u}_0) + \vec{D}^{-}(\vec{u}_0, \vec{u}_+) \Bigr )
\\
&=
\frac{\vec{\omega}_0}{2} \cdot \vec{D}^+(\vec{u}_-, \vec{u}_0) + \frac{\vec{\omega}_0}{2} \cdot \vec{D}^+(\vec{u}_-, \vec{u}_0)
+ \frac{\vec{\omega}_0}{2} \cdot \vec{D}^{-}(\vec{u}_0, \vec{u}_+) + \frac{\vec{\omega}_0}{2} \cdot \vec{D}^{-}(\vec{u}_0, \vec{u}_+)\\
&\stackrel{\eqref{necessary_condition_fluctuation}}{\ge}
\frac{\vec{\omega}_0}{2} \cdot \vec{D}^+(\vec{u}_-, \vec{u}_0) + \frac{F_0 - F_-}{2} - \frac{\vec{\omega}_-}{2} \cdot \vec{D}^-(\vec{u}_-,\vec{u}_0)
\\
&\quad
+ \frac{\vec{\omega}_0}{2} \cdot \vec{D}^{-}(\vec{u}_0, \vec{u}_+) + \frac{F_+ - F_0}{2} - \frac{\vec{\omega}_+}{2} \cdot \vec{D}^+(\vec{u}_0,\vec{u}_+)
\\
&=
\frac{\vec{\omega}_0}{2} \cdot \vec{D}^+(\vec{u}_-, \vec{u}_0) - \frac{F_0 + F_-}{2} - \frac{\vec{\omega}_-}{2} \cdot \vec{D}^-(\vec{u}_-,\vec{u}_0)
\\
&\quad
+ \frac{\vec{\omega}_0}{2} \cdot \vec{D}^{-}(\vec{u}_0, \vec{u}_+) + \frac{F_+ + F_0}{2} - \frac{\vec{\omega}_+}{2} \cdot \vec{D}^+(\vec{u}_0,\vec{u}_+)
\\
&= F^{\mathrm{num}}(\vec{u}_0, \vec{u}_+) - F^{\mathrm{num}}(\vec{u}_-, \vec{u}_0)
    \end{aligned}
\end{equation}
where we have used the definition of $F^{\mathrm{num}}$ in~\eqref{F_num_fluctuation} in the last step.
\end{proof}

Given this algebraic characterization, the necessary and sufficient condition for the FV scheme in fluctuation form follows immediately, cf.~\cite{Castro2013}.
\begin{theorem}{\label{fluctuation_theorem}}
The FV scheme in fluctuation form
\begin{equation}
\tag{\ref{FV_fluctuations}}
	\partial_t \vec{u}_i
    + \frac{1}{\Delta x_i} \Bigl (
        \vec{D}^+( \vec{u}_{i-1}, \vec{u}_{i} ) +
        \vec{D}^-( \vec{u}_{i}, \vec{u}_{i+1} )
    \Bigr ) = \vec{0}
\end{equation}
is entropy-conservative/stable for a given entropy pair $(U, F)$ (see Definition~\ref{def_entropy_conservation}) if and only if
\begin{equation}
\label{FluctuationDef}
 \vec{\omega}(\vec{u}_+) \cdot \vec{D}^+(\vec{u}_-, \vec{u}_+) + \vec{\omega}(\vec{u}_-) \cdot \vec{D}^-(\vec{u}_-, \vec{u}_+) \geq  \jump{F}
\end{equation}
where $\vec{\omega} = U'$ are the entropy variables.
\end{theorem}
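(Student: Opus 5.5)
The plan mirrors the proof of Theorem~\ref{tadmor_theorem}: rewrite the entropy-preservation requirement of Definition~\ref{def_entropy_conservation} for the scheme~\eqref{FV_fluctuations} as a three-point algebraic inequality, and then apply Lemma~\ref{algebraic_lemma_fluctuation}.

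First, contract~\eqref{FV_fluctuations} with $\vec{\omega}_i = U'(\vec{u}_i)$. This gives
\begin{equation*}
U'(\vec{u}_i) \cdot \partial_t \vec{u}_i = -\frac{\vec{\omega}(\vec{u}_i)}{\Delta x_i} \cdot \Bigl( \vec{D}^+(\vec{u}_{i-1}, \vec{u}_i) + \vec{D}^-(\vec{u}_i, \vec{u}_{i+1}) \Bigr).
\end{equation*}
Definition~\ref{def_entropy_conservation}, adapted verbatim to the fluctuation scheme, asks for some consistent $F^\mathrm{num}$ such that this quantity is bounded above by $-\frac{1}{\Delta x_i}\bigl(F^\mathrm{num}(\vec{u}_i,\vec{u}_{i+1}) - F^\mathrm{num}(\vec{u}_{i-1},\vec{u}_i)\bigr)$. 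Multiplying by $-\Delta x_i < 0$ reverses the inequality. The condition is then exactly~\eqref{sufficient_condition_fluctuation}, evaluated at $(\vec{u}_-,\vec{u}_0,\vec{u}_+) = (\vec{u}_{i-1},\vec{u}_i,\vec{u}_{i+1})$.

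The only point needing a word of care is the quantifier. The scheme is required to be entropy-conservative/stable for arbitrary grid data, and any triple of states in $Y$ can occur as neighboring cell values. Hence the requirement holding for all cells and all solutions is equivalent to~\eqref{sufficient_condition_fluctuation} holding for all $\vec{u}_-, \vec{u}_0, \vec{u}_+ \in Y$. We also assume the consistency $\vec{D}^\pm(\vec{u},\vec{u}) = \vec{0}$ stated after~\eqref{FV_fluctuations}, which is the hypothesis of the lemma.

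Lemma~\ref{algebraic_lemma_fluctuation} then states that~\eqref{sufficient_condition_fluctuation} holds for some consistent $F^\mathrm{num}$ if and only if~\eqref{necessary_condition_fluctuation} holds, and the latter is precisely~\eqref{FluctuationDef} written with $\jump{F} = F(\vec{u}_+) - F(\vec{u}_-)$. The equality case of the lemma yields the entropy-conservative statement, with $F^\mathrm{num}$ given uniquely by~\eqref{F_num_fluctuation}.

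There is no real obstacle here, since all the work was done in the lemma. The only step worth spelling out is the sign flip and the identification of the per-cell condition with the universally quantified three-point condition.
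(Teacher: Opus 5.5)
Your proposal is correct and follows the paper's own argument: the paper's proof of Theorem~\ref{fluctuation_theorem} is just ``apply Lemma~\ref{algebraic_lemma_fluctuation}.'' That is your route of reducing Definition~\ref{def_entropy_conservation} to the three-point inequality~\eqref{sufficient_condition_fluctuation}, exactly as in the proof of Theorem~\ref{tadmor_theorem}. You additionally spell out the sign flip and the universal quantification over neighboring states, which the paper leaves implicit.
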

\begin{proof}
The proof follows by applying Lemma~\ref{algebraic_lemma_fluctuation}.
\end{proof}

For us, the condition~\eqref{FluctuationDef} is too abstract to be useful for deriving entropy-preserving numerical schemes.
Thus, we focus on concrete forms of the fluctuations in the following.

\subsection{Specialized semi-discretizations for nonconservative systems}

The simplest discretization of the nonconservative term is the second-order central scheme
\begin{equation}
\vec{H}(\vec{u}) \partial_x \vec{g}(\vec{u})\Big |_{x = x_i} \approx \vec{H}_i \frac{\vec{g}_{i+1} - \vec{g}_{i-1}}{2 \Delta x_i}.
\label{FV_form0}
\end{equation}
This can be written more conveniently for the following analysis as
\begin{equation}
\vec{H}(\vec{u}) \partial_x \vec{g}(\vec{u})\Big |_{x = x_i} \approx \vec{H}_i \frac{\jump{\vec{g}}_{i+1/2} +\jump{\vec{g}}_{i-1/2}}{2 \Delta x_i},
\label{FV_form1}
\end{equation}
where $\jump{\vec{g}}_{i+1/2} = \vec{g}_{i+1} - \vec{g}_i$.
A second form can be obtained by writing the central discretization as
\begin{equation*}
    \vec{H}_i \frac{\vec{g}_{i+1} - \vec{g}_{i-1}}{2 \Delta x_i}
    =
    \vec{H}_i \frac{\mean{\vec{g}}_{i+1/2} - \mean{\vec{g}}_{i-1/2}}{ \Delta x_i}.
\end{equation*}
Generalizing the arithmetic mean to a numerical flux $g^\mathrm{num}$, we obtain the second form
\begin{equation}
\vec{H}(\vec{u}) \partial_x \vec{g}(\vec{u})\Big |_{x = x_i} \approx \vec{H}_i \frac{\vec{g}_{i+1/2}^{\mathrm{num}} - \vec{g}^{\mathrm{num}}_{i-1/2}}{\Delta x_i}.
\label{FV_form2}
\end{equation}

A third form can be obtained by interpreting the fraction in~\eqref{FV_form1} as an average of the discrete gradients at the left and right interface. Introducing a numerical flux $\vec{H}^{\mathrm{num}}$ at these interfaces, we obtain the third form
\begin{equation}
\vec{H}(\vec{u}) \partial_x \vec{g}(\vec{u})\Big |_{x = x_i} \approx \frac{\vec{H}^{\mathrm{num}}_{i+1/2} \jump{\vec{g}}_{i+1/2} + \vec{H}^{\mathrm{num}}_{i-1/2} \jump{\vec{g}}_{i-1/2}}{2 \Delta x_i}.
\label{FV_form3}
\end{equation}
Finally, we note that
\begin{equation*}
\frac{\vec{H}^{\mathrm{num}}_{i+1/2} \jump{\vec{g}}_{i+1/2} + \vec{H}^{\mathrm{num}}_{i-1/2} \jump{\vec{g}}_{i-1/2}}{2 \Delta x_i}
=
\frac{\vec{H}^{\mathrm{num}}_{i+1/2} \mean{\vec{g}}_{i+1/2} - \vec{H}^{\mathrm{num}}_{i-1/2} \mean{\vec{g}}_{i-1/2}}{\Delta x_i}
- \frac{\vec{H}^{\mathrm{num}}_{i+1/2} - \vec{H}^{\mathrm{num}}_{i-1/2}}{\Delta x_i} \vec{g}_i.
\end{equation*}
Thus, substituting the arithmetic mean $\mean{\vec{g}}$ with a generic two-point numerical flux and considering the single generic numerical flux of $\left (\vec{H} \vec{g} \right )^{\mathrm{num}}$, we obtain the fourth form
\begin{equation}
\vec{H}(\vec{u}) \partial_x \vec{g}(\vec{u})\Big |_{x = x_i}
\approx
\frac{\left (\vec{H} \vec{g} \right ) ^{\mathrm{num}}_{i+1/2} - \left (\vec{H} \vec{g} \right ) ^{\mathrm{num}}_{i-1/2}}{\Delta x_i}
- \frac{\vec{H}^{\mathrm{num}}_{i+1/2} - \vec{H}^{\mathrm{num}}_{i-1/2}}{\Delta x_i} \vec{g}_i.
\label{FV_form4}
\end{equation}
This last form is a generalization of the flux differencing form used by Waruszewski et al.~\cite{WARUSZEWSKI2022111507}, where they used the particular form $\vec{H}^{\mathrm{num}} \vec{g}^{\mathrm{num}} - \vec{H}^{\mathrm{num}} \vec{g}_i$ (at the right interface).
The form~\eqref{FV_form4} is particularly interesting because it allows a lot of flexibility; choosing $(\vec{H} \vec{g})^{\mathrm{num}}$ and $\vec{H}^{\mathrm{num}}$ as
\begin{equation}
(\vec{H} \vec{g})^{\mathrm{num}} = \prodmean{\vec{H} \cdot \vec{g}} := \frac{1}{2} \left ( \vec{H}_- \cdot \vec{g}_+ + \vec{H}_+ \cdot \vec{g}_- \right ),
\quad \vec{H}^{\mathrm{num}} = \mean{\vec{H}},
\end{equation}
we have that
\begin{equation}
\begin{aligned}
    \prodmean{\vec{H} \cdot \vec{g}}_{i+1/2} - \mean{\vec{H}}_{i+1/2} \vec{g}_i
    &=
    \frac{1}{2} \left ( \vec{H}_i \cdot \vec{g}_{i+1} + \vec{H}_{i+1} \cdot \vec{g}_i \right ) - \frac{\vec{H}_i + \vec{H}_{i+1}}{2} \cdot \vec{g}_i
    = \vec{H}_i \frac{\jump{\vec{g}}_{i+1/2}}{2}.
\end{aligned}
\end{equation}
Thus, we recover the first form~\eqref{FV_form1}.
Choosing
\begin{equation}
    (\vec{H} \vec{g})^{\mathrm{num}} = \vec{H}^\mathrm{num} \mean{\vec{g}},
    \quad \vec{H}^{\mathrm{num}} = \mean{\vec{H}},
\end{equation}
we obtain the third form~\eqref{FV_form3}.
\begin{remark}
Note that Definition~\ref{def_entropy_conservation} of entropy conservation/stability remains valid analogously for all the forms~\eqref{FV_form1}, \eqref{FV_form2}, \eqref{FV_form3}, and \eqref{FV_form4}.
\end{remark}

Next, we extend the algebraic characterization of Tadmor's condition discussed in Section~\ref{tadmor_non_convex_entropies} to the four semi-discretizations presented above.
For clarity, Table~\ref{table_formulation} summarizes the four forms of the semi-discretization of nonconservative product along with the associated entropy conservation/stability condition, as established by the lemmas presented below.

\begin{table}[htbp]
    \sisetup{
  output-exponent-marker=\text{e},
  round-mode=places,
  round-precision=2
}
\centering
  \caption{Semi-discretizations of nonconservative systems~\eqref{nonconservative_system} and associated entropy conservation/stability conditions.}
  \label{table_formulation}
  \centering
  \footnotesize
    \begin{tabular}{c ll}
      \toprule
      \multicolumn{1}{c}{Lem.}
      & \multicolumn{1}{c}{Semi-discretization}
      & \multicolumn{1}{c}{Entropy condition}\\
      \midrule
\ref{algebraic_lemma_nc_form1}
& $\vec{H}_i\frac{\jump{\vec{g}}_{i+1/2} + \jump{\vec{g}}_{i-1/2}}{2 \Delta x_i}$
& $\jump{\vec{\omega}} \cdot \vec{f}^\mathrm{num} - \mean{\vec{\omega} \cdot \vec{H}} \jump{\vec{g}} \leq \jump{\psi}$
\\
\ref{algebraic_lemma_nc_form2}
& $\vec{H}_i \frac{\vec{g}_{i+1/2}^{\mathrm{num}} - \vec{g}^{\mathrm{num}}_{i-1/2}}{\Delta x_i}$
& $\jump{\vec{\omega}} \cdot \vec{f}^\mathrm{num} +\jump{\vec{\omega} \cdot \vec{H}}\vec{g}^{\mathrm{num}} - \jump{\vec{\omega} \cdot \vec{H} \vec{g}} \leq \jump{\psi}$
\\
\ref{algebraic_lemma_nc_form3}
& $\frac{\vec{H}^{\mathrm{num}}_{i+1/2} \jump{\vec{g}}_{i+1/2} + \vec{H}^{\mathrm{num}}_{i-1/2} \jump{\vec{g}}_{i-1/2}}{2 \Delta x_i}$
& $\jump{\vec{\omega}} \cdot \vec{f}^\mathrm{num} - \mean{\vec{\omega}} \cdot \vec{H}^\mathrm{num} \jump{\vec{g}} \leq \jump{\psi}$
\\
\ref{algebraic_lemma_nc_form4}
& $\frac{\left (\vec{H} \vec{g} \right ) ^{\mathrm{num}}_{i+1/2} - \left (\vec{H} \vec{g} \right ) ^{\mathrm{num}}_{i-1/2}}{\Delta x_i} - \frac{\vec{H}^{\mathrm{num}}_{i+1/2} - \vec{H}^{\mathrm{num}}_{i-1/2}}{\Delta x_i} \vec{g}_i$
& $\jump{\vec{\omega}} \cdot \vec{f}^\mathrm{num} + \jump{\vec{\omega}} \cdot \left (\vec{H} \vec{g} \right )^{\mathrm{num}} - \Bigl (\vec{\omega}_+ \cdot \vec{H}^{\mathrm{num}} \vec{g}_+ - \vec{\omega}_- \cdot \vec{H}^{\mathrm{num}} \vec{g}_-\Bigr ) \leq \jump{\psi}$      \\
      \bottomrule
    \end{tabular}
\end{table}

\subsubsection{Algebraic characterization of the third form\texorpdfstring{~\eqref{FV_form3}}{}}
We consider the third form~\eqref{FV_form3}, since it is often flexible enough in practice.
We omit the proofs of the other lemmas, which follow essentially the same steps.

\begin{lemma}{\label{algebraic_lemma_nc_form3}(Third form)}
Let $Y \subset \mathbb{R}^n$ be open, and $\vec{\omega}\colon Y \to \mathbb{R}^n$, $\vec{f}\colon Y \to \mathbb{R}^n$, $F\colon Y \to \mathbb{R}$, $\vec{H}\colon Y \to \mathbb{R}^{n \times m}$,  $\vec{g}\colon Y \to \mathbb{R}^m$, $\vec{f}^{\mathrm{num}}\colon Y \times Y \to \mathbb{R}^{n}$ and $\vec{H}^{\mathrm{num}}\colon Y \times Y \to \mathbb{R}^{n \times m}$ satisfying
\begin{equation}
\forall \vec{u} \in Y\colon \quad \vec{f}^\mathrm{num}(\vec{u},\vec{u}) = \vec{f}(\vec{u}), \quad \vec{H}^{\mathrm{num}}(\vec{u},\vec{u}) = \vec{H}(\vec{u})
\end{equation}
be given.
Then, $\exists F^\mathrm{num}\colon Y \times Y \to \mathbb{R}$ satisfying  $\forall \vec{u} \in Y\colon F^\mathrm{num}(\vec{u},\vec{u}) = F(\vec{u})$ and
\begin{multline}
    \forall \vec{u}_-, \vec{u}_0, \vec{u}_+ \in Y\colon
    \quad
    \vec{\omega}(\vec{u}_0) \cdot \Bigl (
        \vec{f}^\mathrm{num}(\vec{u}_0, \vec{u}_+) - \vec{f}^\mathrm{num}(\vec{u}_-, \vec{u}_0)
        \\
        + \frac{1}{2}\vec{H}^\mathrm{num} (\vec{u}_0, \vec{u}_+) \Bigl ( \vec{g}(\vec{u}_+) - \vec{g}(\vec{u}_0) \Bigr )
        + \frac{1}{2}\vec{H}^\mathrm{num}(\vec{u}_-, \vec{u}_0) \Bigl (\vec{g}(\vec{u}_0) - \vec{g}(\vec{u}_-) \Bigr )
    \Bigr )
    \\
    \geq
    F^\mathrm{num}(\vec{u}_0, \vec{u}_+) - F^\mathrm{num}(\vec{u}_-, \vec{u}_0)
    \label{sufficient_condition_form3}
\end{multline}
if and only if
\begin{multline}
    \forall \vec{u}_-, \vec{u}_+ \in Y\colon
    \\
    \Bigl (\vec{\omega}(\vec{u}_+) - \vec{\omega}(\vec{u}_-) \Bigr )
    \cdot \vec{f}^\mathrm{num}(\vec{u}_-, \vec{u}_+)
    - \frac{\vec{\omega}(\vec{u}_+) + \vec{\omega}(\vec{u}_-)}{2}
    \cdot \vec{H}^{\mathrm{num}}(\vec{u}_-, \vec{u}_+)
    \Bigl ( \vec{g}(\vec{u}_+) - \vec{g}(\vec{u}_-) \Bigr )
    \\
    \leq
    \Bigl (\vec{\omega}(\vec{u}_+) \cdot \vec{f}(\vec{u}_+) - F(\vec{u}_+) \Bigr )
    - \Bigl (\vec{\omega}(\vec{u}_-) \cdot \vec{f}(\vec{u}_-) - F(\vec{u}_-) \Bigr ).
\label{necessary_condition_form3}
\end{multline}
Moreover, if one of the conditions is satisfied with equality, then equality holds in the other condition as well, and $F^{\mathrm{num}}$ in~\eqref{sufficient_condition_form3} is determined uniquely as
\begin{equation}
F^{\mathrm{num}} = \mean{F} + \mean{\vec{\omega}}\cdot \vec{f}^{\mathrm{num}}  -\mean{\vec{\omega} \cdot \vec{f}}   - \frac{1}{4} \jump{\vec{\omega}} \cdot \vec{H}^{\mathrm{num}}\jump{\vec{g}}.
\label{F_num_form3}
\end{equation}
\end{lemma}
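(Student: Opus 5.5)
The plan is to reduce Lemma~\ref{algebraic_lemma_nc_form3} to Lemma~\ref{algebraic_lemma_fluctuation} by writing the third form as a fluctuation scheme. Set
\begin{equation*}
\vec{D}^-(\vec{u}_-,\vec{u}_+) = \vec{f}^\mathrm{num}(\vec{u}_-,\vec{u}_+) - \vec{f}(\vec{u}_-) + \tfrac12 \vec{H}^\mathrm{num}(\vec{u}_-,\vec{u}_+)\jump{\vec{g}},
\qquad
\vec{D}^+(\vec{u}_-,\vec{u}_+) = \vec{f}(\vec{u}_+) - \vec{f}^\mathrm{num}(\vec{u}_-,\vec{u}_+) + \tfrac12 \vec{H}^\mathrm{num}(\vec{u}_-,\vec{u}_+)\jump{\vec{g}}.
\end{equation*}
Consistency of $\vec{f}^\mathrm{num}$, together with $\jump{\vec{g}}=\vec{0}$ for equal states, gives $\vec{D}^\pm(\vec{u},\vec{u})=\vec{0}$. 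Adding $\vec{D}^+(\vec{u}_-,\vec{u}_0)+\vec{D}^-(\vec{u}_0,\vec{u}_+)$ makes the $\vec{f}(\vec{u}_0)$ terms cancel. The result is exactly the bracket in \eqref{sufficient_condition_form3}. So \eqref{sufficient_condition_form3} is literally \eqref{sufficient_condition_fluctuation} for these fluctuations, and Lemma~\ref{algebraic_lemma_fluctuation} applies directly.

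It remains to translate \eqref{necessary_condition_fluctuation} for these $\vec{D}^\pm$. We compute
\begin{equation*}
\vec{\omega}_+\cdot\vec{D}^+ + \vec{\omega}_-\cdot\vec{D}^-
= \vec{\omega}_+\cdot\vec{f}_+ - \vec{\omega}_-\cdot\vec{f}_- - \jump{\vec{\omega}}\cdot\vec{f}^\mathrm{num} + \mean{\vec{\omega}}\cdot\vec{H}^\mathrm{num}\jump{\vec{g}}.
\end{equation*}
The condition that this is $\geq \jump{F}$ rearranges directly into \eqref{necessary_condition_form3}, with $\psi=\vec{\omega}\cdot\vec{f}-F$ on the right. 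The two conditions are therefore equivalent, and the equality statement transfers verbatim from Lemma~\ref{algebraic_lemma_fluctuation}.

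For uniqueness of $F^\mathrm{num}$, we insert the fluctuations into \eqref{F_num_fluctuation}:
\begin{equation*}
\mean{F} + \tfrac12\vec{\omega}_-\cdot(\vec{f}^\mathrm{num}-\vec{f}_-) - \tfrac12\vec{\omega}_+\cdot(\vec{f}_+-\vec{f}^\mathrm{num}) + \tfrac14(\vec{\omega}_- - \vec{\omega}_+)\cdot\vec{H}^\mathrm{num}\jump{\vec{g}}.
\end{equation*}
This is $\mean{F}+\mean{\vec{\omega}}\cdot\vec{f}^\mathrm{num}-\mean{\vec{\omega}\cdot\vec{f}}-\tfrac14\jump{\vec{\omega}}\cdot\vec{H}^\mathrm{num}\jump{\vec{g}}$, i.e.\ \eqref{F_num_form3}. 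Its consistency $F^\mathrm{num}(\vec{u},\vec{u})=F(\vec{u})$ is immediate.

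The only delicate point is choosing the split of the flux difference and of the $\vec{H}^\mathrm{num}$ terms between $\vec{D}^+$ and $\vec{D}^-$. The split must satisfy two requirements: each fluctuation vanishes at equal states, and the $\vec{f}(\vec{u}_0)$ contributions cancel in the three-point sum. The symmetric halving of the nonconservative term above meets both, since each half sits at its own interface.

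As a fallback, one can avoid the reduction and repeat the direct proof of Lemma~\ref{algebraic_lemma}. Take $\vec{u}_0=\vec{u}_\mp$ and add the two resulting inequalities to get necessity. For sufficiency, split $\vec{\omega}_0$ into means and jumps at each interface as in Tadmor's computation.
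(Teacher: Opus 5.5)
Your reduction is correct, and it takes a genuinely different route from the paper. The paper does not invoke Lemma~\ref{algebraic_lemma_fluctuation} here. Instead it repeats the direct argument of Lemma~\ref{algebraic_lemma}, which is essentially your fallback:
\begin{itemize}
\item necessity by setting $\vec{u}_0=\vec{u}_\mp$ in \eqref{sufficient_condition_form3} and adding;
\item uniqueness of $F^{\mathrm{num}}$ by subtracting the two resulting equalities;
\item sufficiency by splitting $\vec{\omega}_0$ into interface means and jumps in both the $\vec{f}^{\mathrm{num}}$ and the $\tfrac12\vec{H}^{\mathrm{num}}$ terms, then applying the two-point condition on $(\vec{u}_-,\vec{u}_0)$ and $(\vec{u}_0,\vec{u}_+)$.
\end{itemize}

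Your $\vec{D}^\pm$ check out. They vanish at equal states, and their three-point sum is identically the bracket in \eqref{sufficient_condition_form3}. The two-point fluctuation condition is an exact rearrangement of \eqref{necessary_condition_form3}, and \eqref{F_num_fluctuation} evaluates to \eqref{F_num_form3}. Since every translation is an identity, the equality and uniqueness statements transfer as well.

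Your route buys economy and structure. The inequality work is done once, and the third form is exhibited as a special case of the fluctuation framework of Castro et al. The same device gives the first, second, and fourth forms and the $\alpha$-combination~\eqref{FV_convex} without new estimates. Put $\vec{f}^{\mathrm{num}}-\vec{f}_-$ plus the left cell's share of the interface nonconservative contribution into $\vec{D}^-$, and $\vec{f}_+-\vec{f}^{\mathrm{num}}$ plus the right cell's share into $\vec{D}^+$; for example, the shares are $\tfrac12\vec{H}(\vec{u}_\mp)\jump{\vec{g}}$ for the first form. This substantiates the paper's remark that the omitted proofs follow the same steps.

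The paper's direct computation, in turn, is self-contained and mirrors Tadmor's argument. It shows explicitly how the correction $-\tfrac14\jump{\vec{\omega}}\cdot\vec{H}^{\mathrm{num}}\jump{\vec{g}}$ in $F^{\mathrm{num}}$ arises from the mean/jump splitting of $\vec{\omega}_0$. That same computation is, however, ultimately what underlies the proof of Lemma~\ref{algebraic_lemma_fluctuation}.
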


\begin{proof}
The proof follows essentially the same steps as the proof of Lemma~\ref{algebraic_lemma}.
Thus, first we show that~\eqref{sufficient_condition_form3} implies~\eqref{necessary_condition_form3}.
We first choose $\vec{u}_0 = \vec{u}_-$ in~\eqref{sufficient_condition_form3}, resulting in
\begin{equation}
\vec{\omega}_- \cdot \Bigl (\vec{f}^{\mathrm{num}}(\vec{u}_-, \vec{u}_+) - \vec{f}_- \Bigr ) + \frac{\vec{\omega}_-}{2} \cdot \vec{H}^{\mathrm{num}}(\vec{u}_-, \vec{u}_+) \Bigl (\vec{g}_+ - \vec{g}_- \Bigr ) \geq F^{\mathrm{num}}(\vec{u}_-, \vec{u}_+) - F_-.
\label{eqminus_form3}
\end{equation}
Choosing $\vec{u}_0 = \vec{u}_+$ in~\eqref{sufficient_condition_form3} yields
\begin{equation}
\vec{\omega}_+ \cdot \Bigl (\vec{f}_+ - \vec{f}^{\mathrm{num}}(\vec{u}_-, \vec{u}_+) \Bigr )+ \frac{\vec{\omega}_+}{2} \cdot \vec{H}^{\mathrm{num}}(\vec{u}_-, \vec{u}_+) \Bigl (\vec{g}_+ - \vec{g}_- \Bigr ) \geq F_+ - F^{\mathrm{num}}(\vec{u}_-, \vec{u}_+) .
\label{eqplus_form3}
\end{equation}
Adding the two inequalities, we obtain
\begin{equation}
( \vec{\omega}_- - \vec{\omega}_+ ) \cdot \vec{f}^{\mathrm{num}}(\vec{u}_-, \vec{u}_+) + \frac{\vec{\omega}_- + \vec{\omega}_+}{2} \cdot \vec{H}^{\mathrm{num}}(\vec{u}_-, \vec{u}_+) \big (\vec{g}_+ - \vec{g}_- \big )  \geq (\vec{\omega}_- \cdot \vec{f}_- - F_-) - ( \vec{\omega}_+ \cdot \vec{f}_+ -F_+)
\end{equation}
which is equivalent to~\eqref{necessary_condition_form3}.

If equality holds in~\eqref{sufficient_condition_form3}, it holds in~\eqref{necessary_condition_form3} as well.
In this case, we can subtract~\eqref{eqplus_form3} from~\eqref{eqminus_form3} (both with equality) to obtain~\eqref{F_num_form3}.

Finally, we show that~\eqref{necessary_condition_form3} implies~\eqref{sufficient_condition_form3}. Abbreviating again $\psi = \vec{\omega} \cdot \vec{f} - F$,~\eqref{necessary_condition_form3} is equivalent to
\begin{equation}
  -(\vec{\omega}_+ - \vec{\omega}_-  ) \cdot \vec{f}^\mathrm{num}(\vec{u}_-, \vec{u}_+) + \frac{\vec{\omega}_+ + \vec{\omega}_-}{2} \cdot \vec{H}^{\mathrm{num}}(\vec{u}_-, \vec{u}_+) \Bigl ( \vec{g}_+ - \vec{g}_- \Bigr )\geq -(\psi_+ - \psi_-).
  \label{necessary_condition_psi_form3}
\end{equation}
Analogously to Lemma~\ref{algebraic_lemma}, we compute
\begin{equation}
\begin{aligned}
 &\qquad
 \vec{\omega}(\vec{u}_0) \cdot \Bigl ( \vec{f}^{\mathrm{num}}(\vec{u}_0, \vec{u}_+) - \vec{f}^{\mathrm{num}}(\vec{u}_-, \vec{u}_0) + \frac{1}{2}\vec{H}^\mathrm{num} (\vec{u}_0, \vec{u}_+) ( \vec{g}_+ - \vec{g}_0 ) + \frac{1}{2}\vec{H}^\mathrm{num}(\vec{u}_-, \vec{u}_0)  (\vec{g}_0 - \vec{g}_-  )  \Bigr )
 \\
 &= \frac{\vec{\omega}_0 + \vec{\omega}_+}{2} \cdot \vec{f}^{\mathrm{num}}(\vec{u}_0,\vec{u}_+) - \frac{\vec{\omega}_+ - \vec{\omega}_0}{2} \cdot \vec{f}^{\mathrm{num}} (\vec{u}_0,\vec{u}_+)
 \\
 &\quad - \frac{\vec{\omega}_- + \vec{\omega}_0}{2} \cdot \vec{f}^\mathrm{num}(\vec{u}_-, \vec{u}_0)
    - \frac{\vec{\omega}_0 - \vec{\omega}_-}{2} \cdot \vec{f}^\mathrm{num}(\vec{u}_-, \vec{u}_0)
    \\
    &\quad + \frac{\vec{\omega}_0 + \vec{\omega}_+}{4} \cdot \vec{H}^{\mathrm{num}}(\vec{u}_0, \vec{u}_+) (\vec{g}_+ - \vec{g}_0) - \frac{\vec{\omega}_+ - \vec{\omega}_0}{4} \vec{H}^{\mathrm{num}}(\vec{u}_0, \vec{u}_+)(\vec{g}_+ - \vec{g}_0)
    \\
    &\quad +\frac{\vec{\omega}_- + \vec{\omega}_0}{4} \cdot \vec{H}^{\mathrm{num}}(\vec{u}_-, \vec{u}_0) (\vec{g}_0 - \vec{g}_-) + \frac{\vec{\omega}_0 - \vec{\omega}_-}{4} \vec{H}^{\mathrm{num}}(\vec{u}_-, \vec{u}_0)(\vec{g}_0 - \vec{g}_-)
    \\
    &\stackrel{\eqref{necessary_condition_psi_form3}}{\ge}
    \frac{\vec{\omega}_0 + \vec{\omega}_+}{2} \cdot \vec{f}^\mathrm{num}(\vec{u}_0, \vec{u}_+)
    - \frac{\psi_+ - \psi_0}{2}
    - \frac{\vec{\omega}_- + \vec{\omega}_0}{2} \cdot \vec{f}^\mathrm{num}(\vec{u}_-, \vec{u}_0)
    - \frac{\psi_0 - \psi_-}{2}
    \\
     &\quad - \frac{\vec{\omega}_+ - \vec{\omega}_0}{4}\cdot \vec{H}^{\mathrm{num}}(\vec{u}_0, \vec{u}_+)(\vec{g}_+ - \vec{g}_0) + \frac{\vec{\omega}_0 - \vec{\omega}_-}{4} \cdot \vec{H}^{\mathrm{num}}(\vec{u}_-, \vec{u}_0)(\vec{g}_0 - \vec{g}_-)
     \\
     &= F^{\mathrm{num}}(\vec{u}_0, \vec{u}_+) - F^{\mathrm{num}}(\vec{u}_-, \vec{u}_0),
 \end{aligned}
\end{equation}
where we have used the definition of $F^{\mathrm{num}}$ in~\eqref{F_num_form3} in the last step.
\end{proof}

\subsubsection{Algebraic characterization of the first form\texorpdfstring{~\eqref{FV_form1}}{}}

\begin{lemma}{\label{algebraic_lemma_nc_form1}(First form)}
Let $Y \subset \mathbb{R}^n$ be open, and $\vec{\omega}\colon Y \to \mathbb{R}^n$, $\vec{f}\colon Y \to \mathbb{R}^n$, $F\colon Y \to \mathbb{R}$, $\vec{H}\colon Y \to \mathbb{R}^{n \times m}$, $\vec{g}\colon Y \to \mathbb{R}^m$ and $\vec{f}^{\mathrm{num}}\colon Y \times Y \to \mathbb{R}^n$  satisfying
\begin{equation}
\forall \vec{u} \in Y\colon \quad \vec{f}^\mathrm{num}(\vec{u},\vec{u}) = \vec{f}(\vec{u}),
\end{equation}
be given. Then,
$\exists F^\mathrm{num}\colon Y \times Y \to \mathbb{R}$ satisfying $\forall \vec{u}\in Y\colon F^\mathrm{num}(\vec{u},\vec{u}) = F(\vec{u})$ and
\begin{multline}
    \forall \vec{u}_-, \vec{u}_0, \vec{u}_+ \in Y\colon \quad \vec{\omega}(\vec{u}_0) \cdot \Bigl (  \vec{f}^\mathrm{num}(\vec{u}_0, \vec{u}_+) - \vec{f}^\mathrm{num}(\vec{u}_-, \vec{u}_0) \\+ \frac{\vec{H} (\vec{u}_0)  (\vec{g}(\vec{u}_+) - \vec{g}(\vec{u}_0) ) + \vec{H}(\vec{u}_0)  (\vec{g}(  \vec{u}_0) - \vec{g}(\vec{u}_-)  )}{2} \Bigr ) \geq F^\mathrm{num}(\vec{u}_0, \vec{u}_+) - F^\mathrm{num}(\vec{u}_-, \vec{u}_0)
    \label{sufficient_condition_form1}
\end{multline}
if and only if
\begin{multline}
    \forall \vec{u}_+, \vec{u}_- \in Y\colon
    \\
    \Bigl (\vec{\omega}(\vec{u}_+) - \vec{\omega}(\vec{u}_-) \Bigr )
    \cdot \vec{f}^\mathrm{num}(\vec{u}_-, \vec{u}_+)
    - \frac{\vec{\omega}(\vec{u}_+) \cdot \vec{H}(\vec{u}_+) + \vec{\omega}(\vec{u}_-) \cdot \vec{H}(\vec{u}_-)}{2}
    \cdot \Bigl ( \vec{g}(\vec{u}_+) - \vec{g}(\vec{u}_-) \Bigr )
    \\
    \leq  \Bigl ( \vec{\omega}(\vec{u}_+) \cdot \vec{f}(\vec{u}_+) - F(\vec{u}_+) \Bigr ) - \Bigl (\vec{\omega}(\vec{u}_-) \cdot \vec{f}(\vec{u}_-) - F(\vec{u}_-) \Bigr ).
\label{necessary_condition_form1}
\end{multline}
Moreover, if one of the conditions is satisfied with equality, then equality holds in the other condition as well, and $F^{\mathrm{num}}$ in~\eqref{sufficient_condition_form1} is determined uniquely as
\begin{equation}
F^{\mathrm{num}} = \mean{F} + \mean{\vec{\omega}} \cdot \vec{f}^{\mathrm{num}}-\mean{\vec{\omega} \cdot \vec{f}}  - \frac{1}{4} \jump{\vec{\omega} \cdot \vec{H}} \jump{\vec{g}}.
\label{F_num_form1}
\end{equation}
\end{lemma}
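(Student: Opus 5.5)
The quickest route is to reduce the first form to the third form, Lemma~\ref{algebraic_lemma_nc_form3}, but $\vec{H}_i$ is a cell value, not an interface flux, so the reduction does not work verbatim. I will therefore repeat the three-step argument of Lemma~\ref{algebraic_lemma} and Lemma~\ref{algebraic_lemma_nc_form3}. The only change is that the nonconservative contribution at cell $\vec{u}_0$ is $\tfrac12\vec{\omega}_0\cdot\vec{H}_0(\vec{g}_+-\vec{g}_0)+\tfrac12\vec{\omega}_0\cdot\vec{H}_0(\vec{g}_0-\vec{g}_-)$. This term splits naturally into one piece per interface. I abbreviate $\psi=\vec{\omega}\cdot\vec{f}-F$.

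\emph{Necessity.} First choose $\vec{u}_0=\vec{u}_-$ in~\eqref{sufficient_condition_form1}. Consistency of $\vec{f}^{\mathrm{num}}$ and the vanishing jump $\vec{g}_--\vec{g}_-$ give
\[
\vec{\omega}_-\cdot(\vec{f}^{\mathrm{num}}-\vec{f}_-)+\tfrac12\vec{\omega}_-\cdot\vec{H}_-(\vec{g}_+-\vec{g}_-)\ge F^{\mathrm{num}}(\vec{u}_-,\vec{u}_+)-F_-.
\]
Next choose $\vec{u}_0=\vec{u}_+$, which gives
\[
\vec{\omega}_+\cdot(\vec{f}_+-\vec{f}^{\mathrm{num}})+\tfrac12\vec{\omega}_+\cdot\vec{H}_+(\vec{g}_+-\vec{g}_-)\ge F_+-F^{\mathrm{num}}(\vec{u}_-,\vec{u}_+).
\]
Adding the two inequalities eliminates $F^{\mathrm{num}}$. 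The result is exactly~\eqref{necessary_condition_form1}, with the mean $\mean{\vec{\omega}\cdot\vec{H}}$ multiplying $\jump{\vec{g}}$.

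\emph{Uniqueness of $F^{\mathrm{num}}$.} In the equality case, subtract the second relation from the first. Solving for $F^{\mathrm{num}}$ gives $\mean{F}+\mean{\vec{\omega}}\cdot\vec{f}^{\mathrm{num}}-\mean{\vec{\omega}\cdot\vec{f}}$ plus the correction $\tfrac14(\vec{\omega}_-\cdot\vec{H}_--\vec{\omega}_+\cdot\vec{H}_+)\jump{\vec{g}}=-\tfrac14\jump{\vec{\omega}\cdot\vec{H}}\jump{\vec{g}}$, which is~\eqref{F_num_form1}. This $F^{\mathrm{num}}$ is consistent because every jump vanishes when both arguments coincide.

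\emph{Sufficiency.} Define $F^{\mathrm{num}}$ by~\eqref{F_num_form1}. Split the flux terms exactly as in Lemma~\ref{algebraic_lemma}, writing $\vec{\omega}_0=\tfrac12(\vec{\omega}_0+\vec{\omega}_\pm)\mp\tfrac12(\vec{\omega}_\pm-\vec{\omega}_0)$. Split the nonconservative term on the interface $(\vec{u}_0,\vec{u}_+)$ as
\[
\tfrac12\vec{\omega}_0\cdot\vec{H}_0\jump{\vec{g}}=\tfrac12\mean{\vec{\omega}\cdot\vec{H}}\jump{\vec{g}}-\tfrac14\jump{\vec{\omega}\cdot\vec{H}}\jump{\vec{g}},
\]
and on the interface $(\vec{u}_-,\vec{u}_0)$ as $\tfrac12\mean{\vec{\omega}\cdot\vec{H}}\jump{\vec{g}}+\tfrac14\jump{\vec{\omega}\cdot\vec{H}}\jump{\vec{g}}$.

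On each interface, group $-\tfrac12\jump{\vec{\omega}}\cdot\vec{f}^{\mathrm{num}}$ with $+\tfrac12\mean{\vec{\omega}\cdot\vec{H}}\jump{\vec{g}}$. By~\eqref{necessary_condition_form1}, each such group is bounded below by $-\tfrac12\jump{\psi}$ on that interface. What remains is $\mean{\vec{\omega}}\cdot\vec{f}^{\mathrm{num}}$, $-\tfrac12\jump{\psi}$ and $\mp\tfrac14\jump{\vec{\omega}\cdot\vec{H}}\jump{\vec{g}}$ on the two interfaces. Rewrite $-\tfrac12(\psi_+-\psi_0)-\tfrac12(\psi_0-\psi_-)=-\tfrac12(\psi_++\psi_0)+\tfrac12(\psi_0+\psi_-)$, and note $-\mean{\psi}=\mean{F}-\mean{\vec{\omega}\cdot\vec{f}}$. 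The remaining terms then collapse to $F^{\mathrm{num}}(\vec{u}_0,\vec{u}_+)-F^{\mathrm{num}}(\vec{u}_-,\vec{u}_0)$. When~\eqref{necessary_condition_form1} holds with equality, every step is an equality.

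The \textbf{main obstacle} is the bookkeeping of signs in this interface-wise splitting of the cell-centered term $\vec{\omega}_0\cdot\vec{H}_0$. The $\pm\tfrac14\jump{\vec{\omega}\cdot\vec{H}}\jump{\vec{g}}$ remainders must telescope into the single correction term in~\eqref{F_num_form1}. I would check this first: the left interface contributes $+\tfrac14$ and the right $-\tfrac14$, matching $F^{\mathrm{num}}(\vec{u}_0,\vec{u}_+)-F^{\mathrm{num}}(\vec{u}_-,\vec{u}_0)$.
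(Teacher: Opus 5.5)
Your proposal is correct and is essentially the argument the paper intends; the paper omits this proof, saying it follows the steps of Lemma~\ref{algebraic_lemma_nc_form3}. Those steps are exactly yours: specialize $\vec{u}_0=\vec{u}_\mp$ and add, subtract in the equality case, and split $\vec{\omega}_0$ and $\tfrac12\vec{\omega}_0\cdot\vec{H}_0\jump{\vec{g}}$ interface-wise, and your $\mp\tfrac14\jump{\vec{\omega}\cdot\vec{H}}\jump{\vec{g}}$ bookkeeping checks out. One cosmetic slip: the identity should read $\vec{\omega}_0=\tfrac12(\vec{\omega}_0+\vec{\omega}_\pm)-\tfrac12(\vec{\omega}_\pm-\vec{\omega}_0)$ for both signs (your $\mp$ is only right when written with the interface jump $\jump{\vec{\omega}}$); also, the actual one-line shortcut is Lemma~\ref{algebraic_lemma_fluctuation} with $\vec{D}^-=\vec{f}^{\mathrm{num}}-\vec{f}_-+\tfrac12\vec{H}_-\jump{\vec{g}}$ and $\vec{D}^+=\vec{f}_+-\vec{f}^{\mathrm{num}}+\tfrac12\vec{H}_+\jump{\vec{g}}$, which reproduces both conditions and the formula for $F^{\mathrm{num}}$.
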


\subsubsection{Algebraic characterization of the second form\texorpdfstring{~\eqref{FV_form2}}{}}

\begin{lemma}{\label{algebraic_lemma_nc_form2}(Second form)}
Let $Y \subset \mathbb{R}^n$ be open, and $\vec{\omega}\colon Y \to \mathbb{R}^n$, $\vec{f}\colon Y \to \mathbb{R}^n$, $F\colon Y \to \mathbb{R}$, $\vec{H}\colon Y \to \mathbb{R}^{n \times m}$, $\vec{g}\colon Y \to \mathbb{R}^m$, $\vec{g}^{\mathrm{num}}\colon Y \times Y \to \mathbb{R}^m$ and $\vec{f}^{\mathrm{num}}\colon Y \times Y \to \mathbb{R}^n$  satisfying
\begin{equation}
\forall \vec{u} \in Y\colon \quad \vec{f}^\mathrm{num}(\vec{u},\vec{u}) = \vec{f}(\vec{u}), \quad \vec{g}^\mathrm{num}(\vec{u},\vec{u}) = \vec{g}(\vec{u}),
\end{equation}
be given. Then,
$\exists F^\mathrm{num}\colon Y \times Y \to \mathbb{R}$ satisfying $\forall \vec{u}\in Y\colon F^\mathrm{num}(\vec{u},\vec{u}) = F(\vec{u})$ and
\begin{multline}
    \forall \vec{u}_-, \vec{u}_0, \vec{u}_+ \in Y\colon \quad \vec{\omega}(\vec{u}_0) \cdot \Bigl (  \vec{f}^\mathrm{num}(\vec{u}_0, \vec{u}_+) - \vec{f}^\mathrm{num}(\vec{u}_-, \vec{u}_0) \\+ \vec{H} (\vec{u}_0)  \vec{g}^\mathrm{num}(\vec{u}_0, \vec{u}_+) - \vec{H}(\vec{u}_0)\vec{g}^\mathrm{num}(\vec{u}_-, \vec{u}_0)  \Bigr ) \geq F^\mathrm{num}(\vec{u}_0, \vec{u}_+) - F^\mathrm{num}(\vec{u}_-, \vec{u}_0)
    \label{sufficient_condition_form2}
\end{multline}
if and only if
\begin{multline}
    \forall \vec{u}_+, \vec{u}_- \in Y\colon
    \\
    \Bigl (\vec{\omega}(\vec{u}_+) - \vec{\omega}(\vec{u}_-) \Bigr )
    \cdot \vec{f}^\mathrm{num}(\vec{u}_-, \vec{u}_+)
    + \Bigl (\vec{\omega}(\vec{u}_+) \cdot \vec{H}(\vec{u}_+) - \vec{\omega}(\vec{u}_-) \cdot \vec{H}(\vec{u}_-)  \Bigr ) \vec{g}^{\mathrm{num}}(\vec{u}_-, \vec{u}_+)
    \\
    - \Bigl ( \vec{\omega}(\vec{u}_+) \cdot \vec{H}(\vec{u}_+) \vec{g}_+ - \vec{\omega}(\vec{u}_-) \cdot \vec{H}(\vec{u}_-) \vec{g}_- \Bigr )\leq  \Bigl ( \vec{\omega}(\vec{u}_+) \cdot \vec{f}(\vec{u}_+) - F(\vec{u}_+) \Bigr ) - \Bigl (\vec{\omega}(\vec{u}_-) \cdot \vec{f}(\vec{u}_-) - F(\vec{u}_-) \Bigr ).
\label{necessary_condition_form2}
\end{multline}
Moreover, if one of the conditions is satisfied with equality, then equality holds in the other condition as well, and $F^{\mathrm{num}}$ in~\eqref{sufficient_condition_form2} is determined uniquely as
\begin{equation}
F^{\mathrm{num}} = \mean{F} + \mean{\vec{\omega}} \cdot \vec{f}^{\mathrm{num}}-\mean{\vec{\omega} \cdot \vec{f}}  - \mean{\vec{\omega} \cdot \vec{H} \vec{g}} + \mean{\vec{\omega} \cdot \vec{H}} \vec{g}^{\mathrm{num}}.
\label{F_num_form2}
\end{equation}
\end{lemma}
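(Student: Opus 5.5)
The plan mirrors the proofs of Lemma~\ref{algebraic_lemma} and Lemma~\ref{algebraic_lemma_nc_form3}. Only two structural facts are needed: the three-point expression in~\eqref{sufficient_condition_form2} splits into a right-interface part and a left-interface part, and the consistency $\vec{g}^\mathrm{num}(\vec{u},\vec{u}) = \vec{g}(\vec{u})$ (together with that of $\vec{f}^\mathrm{num}$) makes one of them reduce to a purely local term when two states coincide.

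\textbf{Necessity.} First I show that~\eqref{sufficient_condition_form2} implies~\eqref{necessary_condition_form2}.
\begin{itemize}
\item Set $\vec{u}_0 = \vec{u}_-$. Then $\vec{f}^\mathrm{num}(\vec{u}_-,\vec{u}_-) = \vec{f}_-$ and $\vec{g}^\mathrm{num}(\vec{u}_-,\vec{u}_-) = \vec{g}_-$, so
\[
\vec{\omega}_- \cdot (\vec{f}^\mathrm{num} - \vec{f}_-) + \vec{\omega}_- \cdot \vec{H}_- (\vec{g}^\mathrm{num} - \vec{g}_-) \ge F^\mathrm{num}(\vec{u}_-,\vec{u}_+) - F_-.
\]
\item Set $\vec{u}_0 = \vec{u}_+$. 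This gives
\[
\vec{\omega}_+ \cdot (\vec{f}_+ - \vec{f}^\mathrm{num}) + \vec{\omega}_+ \cdot \vec{H}_+ (\vec{g}_+ - \vec{g}^\mathrm{num}) \ge F_+ - F^\mathrm{num}(\vec{u}_-,\vec{u}_+).
\]
\item Add the two inequalities. $F^\mathrm{num}$ cancels, and rearranging with $\psi = \vec{\omega}\cdot\vec{f} - F$ yields exactly~\eqref{necessary_condition_form2}.
\end{itemize}

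\textbf{Uniqueness of $F^\mathrm{num}$.} In the equality case, subtracting the second identity from the first and dividing by two solves for $F^\mathrm{num}$. Grouping the terms gives
\[
\mean{F} + \mean{\vec{\omega}}\cdot\vec{f}^\mathrm{num} - \mean{\vec{\omega}\cdot\vec{f}} + \mean{\vec{\omega}\cdot\vec{H}}\,\vec{g}^\mathrm{num} - \mean{\vec{\omega}\cdot\vec{H}\vec{g}},
\]
which is~\eqref{F_num_form2}. Consistency $F^\mathrm{num}(\vec{u},\vec{u}) = F(\vec{u})$ follows immediately from the consistency of $\vec{f}^\mathrm{num}$ and $\vec{g}^\mathrm{num}$.

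\textbf{Sufficiency.} For the converse I define $F^\mathrm{num}$ by~\eqref{F_num_form2} and decompose the left-hand side of~\eqref{sufficient_condition_form2} interface by interface.
\begin{itemize}
\item The flux terms are treated as in Lemma~\ref{algebraic_lemma}: $\vec{\omega}_0 = \frac{\vec{\omega}_0 + \vec{\omega}_+}{2} - \frac{\vec{\omega}_+ - \vec{\omega}_0}{2}$ on the right interface, and analogously on the left.
\item The nonconservative term at the right interface is $\vec{\omega}_0\cdot\vec{H}_0\,\vec{g}^\mathrm{num}(\vec{u}_0,\vec{u}_+)$. I write $\vec{\omega}_0\cdot\vec{H}_0 = \mean{\vec{\omega}\cdot\vec{H}}_{0,+} - \tfrac12\jump{\vec{\omega}\cdot\vec{H}}_{0,+}$. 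The left-interface term is treated symmetrically with a plus sign on the jump.
\item The jump parts at each interface are exactly the combination appearing in~\eqref{necessary_condition_form2}. Applying that inequality with a factor $\tfrac12$ replaces them by $-\tfrac12\jump{\psi}$, up to the local terms $\mp\tfrac12\jump{\vec{\omega}\cdot\vec{H}\vec{g}}$ that come with the condition.
\item The averaged parts, together with these local terms, then combine into $F^\mathrm{num}(\vec{u}_0,\vec{u}_+) - F^\mathrm{num}(\vec{u}_-,\vec{u}_0)$. The reason is that $\tfrac12(\text{value at }\vec{u}_0) + \tfrac12(\text{value at }\vec{u}_\pm)$ recombines into the means occurring in~\eqref{F_num_form2}.
\end{itemize}

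\textbf{Main obstacle.} The only delicate point is the bookkeeping of $\vec{\omega}_0\cdot\vec{H}_0\vec{g}_0$. This local quantity does not appear in the scheme itself, but it must be added and subtracted on both interfaces so that the terms $-\mean{\vec{\omega}\cdot\vec{H}\vec{g}}$ in $F^\mathrm{num}$ match. It enters with opposite signs on the two sides and therefore cancels, exactly as $\psi_0$ does in Lemma~\ref{algebraic_lemma}. I would verify this cancellation explicitly. Everything else is routine algebra.
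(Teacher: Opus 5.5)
Your proposal is correct and follows the route the paper intends. The paper omits this proof, saying it follows the same steps as Lemma~\ref{algebraic_lemma} and Lemma~\ref{algebraic_lemma_nc_form3}: for necessity and uniqueness, specialize $\vec{u}_0=\vec{u}_\mp$ and then add and subtract; for sufficiency, split $\vec{\omega}_0$ and $\vec{\omega}_0\cdot\vec{H}_0$ into interface means and jumps and apply the two-point condition at each interface.

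One small sign slip in your bookkeeping: with jumps taken as right minus left at each interface, the local term supplied by the condition is $-\tfrac12\jump{\vec{\omega}\cdot\vec{H}\vec{g}}$ on both interfaces, not $\mp$. It is $\vec{\omega}_0\cdot\vec{H}_0\vec{g}_0$ inside these jumps that enters with opposite signs and cancels, as you correctly state in your last paragraph.
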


\subsubsection{Algebraic characterization of the fourth form\texorpdfstring{~\eqref{FV_form4}}{}}

\begin{lemma}{\label{algebraic_lemma_nc_form4}(Fourth form)}
Let $Y \subset \mathbb{R}^n$ be open, and $\vec{\omega}\colon Y \to \mathbb{R}^n$, $\vec{f}\colon Y \to \mathbb{R}^n$, $F\colon Y \to \mathbb{R}$, $\vec{H}\colon Y \to \mathbb{R}^{n \times m}$, $\vec{g}\colon Y \to \mathbb{R}^m$, $\vec{f}^{\mathrm{num}}\colon Y \times Y \to \mathbb{R}^n$, $\vec{H}^{\mathrm{num}}\colon Y \times Y \to \mathbb{R}^{n \times m}$ and $(\vec{H} \vec{g})^{\mathrm{num}}\colon Y \times Y \to \mathbb{R}^n$ satisfying
\begin{equation}
\forall \vec{u} \in Y\colon \quad \vec{f}^\mathrm{num}(\vec{u},\vec{u}) = \vec{f}(\vec{u}), \quad \vec{H}^\mathrm{num}(\vec{u},\vec{u}) = \vec{H}(\vec{u}), \quad (\vec{H} \vec{g})^\mathrm{num}(\vec{u},\vec{u}) = \vec{H}(\vec{u}) \vec{g}(\vec{u}),
\end{equation}
be given. Then,
$\exists F^\mathrm{num}\colon Y \times Y \to \mathbb{R}$ satisfying $\forall \vec{u}\in Y\colon F^\mathrm{num}(\vec{u},\vec{u}) = F(\vec{u})$ and
\begin{multline}
    \forall \vec{u}_-, \vec{u}_0, \vec{u}_+ \in Y\colon \quad \vec{\omega}(\vec{u}_0) \cdot \Bigl (  \vec{f}^\mathrm{num}(\vec{u}_0, \vec{u}_+) - \vec{f}^\mathrm{num}(\vec{u}_-, \vec{u}_0) + \left ( \vec{H}\vec{g}\right )^\mathrm{num}(\vec{u}_0, \vec{u}_+) - \left (\vec{H}\vec{g}\right )^\mathrm{num}(\vec{u}_-, \vec{u}_0) \\- \vec{H}^{\mathrm{num}}(\vec{u}_0, \vec{u}_+)\vec{g}(\vec{u}_0) + \vec{H}^\mathrm{num}(\vec{u}_-,\vec{u}_0)\vec{g}(\vec{u}_0)  \Bigr ) \geq F^\mathrm{num}(\vec{u}_0, \vec{u}_+) - F^\mathrm{num}(\vec{u}_-, \vec{u}_0)
    \label{sufficient_condition_form4}
\end{multline}
if and only if
\begin{multline}
    \forall \vec{u}_+, \vec{u}_- \in Y\colon
    \\
    \Bigl (\vec{\omega}(\vec{u}_+) - \vec{\omega}(\vec{u}_-) \Bigr )
    \cdot \vec{f}^\mathrm{num}(\vec{u}_-, \vec{u}_+)
    + \Bigl (\vec{\omega}(\vec{u}_+) - \vec{\omega}(\vec{u}_-)  \Bigr ) \cdot \left ( \vec{H}\vec{g} \right )^{\mathrm{num}}(\vec{u}_-, \vec{u}_+)
    - \vec{\omega}(\vec{u}_+) \cdot \vec{H}^{\mathrm{num}}(\vec{u}_-, \vec{u}_+)  \vec{g}(\vec{u}_+)
    \\
    + \vec{\omega}(\vec{u}_-) \cdot \vec{H}^{\mathrm{num}}(\vec{u}_-, \vec{u}_+) \vec{g}(\vec{u}_-) \leq  \Bigl ( \vec{\omega}(\vec{u}_+) \cdot \vec{f}(\vec{u}_+) - F(\vec{u}_+) \Bigr ) - \Bigl (\vec{\omega}(\vec{u}_-) \cdot \vec{f}(\vec{u}_-) - F(\vec{u}_-) \Bigr ).
\label{necessary_condition_form4}
\end{multline}
Moreover, if one of the conditions is satisfied with equality, then equality holds in the other condition as well, and $F^{\mathrm{num}}$ in~\eqref{sufficient_condition_form4} is determined uniquely as
\begin{equation}
F^{\mathrm{num}} = \mean{F} + \mean{\vec{\omega}} \cdot \vec{f}^{\mathrm{num}}-\mean{\vec{\omega} \cdot \vec{f}}  +\mean{\vec{\omega}} \cdot \left ( \vec{H}\vec{g}\right )^{\mathrm{num}} - \frac{\vec{\omega}_+}{2} \cdot \vec{H}^{\mathrm{num}} \vec{g}_+ - \frac{\vec{\omega}_-}{2} \cdot \vec{H}^{\mathrm{num}} \vec{g}_-.
\label{F_num_form4}
\end{equation}
\end{lemma}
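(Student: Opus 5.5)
The plan is to reduce the fourth form to the fluctuation characterization of Lemma~\ref{algebraic_lemma_fluctuation}, or equivalently to repeat the three steps of Lemma~\ref{algebraic_lemma_nc_form3} directly. For the reduction, define the fluctuations
\begin{equation*}
\begin{aligned}
\vec{D}^-(\vec{u}_-,\vec{u}_+) &= \vec{f}^\mathrm{num}(\vec{u}_-,\vec{u}_+) - \vec{f}_- + (\vec{H}\vec{g})^\mathrm{num}(\vec{u}_-,\vec{u}_+) - \vec{H}_-\vec{g}_- - \vec{H}^\mathrm{num}(\vec{u}_-,\vec{u}_+)\vec{g}_- + \vec{H}_-\vec{g}_-,\\
\vec{D}^+(\vec{u}_-,\vec{u}_+) &= \vec{f}_+ - \vec{f}^\mathrm{num}(\vec{u}_-,\vec{u}_+) + \vec{H}_+\vec{g}_+ - (\vec{H}\vec{g})^\mathrm{num}(\vec{u}_-,\vec{u}_+) + \vec{H}^\mathrm{num}(\vec{u}_-,\vec{u}_+)\vec{g}_+ - \vec{H}_+\vec{g}_+.
\end{aligned}
\end{equation*}
The $\vec{H}_\pm\vec{g}_\pm$ terms cancel, and they are inserted only to make the bookkeeping symmetric.

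First, the consistency assumptions on $\vec{f}^\mathrm{num}$, $\vec{H}^\mathrm{num}$ and $(\vec{H}\vec{g})^\mathrm{num}$ give $\vec{D}^\pm(\vec{u},\vec{u})=\vec{0}$.

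Second, check that $\vec{D}^+(\vec{u}_-,\vec{u}_0)+\vec{D}^-(\vec{u}_0,\vec{u}_+)$ reproduces exactly the bracket in~\eqref{sufficient_condition_form4}. In this sum, the terms $\vec{f}_0$ and $\vec{H}_0\vec{g}_0$ cancel. What remains is $\vec{f}^\mathrm{num}$-differences, $(\vec{H}\vec{g})^\mathrm{num}$-differences, and $-\vec{H}^\mathrm{num}(\vec{u}_0,\vec{u}_+)\vec{g}_0+\vec{H}^\mathrm{num}(\vec{u}_-,\vec{u}_0)\vec{g}_0$, which is the bracket. Hence~\eqref{sufficient_condition_form4} is literally~\eqref{sufficient_condition_fluctuation}, and Lemma~\ref{algebraic_lemma_fluctuation} applies, including the equality statement and uniqueness of $F^\mathrm{num}$.

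Third, rewrite~\eqref{necessary_condition_fluctuation} for these $\vec{D}^\pm$:
\begin{equation*}
\vec{\omega}_+\cdot\vec{D}^+ + \vec{\omega}_-\cdot\vec{D}^- = -\jump{\vec{\omega}}\cdot\vec{f}^\mathrm{num} + \jump{\vec{\omega}\cdot\vec{f}} - \jump{\vec{\omega}}\cdot(\vec{H}\vec{g})^\mathrm{num} + \vec{\omega}_+\cdot\vec{H}^\mathrm{num}\vec{g}_+ - \vec{\omega}_-\cdot\vec{H}^\mathrm{num}\vec{g}_-.
\end{equation*}
The condition that this is at least $\jump{F}$ is, after moving terms and using $\psi=\vec{\omega}\cdot\vec{f}-F$, exactly~\eqref{necessary_condition_form4}.

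Fourth, insert the $\vec{D}^\pm$ into~\eqref{F_num_fluctuation}. Expanding gives
\begin{equation*}
\mean{F} + \mean{\vec{\omega}}\cdot\vec{f}^\mathrm{num} - \mean{\vec{\omega}\cdot\vec{f}} + \mean{\vec{\omega}}\cdot(\vec{H}\vec{g})^\mathrm{num} - \mean{\vec{\omega}\cdot\vec{H}\vec{g}} - \tfrac12\bigl(\vec{\omega}_-\cdot\vec{H}^\mathrm{num}\vec{g}_- + \vec{\omega}_+\cdot\vec{H}^\mathrm{num}\vec{g}_+\bigr) + \mean{\vec{\omega}\cdot\vec{H}\vec{g}},
\end{equation*}
in which the two $\mean{\vec{\omega}\cdot\vec{H}\vec{g}}$ terms cancel, leaving~\eqref{F_num_form4}. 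Consistency $F^\mathrm{num}(\vec{u},\vec{u})=F(\vec{u})$ follows from the consistency assumptions.

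The main obstacle is purely bookkeeping: the signs in the final two terms of~\eqref{F_num_form4} and~\eqref{necessary_condition_form4}. In the direct route, one would instead substitute $\vec{u}_0=\vec{u}_\mp$, add the resulting inequalities, and, for sufficiency, split $\vec{\omega}_0$ into mean and half-jump as before. There the $\vec{H}^\mathrm{num}\vec{g}_0$ terms need the regrouping $\vec{\omega}_0\cdot\vec{H}^\mathrm{num}\vec{g}_0 = \tfrac12(\vec{\omega}_0\cdot\vec{H}^\mathrm{num}\vec{g}_0+\ldots)$ to match the asymmetric products. The fluctuation reduction avoids that, so I would present it that way.
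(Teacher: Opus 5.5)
Your proof is correct, but it takes a different route from the one the paper indicates. The paper omits the proof of this lemma and says it follows the same steps as Lemma~\ref{algebraic_lemma_nc_form3}. That is the direct Tadmor-style argument:
\begin{itemize}
\item for necessity and for the formula of $F^{\mathrm{num}}$, set $\vec{u}_0=\vec{u}_\mp$ in \eqref{sufficient_condition_form4}, then add and subtract the two resulting inequalities;
\item for sufficiency, rewrite the $\vec{\omega}_0$-weighted interface terms via means and half-jumps, apply the two-point inequality at each interface, and let \eqref{F_num_form4} absorb the remainder.
\end{itemize}

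You instead observe that the fourth form is a fluctuation scheme with
$\vec{D}^-(\vec{u}_-,\vec{u}_+)=\vec{f}^{\mathrm{num}}-\vec{f}_-+(\vec{H}\vec{g})^{\mathrm{num}}-\vec{H}^{\mathrm{num}}\vec{g}_-$
and
$\vec{D}^+(\vec{u}_-,\vec{u}_+)=\vec{f}_+-\vec{f}^{\mathrm{num}}-(\vec{H}\vec{g})^{\mathrm{num}}+\vec{H}^{\mathrm{num}}\vec{g}_+$.
With these, \eqref{sufficient_condition_form4} is literally \eqref{sufficient_condition_fluctuation}. The whole lemma, including the equality case and the uniqueness of $F^{\mathrm{num}}$, then becomes a corollary of Lemma~\ref{algebraic_lemma_fluctuation}. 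The following steps all check out with the right signs:
\begin{itemize}
\item the consistency $\vec{D}^\pm(\vec{u},\vec{u})=\vec{0}$;
\item the cancellation of $\vec{f}_0$ and of the auxiliary $\vec{H}_0\vec{g}_0$ terms in $\vec{D}^+(\vec{u}_-,\vec{u}_0)+\vec{D}^-(\vec{u}_0,\vec{u}_+)$;
\item the translation of \eqref{necessary_condition_fluctuation} into \eqref{necessary_condition_form4} via $\psi=\vec{\omega}\cdot\vec{f}-F$;
\item the evaluation of \eqref{F_num_fluctuation}, where the two $\mean{\vec{\omega}\cdot\vec{H}\vec{g}}$ terms cancel and leave exactly \eqref{F_num_form4}.
\end{itemize}

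Your route buys modularity. The same identification of $\vec{D}^\pm$ handles every one of the four forms and their linear combinations with a single proof. Consistency of $F^{\mathrm{num}}$ is automatic from $\vec{D}^\pm(\vec{u},\vec{u})=\vec{0}$. It also makes explicit that the specialized forms are instances of the general Castro et al.\ fluctuation framework of Theorem~\ref{fluctuation_theorem}.

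The paper's direct route is self-contained and keeps the analogy with Tadmor's shuffle condition visible through the entropy potential $\psi$. Its cost is repeating the bookkeeping for each form, in particular the asymmetric terms $\vec{H}^{\mathrm{num}}(\vec{u}_0,\vec{u}_+)\vec{g}_0$ and $\vec{H}^{\mathrm{num}}(\vec{u}_-,\vec{u}_0)\vec{g}_0$, which your reduction sidesteps.
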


\subsubsection{Entropy conservation/stability condition for nonconservative systems}
Given these results, we can state four conditions extending Tadmor's condition in Theorem~\ref{tadmor_theorem} to nonconservative hyperbolic systems (summarized in Table~\ref{table_formulation}).

\begin{theorem}{(First form)}
The FV scheme
\begin{equation}
    \tag{\ref{FV_form1}}
\partial_t \vec{u}_i + \frac{1}{\Delta x_i} \left  ( \vec{f}^{\mathrm{num}}_{i+1/2} - \vec{f}^{\mathrm{num}}_{i-1/2}\right ) + \vec{H}_i \frac{ \jump{\vec{g}}_{i+1/2}+ \jump{\vec{g}}_{i-1/2}}{2 \Delta x_i}  = \vec{0}
\end{equation}
is entropy-conservative/stable for a given entropy pair $(U, F)$ (see Definition~\ref{def_entropy_conservation}) if and only if
\begin{equation}
\label{TadmorDef_form1}
\jump{\vec{\omega}} \cdot \vec{f}^\mathrm{num} -\mean{\vec{\omega} \cdot \vec{H}}\jump{\vec{g}} \leq \jump{\psi},
\end{equation}
where $\vec{\omega} = U'$ are the entropy variables and $\psi = \vec{\omega} \cdot \vec{f} - F$ is the entropy potential.
\end{theorem}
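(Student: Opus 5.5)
The plan is to reduce the statement to Lemma~\ref{algebraic_lemma_nc_form1}, exactly as Theorem~\ref{tadmor_theorem} was reduced to Lemma~\ref{algebraic_lemma}.

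First, I take the inner product of the semi-discrete scheme~\eqref{FV_form1} with $\vec{\omega}_i = U'(\vec{u}_i)$. By Definition~\ref{def_entropy_conservation}, entropy conservation/stability means that there is a consistent $F^{\mathrm{num}}$ with
\begin{equation*}
\vec{\omega}_i \cdot \Bigl( \vec{f}^{\mathrm{num}}_{i+1/2} - \vec{f}^{\mathrm{num}}_{i-1/2} + \tfrac{1}{2}\vec{H}_i \jump{\vec{g}}_{i+1/2} + \tfrac{1}{2}\vec{H}_i \jump{\vec{g}}_{i-1/2} \Bigr) \geq F^{\mathrm{num}}_{i+1/2} - F^{\mathrm{num}}_{i-1/2}.
\end{equation*}
The factor $1/\Delta x_i>0$ cancels, and the sign flips because of the minus sign in $\partial_t \vec{u}_i$.

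Second, I identify $\vec{u}_- = \vec{u}_{i-1}$, $\vec{u}_0 = \vec{u}_i$, $\vec{u}_+ = \vec{u}_{i+1}$. The inequality above is then precisely~\eqref{sufficient_condition_form1}. Since the scheme property must hold for arbitrary grid data, the triple $(\vec{u}_{i-1},\vec{u}_i,\vec{u}_{i+1})$ ranges over all of $Y^3$. So the quantifier ``for all $\vec{u}_-,\vec{u}_0,\vec{u}_+$'' in the lemma is exactly the requirement of the definition.

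Third, I apply Lemma~\ref{algebraic_lemma_nc_form1} with $\vec{\omega}=U'$. It gives equivalence with~\eqref{necessary_condition_form1}. Rewriting that condition with the mean and jump notation, $\frac12(\vec{\omega}_+\cdot\vec{H}_+ + \vec{\omega}_-\cdot\vec{H}_-) = \mean{\vec{\omega}\cdot\vec{H}}$ and $\psi=\vec{\omega}\cdot\vec{f}-F$, yields~\eqref{TadmorDef_form1}. The equality case (entropy conservation) follows from the ``moreover'' part of the lemma, which also identifies $F^{\mathrm{num}}$ via~\eqref{F_num_form1}.

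The main obstacle is that the proof of Lemma~\ref{algebraic_lemma_nc_form1} is omitted, so it has to be supplied. I would follow the proof of Lemma~\ref{algebraic_lemma_nc_form3} verbatim, replacing $\vec{H}^{\mathrm{num}}(\cdot,\cdot)$ by $\vec{H}(\vec{u}_0)$.

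For necessity, set $\vec{u}_0=\vec{u}_-$, which leaves the term $\frac12\vec{\omega}_-\cdot\vec{H}_-\jump{\vec{g}}$, and set $\vec{u}_0=\vec{u}_+$, which leaves $\frac12\vec{\omega}_+\cdot\vec{H}_+\jump{\vec{g}}$. Adding the two inequalities gives~\eqref{necessary_condition_form1}; subtracting them in the equality case gives~\eqref{F_num_form1}.

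For sufficiency, split each nonconservative term as
\begin{equation*}
\tfrac12\vec{\omega}_0\cdot\vec{H}_0\jump{\vec{g}} = \tfrac14(\vec{\omega}_0\cdot\vec{H}_0 + \vec{\omega}_\pm\cdot\vec{H}_\pm)\jump{\vec{g}} \mp \tfrac14\jump{\vec{\omega}\cdot\vec{H}}\jump{\vec{g}},
\end{equation*}
with the sign chosen per interface. Together with the usual Tadmor split of the flux terms, this lets me apply~\eqref{necessary_condition_form1} on each interface. The remaining terms then telescope to $F^{\mathrm{num}}$ given by~\eqref{F_num_form1}.

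The only real care needed is sign bookkeeping in this split. I would check it by setting $\vec{u}_-=\vec{u}_0$ and $\vec{u}_0=\vec{u}_+$ and confirming consistency.
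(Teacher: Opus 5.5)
Your proposal is correct and follows the paper's route: the theorem is the same reduction as Theorem~\ref{tadmor_theorem}, now through Lemma~\ref{algebraic_lemma_nc_form1}, and you prove that lemma (whose proof the paper omits) by adapting the third-form argument with $\vec{H}^{\mathrm{num}}$ replaced by $\vec{H}(\vec{u}_0)$. Your per-interface split, with $-\tfrac14\jump{\vec{\omega}\cdot\vec{H}}\jump{\vec{g}}$ on the right interface and $+\tfrac14\jump{\vec{\omega}\cdot\vec{H}}\jump{\vec{g}}$ on the left, has the correct signs and telescopes exactly to~\eqref{F_num_form1}.
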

\begin{proof}
The proof follows the same steps as in Theorem~\ref{tadmor_theorem} by applying Lemma~\ref{algebraic_lemma_nc_form1}.
\end{proof}

Please note that the condition~\eqref{TadmorDef_form1} for an entropy-conservative/stable method is equivalent to the condition derived by Manzanero~\cite[Eq.~(3.106)]{Manzanero2020AHD}.

\begin{theorem}{(Second form)}
The FV scheme
\begin{equation}
    \tag{\ref{FV_form2}}
\partial_t \vec{u}_i + \frac{1}{\Delta x_i} \left  ( \vec{f}^{\mathrm{num}}_{i+1/2} - \vec{f}^{\mathrm{num}}_{i-1/2}\right ) + \vec{H}_i \frac{ \vec{g}^{\mathrm{num}}_{i+1/2}- \vec{g}^{\mathrm{num}}_{i-1/2}}{\Delta x_i}  = \vec{0}
\end{equation}
is entropy-conservative/stable for a given entropy pair $(U, F)$ (see Definition~\ref{def_entropy_conservation}) if and only if
\begin{equation}
\label{TadmorDef_form2}
	\jump{\vec{\omega}} \cdot \vec{f}^\mathrm{num} +\jump{\vec{\omega} \cdot \vec{H}}\vec{g}^{\mathrm{num}} - \jump{\vec{\omega} \cdot \vec{H} \vec{g}} \leq \jump{\psi},
\end{equation}
where $\vec{\omega} = U'$ are the entropy variables and $\psi = \vec{\omega} \cdot \vec{f} - F$ is the entropy potential.
\end{theorem}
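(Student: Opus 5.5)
The plan mirrors the proof of Theorem~\ref{tadmor_theorem}, with Lemma~\ref{algebraic_lemma_nc_form2} doing the work. First I unfold Definition~\ref{def_entropy_conservation} for the second form~\eqref{FV_form2}. Multiplying the semi-discrete equation in cell $i$ by $\vec{\omega}_i = U'(\vec{u}_i)$ gives
\begin{equation*}
\vec{\omega}_i \cdot \partial_t \vec{u}_i = -\frac{\vec{\omega}_i}{\Delta x_i} \cdot \Bigl( \vec{f}^{\mathrm{num}}_{i+1/2} - \vec{f}^{\mathrm{num}}_{i-1/2} + \vec{H}_i \vec{g}^{\mathrm{num}}_{i+1/2} - \vec{H}_i \vec{g}^{\mathrm{num}}_{i-1/2} \Bigr).
\end{equation*}
Since $\Delta x_i > 0$, entropy conservation/stability is therefore equivalent to the existence of a consistent $F^{\mathrm{num}}$ such that
\begin{equation*}
\vec{\omega}_i \cdot \Bigl( \vec{f}^{\mathrm{num}}_{i+1/2} - \vec{f}^{\mathrm{num}}_{i-1/2} + \vec{H}_i \vec{g}^{\mathrm{num}}_{i+1/2} - \vec{H}_i \vec{g}^{\mathrm{num}}_{i-1/2} \Bigr) \geq F^{\mathrm{num}}_{i+1/2} - F^{\mathrm{num}}_{i-1/2},
\end{equation*}
with equality in the entropy-conservative case.

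Next I identify $\vec{u}_{i-1}, \vec{u}_i, \vec{u}_{i+1}$ with $\vec{u}_-, \vec{u}_0, \vec{u}_+$. Under this identification the inequality above is exactly~\eqref{sufficient_condition_form2}. The scheme should be entropy-preserving for all admissible grid data, so the three states range freely over $Y$ and the quantifier in~\eqref{sufficient_condition_form2} is the correct reading of the definition. The hypotheses of Lemma~\ref{algebraic_lemma_nc_form2} are the consistency conditions $\vec{f}^{\mathrm{num}}(\vec{u},\vec{u}) = \vec{f}(\vec{u})$ and $\vec{g}^{\mathrm{num}}(\vec{u},\vec{u}) = \vec{g}(\vec{u})$, which hold for the scheme. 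The lemma then gives that~\eqref{sufficient_condition_form2} holds if and only if~\eqref{necessary_condition_form2} holds. Writing~\eqref{necessary_condition_form2} with jump notation and $\psi = \vec{\omega}\cdot\vec{f} - F$ turns it into~\eqref{TadmorDef_form2}:
\begin{itemize}
\item the terms $(\vec{\omega}_+ \cdot \vec{H}_+ - \vec{\omega}_- \cdot \vec{H}_-)\vec{g}^{\mathrm{num}}$ become $\jump{\vec{\omega}\cdot\vec{H}}\vec{g}^{\mathrm{num}}$;
\item the subtracted products $\vec{\omega}_\pm \cdot \vec{H}_\pm \vec{g}_\pm$ become $\jump{\vec{\omega}\cdot\vec{H}\vec{g}}$.
\end{itemize}
The equality case carries over by the ``moreover'' part of the lemma, which also yields $F^{\mathrm{num}}$ via~\eqref{F_num_form2}.

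The only substantive step is Lemma~\ref{algebraic_lemma_nc_form2} itself, whose proof the paper omits, so I would verify it by the template of Lemma~\ref{algebraic_lemma_nc_form3}.

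For necessity, I set $\vec{u}_0 = \vec{u}_-$ and then $\vec{u}_0 = \vec{u}_+$. Consistency collapses the terms involving $\vec{g}^{\mathrm{num}}(\vec{u}_\pm,\vec{u}_\pm)$ to $\vec{H}_\pm\vec{g}_\pm$, and adding the two resulting inequalities gives~\eqref{necessary_condition_form2}.

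For sufficiency, I split $\vec{\omega}_0 \cdot \vec{H}_0 \vec{g}^{\mathrm{num}}(\vec{u}_0,\vec{u}_+)$ into a mean part and a jump part,
\begin{equation*}
\vec{\omega}_0 \cdot \vec{H}_0 = \tfrac12\bigl(\vec{\omega}_0\cdot\vec{H}_0 + \vec{\omega}_+\cdot\vec{H}_+\bigr) - \tfrac12\bigl(\vec{\omega}_+\cdot\vec{H}_+ - \vec{\omega}_0\cdot\vec{H}_0\bigr),
\end{equation*}
and do the same at the left interface. I then apply~\eqref{necessary_condition_form2} to the jump parts, together with the $\vec{f}^{\mathrm{num}}$ jumps. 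The leftover terms $\pm\tfrac12\jump{\vec{\omega}\cdot\vec{H}\vec{g}}$ regroup into the $-\mean{\vec{\omega}\cdot\vec{H}\vec{g}}$ contribution in~\eqref{F_num_form2}, so the right-hand side telescopes to $F^{\mathrm{num}}(\vec{u}_0,\vec{u}_+) - F^{\mathrm{num}}(\vec{u}_-,\vec{u}_0)$. I expect this sign bookkeeping to be the only place where care is needed.
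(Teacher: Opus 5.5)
Your proposal is correct and matches the paper's proof: it unfolds Definition~\ref{def_entropy_conservation} exactly as in Theorem~\ref{tadmor_theorem}, invokes Lemma~\ref{algebraic_lemma_nc_form2}, and rewrites the result in jump notation. Your extra check of the omitted lemma (which the paper skips, saying it follows the third-form template) also works. Splitting $\vec{\omega}_0$ and $\vec{\omega}_0\cdot\vec{H}_0$ into mean and jump parts at each interface, and regrouping the leftover $\pm\tfrac12\jump{\vec{\omega}\cdot\vec{H}\vec{g}}$ terms, reproduces exactly the $F^{\mathrm{num}}$ of~\eqref{F_num_form2}, which is consistent.
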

\begin{proof}
The proof follows the same steps as in Theorem~\ref{tadmor_theorem} by applying Lemma~\ref{algebraic_lemma_nc_form2}.
\end{proof}

\begin{theorem}{(Third form)}
    The FV scheme
    \begin{equation}
    \tag{\ref{FV_form3}}
\partial_t \vec{u}_i + \frac{1}{\Delta x_i} \left  ( \vec{f}^{\mathrm{num}}_{i+1/2} - \vec{f}^{\mathrm{num}}_{i-1/2}\right ) + \frac{\vec{H}^{\mathrm{num}}_{i+1/2} \jump{\vec{g}}_{i+1/2} + \vec{H}^{\mathrm{num}}_{i-1/2}  \jump{\vec{g}}_{i-1/2}}{2 \Delta x_i}  = \vec{0},
\end{equation}
is entropy-conservative/stable for a given entropy pair $(U, F)$ (see Definition~\ref{def_entropy_conservation}) if and only if
\begin{equation}
\label{TadmorDef_form3}
	\jump{\vec{\omega}} \cdot \vec{f}^\mathrm{num} -\mean{\vec{\omega}} \cdot \vec{H}^{\mathrm{num}}\jump{\vec{g}} \leq \jump{\psi},
\end{equation}
where $\vec{\omega} = U'$ are the entropy variables and $\psi = \vec{\omega} \cdot \vec{f} - F$ is the entropy potential.
\end{theorem}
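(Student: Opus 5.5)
The plan is to reduce the statement to Lemma~\ref{algebraic_lemma_nc_form3}, in the same way Theorem~\ref{tadmor_theorem} was reduced to Lemma~\ref{algebraic_lemma}.

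First, take the scheme~\eqref{FV_form3} and solve it for $\partial_t \vec{u}_i$. Multiplying by $U'(\vec{u}_i) = \vec{\omega}_i$, Definition~\ref{def_entropy_conservation} (which, by the preceding remark, applies verbatim to this form) says the scheme is entropy-conservative/stable exactly when there is a consistent $F^{\mathrm{num}}$ with
\begin{equation*}
\vec{\omega}_i \cdot \Bigl( \vec{f}^{\mathrm{num}}_{i+1/2} - \vec{f}^{\mathrm{num}}_{i-1/2} + \tfrac{1}{2}\vec{H}^{\mathrm{num}}_{i+1/2}\jump{\vec{g}}_{i+1/2} + \tfrac{1}{2}\vec{H}^{\mathrm{num}}_{i-1/2}\jump{\vec{g}}_{i-1/2} \Bigr) \geq F^{\mathrm{num}}_{i+1/2} - F^{\mathrm{num}}_{i-1/2}.
\end{equation*}
The factor $\Delta x_i > 0$ has been cancelled, and the sign reversal comes from the minus sign in front of the spatial terms.

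Second, observe that this inequality is required for all cells $i$ and all grid data. Since the states $\vec{u}_{i-1}, \vec{u}_i, \vec{u}_{i+1}$ can be arbitrary elements of $Y$, this is precisely the three-point condition~\eqref{sufficient_condition_form3} with $\vec{u}_- = \vec{u}_{i-1}$, $\vec{u}_0 = \vec{u}_i$, $\vec{u}_+ = \vec{u}_{i+1}$. The consistency assumptions $\vec{f}^{\mathrm{num}}(\vec{u},\vec{u}) = \vec{f}(\vec{u})$ and $\vec{H}^{\mathrm{num}}(\vec{u},\vec{u}) = \vec{H}(\vec{u})$ are the hypotheses of Lemma~\ref{algebraic_lemma_nc_form3}.

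Third, the lemma gives that this holds if and only if~\eqref{necessary_condition_form3} holds. Rewriting~\eqref{necessary_condition_form3} in jump/mean notation, with $\psi = \vec{\omega}\cdot\vec{f} - F$, gives exactly~\eqref{TadmorDef_form3}. In the equality case, the lemma also supplies $F^{\mathrm{num}}$ via~\eqref{F_num_form3}.

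The only step needing care is the equivalence between ``for all grids and all cells'' and ``for all triples in $Y$''. One direction is immediate. For the other, any triple can be realized locally by a grid configuration, and the entropy statement is required for arbitrary data. I expect no real obstacle here; the substantive algebra is already contained in Lemma~\ref{algebraic_lemma_nc_form3}.
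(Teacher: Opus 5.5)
Your proposal is correct and follows the paper's own argument. The paper likewise repeats the proof of Theorem~\ref{tadmor_theorem}: cancel $\Delta x_i$, flip the inequality, identify the cellwise inequality with the three-point condition~\eqref{sufficient_condition_form3}, and then invoke Lemma~\ref{algebraic_lemma_nc_form3}, whose two-point condition~\eqref{necessary_condition_form3} is exactly~\eqref{TadmorDef_form3} in jump/mean notation. Your explicit remark about passing from ``all grids and all cells'' to ``all triples in $Y$'' is a point the paper leaves implicit, but it does not change the route.
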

\begin{proof}
The proof follows the same steps as in Theorem~\ref{tadmor_theorem} by applying Lemma~\ref{algebraic_lemma_nc_form3}.
\end{proof}

\begin{theorem}{(Fourth form)}
    The FV scheme
    \begin{equation}
    \tag{\ref{FV_form4}}
\partial_t \vec{u}_i + \frac{1}{\Delta x_i} \left  ( \vec{f}^{\mathrm{num}}_{i+1/2} - \vec{f}^{\mathrm{num}}_{i-1/2}\right ) + \frac{\left (\vec{H} \vec{g} \right ) ^{\mathrm{num}}_{i+1/2} - \left (\vec{H} \vec{g} \right ) ^{\mathrm{num}}_{i-1/2}}{\Delta x_i} -\vec{g}_i \frac{\vec{H}^{\mathrm{num}}_{i+1/2} - \vec{H}^{\mathrm{num}}_{i-1/2}}{\Delta x_i}  = \vec{0},
\end{equation}
is entropy-conservative/stable for a given entropy pair $(U, F)$ (see Definition~\ref{def_entropy_conservation}) if and only if
\begin{equation}
\label{TadmorDef_form4}
	\jump{\vec{\omega}} \cdot \vec{f}^\mathrm{num} + \jump{\vec{\omega}} \cdot \left (\vec{H} \vec{g} \right )^{\mathrm{num}} - \Bigl (\vec{\omega}_+ \cdot \vec{H}^{\mathrm{num}} \vec{g}_+ - \vec{\omega}_- \cdot \vec{H}^{\mathrm{num}} \vec{g}_-\Bigr ) \leq \jump{\psi},
\end{equation}
where $\vec{\omega} = U'$ are the entropy variables and $\psi = \vec{\omega} \cdot \vec{f} - F$ is the entropy potential.
\end{theorem}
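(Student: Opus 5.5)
The plan is to argue exactly as in the proof of Theorem~\ref{tadmor_theorem}, this time invoking Lemma~\ref{algebraic_lemma_nc_form4}.

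First, we multiply the semi-discretization~\eqref{FV_form4} by $\vec{\omega}_i = U'(\vec{u}_i)$. By Definition~\ref{def_entropy_conservation}, entropy conservation/stability is then equivalent to the three-point inequality
\begin{equation*}
\vec{\omega}_i \cdot \Bigl( \vec{f}^{\mathrm{num}}_{i+1/2} - \vec{f}^{\mathrm{num}}_{i-1/2} + (\vec{H}\vec{g})^{\mathrm{num}}_{i+1/2} - (\vec{H}\vec{g})^{\mathrm{num}}_{i-1/2} - \vec{H}^{\mathrm{num}}_{i+1/2}\vec{g}_i + \vec{H}^{\mathrm{num}}_{i-1/2}\vec{g}_i \Bigr) \ge F^{\mathrm{num}}_{i+1/2} - F^{\mathrm{num}}_{i-1/2}
\end{equation*}
for some consistent $F^{\mathrm{num}}$. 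This holds with equality in the conservative case. The inequality must hold for arbitrary neighboring states $\vec{u}_{i-1}, \vec{u}_i, \vec{u}_{i+1} \in Y$.

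Next, we identify $(\vec{u}_{i-1},\vec{u}_i,\vec{u}_{i+1})$ with $(\vec{u}_-,\vec{u}_0,\vec{u}_+)$. The inequality above is then precisely~\eqref{sufficient_condition_form4}. The consistency assumptions on $\vec{f}^{\mathrm{num}}$, $\vec{H}^{\mathrm{num}}$ and $(\vec{H}\vec{g})^{\mathrm{num}}$ are those required in Lemma~\ref{algebraic_lemma_nc_form4}. That lemma therefore shows that the inequality holds if and only if~\eqref{necessary_condition_form4} holds.

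Finally, we rewrite~\eqref{necessary_condition_form4} in jump notation. The combination $(\vec{\omega}_+ - \vec{\omega}_-)\cdot(\dots)$ becomes $\jump{\vec{\omega}} \cdot(\dots)$, and the right-hand side becomes $\jump{\psi}$ with $\psi = \vec{\omega}\cdot\vec{f} - F$. This yields exactly~\eqref{TadmorDef_form4}. The equality statement of the lemma transfers the entropy-conservative case directly.

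The only delicate point is the quantifier in the "only if" direction. Definition~\ref{def_entropy_conservation} must be read as requiring the cell inequality for all admissible neighboring states. Otherwise the choices $\vec{u}_0=\vec{u}_\pm$ used inside the lemma would not be available. This is the same reading already adopted in Theorem~\ref{tadmor_theorem}.

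If Lemma~\ref{algebraic_lemma_nc_form4} were instead to be proven here, we would mirror the proof of Lemma~\ref{algebraic_lemma_nc_form3}. Necessity follows by setting $\vec{u}_0=\vec{u}_-$ and $\vec{u}_0=\vec{u}_+$, using $(\vec{H}\vec{g})^{\mathrm{num}}(\vec{u},\vec{u}) = \vec{H}\vec{g}$ and $\vec{H}^{\mathrm{num}}(\vec{u},\vec{u})\vec{g}(\vec{u})$ so that those terms cancel, and then adding the two inequalities. Sufficiency follows from the splitting $\vec{\omega}_0 = \tfrac12(\vec{\omega}_0+\vec{\omega}_\pm) \mp \tfrac12(\vec{\omega}_\pm - \vec{\omega}_0)$ applied at each interface, together with~\eqref{F_num_form4}. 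The bookkeeping of the $\vec{H}^{\mathrm{num}}\vec{g}_0$ terms is the one step where a sign error is most likely.
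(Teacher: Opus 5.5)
Your proposal is correct and is essentially the paper's own proof: the same reduction as in Theorem~\ref{tadmor_theorem}, followed by an application of Lemma~\ref{algebraic_lemma_nc_form4}. One small slip in your optional sketch of that lemma: the splitting should read $\vec{\omega}_0 = \tfrac12(\vec{\omega}_0+\vec{\omega}_\pm) - \tfrac12(\vec{\omega}_\pm - \vec{\omega}_0)$ with a minus sign at both interfaces, because with your $\mp$ the left-interface version evaluates to $\vec{\omega}_-$ rather than $\vec{\omega}_0$.
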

\begin{proof}
The proof follows the same steps as in Theorem~\ref{tadmor_theorem} by applying Lemma~\ref{algebraic_lemma_nc_form4}.
\end{proof}

\subsection{A generalized semi-discretization for nonconservative systems}

It can be useful to consider linear combinations of different forms of the semi-discretizations introduced above, e.g., to obtain generalized split forms.
Here, we focus on a linear combination of the first and third form, i.e.,
\begin{equation}
    \partial_t \vec{u}_i
    + \frac{1}{\Delta x_i} \left  ( \vec{f}^{\mathrm{num}}_{i+1/2} - \vec{f}^{\mathrm{num}}_{i-1/2}\right )
    + \alpha \frac{\vec{H}^{\mathrm{num}}_{i+1/2} \jump{\vec{g}}_{i+1/2} + \vec{H}^{\mathrm{num}}_{i-1/2}  \jump{\vec{g}}_{i-1/2}}{2 \Delta x_i}
    + (1 -\alpha) \vec{H}_i \frac{\jump{\vec{g}}_{i+1/2} + \jump{\vec{g}}_{i-1/2}}{ 2 \Delta x_i} = \vec{0},
\label{FV_convex}
\end{equation}
where $\alpha \in \mathbb{R}$.
In this case, we have the following result.
\begin{theorem}\label{th:EC_linear_combination}
The FV scheme~\eqref{FV_convex} is entropy-conservative/stable for a given entropy pair $(U, F)$ (see Definition~\ref{def_entropy_conservation}) and $\alpha \in \mathbb{R}$ if and only if
\begin{equation}
\label{TadmorDef_convex}
	\jump{\vec{\omega}} \cdot \vec{f}^\mathrm{num}
    - \alpha \mean{\vec{\omega}}\cdot\vec{H}^{\mathrm{num}}\jump{\vec{g}}
    - (1-\alpha) \mean{\vec{\omega} \cdot\vec{H}} \jump{\vec{g}}
    \leq \jump{\psi},
\end{equation}
where $\vec{\omega} = U'$ are the entropy variables and $\psi = \vec{\omega} \cdot \vec{f} - F$ is the entropy potential.
\end{theorem}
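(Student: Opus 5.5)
The plan is to reduce the statement to Lemma~\ref{algebraic_lemma_nc_form3} by absorbing the convex combination into a single effective interface matrix. Define the two-point function
\begin{equation*}
\widetilde{\vec{H}}^{\mathrm{num}}(\vec{u}_-, \vec{u}_+) = \alpha \vec{H}^{\mathrm{num}}(\vec{u}_-, \vec{u}_+),
\end{equation*}
and note that the first-form contribution $(1-\alpha)\vec{H}_i(\jump{\vec{g}}_{i+1/2}+\jump{\vec{g}}_{i-1/2})/(2\Delta x_i)$ is not of interface type, since it uses the cell value $\vec{H}_i$. So a direct substitution into the third form does not work. Instead, I will repeat the argument of Lemma~\ref{algebraic_lemma_nc_form3} for the combined scheme. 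By linearity, this amounts to adding the corresponding steps of the (omitted but analogous) proofs of Lemmas~\ref{algebraic_lemma_nc_form1} and~\ref{algebraic_lemma_nc_form3} with weights $1-\alpha$ and $\alpha$.

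First, following the proof of Theorem~\ref{tadmor_theorem}, I rewrite entropy conservation/stability in the sense of Definition~\ref{def_entropy_conservation} as a three-point inequality. This is \eqref{sufficient_condition_form3} with the nonconservative term replaced by
\begin{equation*}
\tfrac{\alpha}{2}\bigl(\vec{H}^{\mathrm{num}}(\vec{u}_0,\vec{u}_+)(\vec{g}_+-\vec{g}_0)+\vec{H}^{\mathrm{num}}(\vec{u}_-,\vec{u}_0)(\vec{g}_0-\vec{g}_-)\bigr)+\tfrac{1-\alpha}{2}\vec{H}_0(\vec{g}_+-\vec{g}_-).
\end{equation*}

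For necessity, I set $\vec{u}_0=\vec{u}_-$ and then $\vec{u}_0=\vec{u}_+$, using consistency $\vec{f}^{\mathrm{num}}(\vec{u},\vec{u})=\vec{f}(\vec{u})$ and $\jump{\vec{g}}=0$ on degenerate interfaces, and add the two inequalities. The $\alpha$-terms give $\alpha\mean{\vec{\omega}}\cdot\vec{H}^{\mathrm{num}}\jump{\vec{g}}$, exactly as in Lemma~\ref{algebraic_lemma_nc_form3}. The $(1-\alpha)$-terms give $\tfrac{1-\alpha}{2}(\vec{\omega}_-\cdot\vec{H}_-+\vec{\omega}_+\cdot\vec{H}_+)\jump{\vec{g}}=(1-\alpha)\mean{\vec{\omega}\cdot\vec{H}}\jump{\vec{g}}$, as in Lemma~\ref{algebraic_lemma_nc_form1}. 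Together these yield \eqref{TadmorDef_convex}.

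For sufficiency, I define the candidate numerical entropy flux as the corresponding weighted combination of \eqref{F_num_form3} and \eqref{F_num_form1}:
\begin{equation*}
F^{\mathrm{num}}=\mean{F}+\mean{\vec{\omega}}\cdot\vec{f}^{\mathrm{num}}-\mean{\vec{\omega}\cdot\vec{f}}-\tfrac{\alpha}{4}\jump{\vec{\omega}}\cdot\vec{H}^{\mathrm{num}}\jump{\vec{g}}-\tfrac{1-\alpha}{4}\jump{\vec{\omega}\cdot\vec{H}}\jump{\vec{g}}.
\end{equation*}
This is consistent because every jump vanishes on the diagonal. I then reproduce the chain of equalities from the proof of Lemma~\ref{algebraic_lemma_nc_form3}, splitting $\vec{\omega}_0$ into means and half-jumps over each interface. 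For the $\alpha$-part this works unchanged. For the $(1-\alpha)$-part, I write
\begin{equation*}
\vec{\omega}_0\cdot\vec{H}_0=\tfrac{1}{2}(\vec{\omega}_0\cdot\vec{H}_0+\vec{\omega}_\pm\cdot\vec{H}_\pm)\mp\tfrac{1}{2}(\vec{\omega}_\pm\cdot\vec{H}_\pm-\vec{\omega}_0\cdot\vec{H}_0)
\end{equation*}
on each interface. This produces $\mean{\vec{\omega}\cdot\vec{H}}\jump{\vec{g}}$ terms, which combine with the $\vec{f}^{\mathrm{num}}$ and $\alpha$ pieces into exactly the left-hand side of \eqref{TadmorDef_convex} on each interface. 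The leftover $\jump{\vec{\omega}\cdot\vec{H}}\jump{\vec{g}}/4$ terms telescope into the difference of $F^{\mathrm{num}}$.

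Applying \eqref{TadmorDef_convex} on both interfaces $(\vec{u}_-,\vec{u}_0)$ and $(\vec{u}_0,\vec{u}_+)$ gives the three-point inequality. Equality propagates in the EC case exactly as in the lemmas.

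The main obstacle is the bookkeeping of the $(1-\alpha)$-part in the sufficiency step. There the cell-centered $\vec{H}_0$ must be redistributed between the two interfaces so that each interface sees only a symmetric two-point quantity. I will check carefully that the signs of the half-jump remainders match $-\tfrac{1-\alpha}{4}\jump{\vec{\omega}\cdot\vec{H}}\jump{\vec{g}}$ at both the left and right interface, so that their difference is precisely $F^{\mathrm{num}}(\vec{u}_0,\vec{u}_+)-F^{\mathrm{num}}(\vec{u}_-,\vec{u}_0)$.
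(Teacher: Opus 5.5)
Your proposal is correct and follows the paper's route: the paper's proof just combines the arguments of Lemmas~\ref{algebraic_lemma_nc_form1} and~\ref{algebraic_lemma_nc_form3} with weights $1-\alpha$ and $\alpha$, and your $F^{\mathrm{num}}$ is exactly the entropy flux the paper writes out later in its SBP entropy proof. One sign slip needs fixing: in your splitting of $\vec{\omega}_0\cdot\vec{H}_0$, the sign in front of $\tfrac{1}{2}(\vec{\omega}_\pm\cdot\vec{H}_\pm-\vec{\omega}_0\cdot\vec{H}_0)$ must be a minus in both cases, since with the lower choice as written the right-hand side equals $\vec{\omega}_-\cdot\vec{H}_-$ rather than $\vec{\omega}_0\cdot\vec{H}_0$; with the corrected identity, written in left-to-right jumps, the remainders are $-\tfrac{1-\alpha}{4}\jump{\vec{\omega}\cdot\vec{H}}\jump{\vec{g}}$ at the right interface and $+\tfrac{1-\alpha}{4}\jump{\vec{\omega}\cdot\vec{H}}\jump{\vec{g}}$ at the left one, and these telescope into $F^{\mathrm{num}}(\vec{u}_0,\vec{u}_+)-F^{\mathrm{num}}(\vec{u}_-,\vec{u}_0)$ as you claim.
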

\begin{proof}
    The proof follows by applying Lemma~\ref{algebraic_lemma_nc_form1} and~\ref{algebraic_lemma_nc_form3}.
\end{proof}
\begin{remark}
    In general, the parameter $\alpha$ represents a degree of freedom when constructing numerical fluxes.
    In contrast to the case of hyperbolic conservation laws with (strictly) convex entropy, there may be no solution to the characterizing condition~\eqref{TadmorDef_convex} for a given $\alpha \in \mathbb{R}$, or even for any $\alpha$.
    If there is a solution, it may exist for several choices of $\alpha$.
    We will explore this in some examples in Section~\ref{sec:numerical_fluxes}.
\end{remark}

\begin{remark}
    Although the condition can be applied by treating all nonconservative terms in a uniform manner, it is also possible to discretize different nonconservative terms using any of the four options, or a combination of them.
    Thus, each nonconservative term can be associated to some parameter $\alpha_j \in \mathbb{R}$. An example is given in Section~\ref{sec:numerical_fluxes}.
\end{remark}

\begin{remark}
    When $\alpha = 0$, the condition~\eqref{TadmorDef_convex} becomes fully specified for the nonconservative terms; the only remaining degree of freedom is the choice of the numerical flux $\vec{f}^\mathrm{num}$ associated with the conservative part of the system, when present.
    In contrast, the freedom obtained by considering $\vec{H}^\mathrm{num}$ in~\eqref{TadmorDef_form2}, or the more general one in~\eqref{TadmorDef_form4} allows constructing well-balanced methods for nonlinear steady states \cite{WARUSZEWSKI2022111507,artiano2025structurepreservinghighordermethodscompressible}.
\end{remark}

\section{Constructing entropy-conservative/stable fluxes}
\label{sec:numerical_fluxes}
In this section, we provide several examples to derive entropy-conservative/stable numerical fluxes for nonconservative hyperbolic systems of equations.
We demonstrate how existing fluxes can be recovered within our framework, and how novel fluxes can be constructed systematically.

\subsection{Variable-coefficient advection equation}
\label{sec:fluxes_variable_coefficient_advection}
We first consider the variable-coefficient linear advection equation
\begin{equation}
\partial_t u(t,x) + a(x) \partial_x u(t,x) = 0,
\label{eq_adv}
\end{equation}
with positive speed $a(x) \ge a_0 > 0$ and compatible initial and boundary conditions.
This equation can be written in the form \eqref{nonconservative_system} with $f = 0$, $H = a$, and $g = u$.
We consider the entropy pair~\cite{ranocha2018generalised,manzanero2017insights}
\begin{equation}
U = \frac{u^2}{a}, \quad F = u^2.
\end{equation}
Thus, the entropy variable $\omega$ and the entropy potential $\psi$ are
\begin{equation}
\omega = 2\frac{u}{a}, \quad \psi = - F = -u^2.
\end{equation}
The entropy conservation condition~\eqref{TadmorDef_convex} with $\alpha = 0$ leads to
\begin{equation}
     -\mean{\omega H}\jump{g}
    =
    -2 \mean{u}\jump{u}
    =
    - \jump{u^2}
    =
    \jump{\psi} = -2 \jump{u}\mean{u},
\end{equation}
which is always satisfied.
This corresponds effectively to the construction of energy-conservative methods in~\cite{ranocha2018generalised}.
The resulting semi-discretization is
\begin{equation}
\partial_t u_i + a_i \frac{\jump{u}_{i+1/2} + \jump{u}_{i-1/2}}{2 \Delta x_i} = \partial_t u_i + a_i \frac{u_{i+1} - u_{i-1}}{2 \Delta x_i} = 0.
\label{eq_adv_scheme}
\end{equation}

\subsection{Coupled Burgers' equation}
Following Castro et al.~\cite{Castro2013}, we consider the
nonconservative hyperbolic system
\begin{equation}
\label{eq:coupled_burgers}
\begin{aligned}
    & \partial_t u + u \partial_x (u + v) = 0,\\
    & \partial_t v + v \partial_x (u + v) = 0.
\end{aligned}
\end{equation}
This system is of the form~\eqref{nonconservative_system} with
\begin{equation}
    \vec{u} = \begin{pmatrix} u \\ v \end{pmatrix},\quad
    \vec{f} = \vec{0}, \quad
    \vec{H} = \begin{pmatrix} u & 0 \\ 0 & v \end{pmatrix}, \quad
    \vec{g} = \begin{pmatrix} u+v \\ u+v \end{pmatrix}.
\end{equation}
By summing the two equations, we observe that $q := u + v$ satisfies Burgers' equation~\cite{Castro2013}
\begin{equation}\label{eq_burgers}
\partial_t q + \partial_x\!\left(\frac{q^2}{2}\right) = 0.
\end{equation}
Therefore, the coupled Burgers' equation \eqref{eq:coupled_burgers} admits the entropy pair~\cite{Castro2013}
\begin{equation}
U = q^2/2, \quad F = q^3/3.
\end{equation}
Hence, the entropy variables and the entropy potential are
\begin{equation}
\vec{\omega} = (q, q),\quad \psi = - q^3/3.
\end{equation}
The EC condition~\eqref{TadmorDef_convex} is equivalent to
\begin{equation}
    \label{eq:EC_condition_coupled_burgers}
\begin{aligned}
    &\qquad
    \alpha \mean{\vec{\omega}} \cdot \vec{H}^{\mathrm{num}} \jump{\vec{g}}
    + (1-\alpha) \mean{\vec{\omega} \cdot \vec{H}} \jump{\vec{g}}
    \\
    &=
    \alpha \mean{u+v} \left( u^{\mathrm{num}} + v^{\mathrm{num}} \right) \jump{u+v}
    + (1-\alpha)  \mean{(u+v) u} \jump{u+v}
    + (1-\alpha)  \mean{(u+v) v} \jump{u+v}
    \\
    &=
    \frac{\jump{(u+v)^3}}{3}
    =
    -\jump{\psi}.
\end{aligned}
\end{equation}
The right-hand side of~\eqref{eq:EC_condition_coupled_burgers} can be expanded as
\begin{equation}
\frac{\jump{(u+v)^3}}{3} = \frac{\jump{u+v}}{3} \left((u_+ + v_+)^2 + (u_+ + v_+)(u_- + v_-) + (u_-+v_-)^2\right).
\end{equation}
Thus, the EC condition becomes
\begin{multline}
    \alpha \mean{u+v} \left( u^{\mathrm{num}} + v^{\mathrm{num}} \right) + (1-\alpha) \mean{(u+v)^2}
    \\
    = \frac{1}{3}\left((u_+ + v_+)^2 + (u_+ + v_+)(u_- + v_-) + (u_-+v_-)^2\right).
\end{multline}
Choosing arithmetic means $u^{\mathrm{num}} = \mean{u}$ and $v^{\mathrm{num}} = \mean{v}$, we obtain
\begin{equation}
    \alpha \mean{u+v}^2 + (1-\alpha) \mean{(u+v)^2}
    = \frac{1}{3}\left((u_+ + v_+)^2 + (u_+ + v_+)(u_- + v_-) + (u_-+v_-)^2\right).
\end{equation}
This condition is satisfied if and only if $\alpha = 2/3$.
Therefore, the numerical fluxes determined by
\begin{equation}
    u^{\mathrm{num}} = \mean{u}, \quad
    v^{\mathrm{num}} = \mean{v}, \quad
    \alpha = 2/3,
\end{equation}
are entropy-conservative. The resulting finite-volume discretization can be written as
\begin{equation}
\begin{aligned}
    &\partial_t u_i
    + \frac{2}{3}\frac{\mean{u}_{i+1/2}\jump{u+v}_{i+1/2} + \mean{u}_{i-1/2}\jump{u+v}_{i-1/2}}{2 \Delta x}
    + \frac{u_i}{3}\frac{\jump{u+v}_{i+1/2} + \jump{u+v}_{i-1/2}}{2\Delta x}
    = 0,
    \\
    & \partial_t v_i
    + \frac{2}{3}\frac{\mean{v}_{i+1/2}\jump{u+v}_{i+1/2} + \mean{v}_{i-1/2}\jump{u+v}_{i-1/2}}{2 \Delta x}
    + \frac{v_i}{3}\frac{\jump{u+v}_{i+1/2} + \jump{u+v}_{i-1/2}}{2 \Delta x}
    = 0,
\end{aligned}
\end{equation}
which is equivalent to the scheme presented by Castro et al.~\cite{Castro2013}.
It reduces to the standard split-form discretization of Burgers' equations~\eqref{eq_burgers}, which is obtained using the uniquely determined EC flux.

\subsection{General polynomial equation}
\label{sec:general_polynomial_equation}
A generalization of Burgers' equation involves a monomial flux of degree $k \in \mathbb{N}$.
For $k > 2$, we can write the equation in different nonconservative forms as
\begin{equation}
\partial_t u + u^m \partial_x u^n = 0, \quad m, n \in \mathbb{N}, \; m + n = k.
\end{equation}
The ansatz \eqref{FV_convex} becomes
\begin{equation}\label{eq_poly}
\partial_t u_i + \alpha \frac{h^{\mathrm{num}}_{i+1/2} \jump{u^n}_{i+1/2} + h^{\mathrm{num}}_{i-1/2}\jump{u^n}_{i-1/2} }{2 \Delta x} + \left (1- \alpha \right ) \frac{u^m_i \jump{u^n}_{i+1/2} + u^m_i \jump{u^n}_{i-1/2}}{2 \Delta x} = 0,
\end{equation}
where
\begin{equation}
h^{\mathrm{num}}_{i+1/2}= h^{\mathrm{num}}(u_i, u_{i+1} ) = h^{\mathrm{num}}(u_{i+1}, u_{i} ), \quad h^{\mathrm{num}}(u, u) = u^m,
\end{equation}
for consistency and symmetry of the fluxes. We seek numerical fluxes $h^{\mathrm{num}}$ and a parameter $\alpha$ such that the scheme preserves the entropy given by
\begin{equation}
    U = \frac{u^2}{2}, \quad
    F = \frac{n}{m+n+1}u^{m+n+1}, \quad
    \psi = -F, \quad
    \omega = u.
\end{equation}
Hence, the entropy conservation condition~\eqref{TadmorDef_convex} becomes
\begin{equation}
    \alpha \mean{u} h^{\mathrm{num}} \jump{u^n} + (1-\alpha) \mean{u^{m+1}} \jump{u^n} = \frac{n}{m+n+1} \jump{u^{m+n+1}}.
\end{equation}
This shows that there does not always need to be a solution.
Indeed, if $m, n \ge 2$ are both even, we can choose $u_\pm = \pm 1$ to get a contradiction, since the left-hand side is zero while the right-hand side is not.

However, we can interpret the product $h^{\mathrm{num}}\jump{u^n}$ appearing in the semi-discretization~\eqref{eq_poly} as a fluctuation, and derive an expression for it directly.
This reduces to the polynomial division
\begin{equation}
h^{\mathrm{num}} \jump{u^n} = \frac{1}{\alpha \mean{u}} \left ( \frac{n}{m+n+1} \jump{u^{m+n+1}} - (1-\alpha) \mean{u^{m+1}} \jump{u^n} \right ) = \frac{P(u_-, u_+)}{\alpha \mean{u}},
\label{hnum_monomial}
\end{equation}
where we defined the polynomial $P(u_-, u_+)$.
If $n$ is even,~\eqref{hnum_monomial} can only be satisfied for $u_- = u_+ \neq 0$ if $m$ is odd, since otherwise the left-hand side would be zero while the right-hand side is not. On the other hand, if $n$ is odd, the numerical flux is well-behaved and we can always divide~\eqref{hnum_monomial} by $\jump{u^n}$.
In particular, in the case where $n$ is odd and $m$ is even, we obtain the compatibility condition
\begin{equation}
\alpha = \frac{m+1}{m+n+1},
\end{equation}
which we use from now on, for simplicity, to cover the cases where $n$ is odd and $m \in \mathbb{N}$.
Polynomial long division yields
\begin{equation}
h^{\mathrm{num}} \jump{u^n} = \frac{2 n}{m+1} \left(\sum_{k=0}^{m+n} (-1)^k u_+^{m+n-k}u_-^{k} - \mean{u^{m+1}} \sum_{k=0}^{n-1} (-1)^k u_+^{n-1-k}u_-^k\right),
\end{equation}
which is antisymmetric and consistent, and thus we have proved the following theorem.
\begin{theorem}
Given $m \in \mathbb{N}$ and $n$ odd. The semi-discretization~\eqref{eq_poly} is entropy conservative if $\alpha = \frac{m+1}{m+n+1}$ and the fluctuation $h^{\mathrm{num}} \jump{u^n}$ is given by
\begin{equation}\label{ec_1_monomial}
h^{\mathrm{num}} \jump{u^n} = \frac{2 n}{m+1} \left(\sum_{k=0}^{m+n} (-1)^k u_+^{m+n-k}u_-^{k} - \mean{u^{m+1}} \sum_{k=0}^{n-1} (-1)^k u_+^{n-1-k}u_-^k\right).
\end{equation}
\end{theorem}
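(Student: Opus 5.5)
The plan is to verify the entropy-conservation condition of Theorem~\ref{th:EC_linear_combination} directly, with equality. The product $h^{\mathrm{num}}\jump{u^n}$ is treated as a single fluctuation $D(u_-,u_+)$ that replaces $\vec{H}^{\mathrm{num}}\jump{\vec{g}}$. Nothing in the proofs of Lemmas~\ref{algebraic_lemma_nc_form1} and~\ref{algebraic_lemma_nc_form3} uses a factorized structure of this term. Those proofs only use three facts: the term is evaluated at the two states of an interface, it vanishes for $u_-=u_+$, and it enters the scheme with the weight $\tfrac12$ at each interface. So, by Theorem~\ref{th:EC_linear_combination} (with $f=0$, $\omega=u$, $\psi=-F$), it suffices to show that for all $u_\pm$
\begin{equation*}
\alpha \mean{u}\, D(u_-,u_+) + (1-\alpha)\mean{u^{m+1}}\jump{u^n} = \frac{n}{m+n+1}\jump{u^{m+n+1}},
\qquad \alpha = \frac{m+1}{m+n+1}.
\end{equation*}
I will also check that $D(u,u)=0$, so that the scheme is consistent. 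This follows by evaluating~\eqref{ec_1_monomial} at $u_+=u_-$ and is where the restriction on the parities enters.

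The key step is to multiply~\eqref{ec_1_monomial} by $\alpha\mean{u} = \frac{m+1}{m+n+1}\,\frac{u_++u_-}{2}$. The prefactors combine to $\frac{n}{m+n+1}(u_++u_-)$. I then use the elementary telescoping identity
\begin{equation*}
(u_++u_-)\sum_{k=0}^{N}(-1)^k u_+^{N-k}u_-^k = u_+^{N+1} + (-1)^N u_-^{N+1}.
\end{equation*}
It is applied once with $N=m+n$, which produces the term $\frac{n}{m+n+1}\jump{u^{m+n+1}}$, and once with $N=n-1$. The second application should produce the term $\frac{n}{m+n+1}\mean{u^{m+1}}\jump{u^n}$, which cancels $(1-\alpha)\mean{u^{m+1}}\jump{u^n}$ because $1-\alpha = \frac{n}{m+n+1}$. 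After this cancellation the condition above holds identically.

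The main obstacle is the sign and parity bookkeeping in these two applications of the identity. The alternating sums must be read as the quotients $\jump{u^{N+1}}/(u_++u_-)$, which are polynomials exactly when $N$ is odd. The case $n$ odd and $m$ even is where the paper's compatibility argument for~\eqref{hnum_monomial} applies. I will carry out the computation there first and check carefully which sign the factor $(-1)^N$ contributes for $N=m+n$ and for $N=n-1$. If a sign disagrees, I would replace the corresponding alternating sum by the quotient polynomial $\sum_{k=0}^{N} u_+^{N-k}u_-^k$ or its alternating counterpart, whichever equals $\jump{u^{N+1}}/\jump{u}$ or $\jump{u^{N+1}}/(u_++u_-)$. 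I would then check that the combination still has a removable singularity at $u_++u_-=0$, which is the content of the polynomial division in~\eqref{hnum_monomial}.

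Finally, I will confirm that $D$ is antisymmetric in $(u_-,u_+)$ and vanishes on the diagonal, so that the hypotheses of Lemmas~\ref{algebraic_lemma_nc_form1} and~\ref{algebraic_lemma_nc_form3} hold. The numerical entropy flux is then given by the corresponding combination of~\eqref{F_num_form1} and~\eqref{F_num_form3}.
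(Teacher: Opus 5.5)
Your reduction is sound: if you treat $D(u_-,u_+)=h^{\mathrm{num}}\jump{u^n}$ as a single interface fluctuation, the proofs of Lemmas~\ref{algebraic_lemma_nc_form1} and~\ref{algebraic_lemma_nc_form3} go through verbatim. So entropy conservation is equivalent to $\alpha\mean{u}D+(1-\alpha)\mean{u^{m+1}}\jump{u^n}=\frac{n}{m+n+1}\jump{u^{m+n+1}}$.

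The step that fails is the second use of your telescoping identity. Since $n$ is odd, $N=n-1$ is even, so $(u_++u_-)\sum_{k=0}^{n-1}(-1)^k u_+^{n-1-k}u_-^k=u_+^n+u_-^n=2\mean{u^n}$, not $\jump{u^n}$. For odd $m$ the first use ($N=m+n$ even) likewise gives $u_+^{m+n+1}+u_-^{m+n+1}$. Carrying this through with the displayed $D$, the left-hand side of the identity minus its right-hand side equals $-\frac{n}{m+n+1}u_-^n\bigl(u_+^{m+1}+(-1)^m u_-^{m+1}\bigr)\not\equiv 0$.

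So the fluctuation as displayed is not entropy conservative:
\begin{itemize}
\item for $m=2$, $n=1$ it is not even consistent, since $D(1,1)=-\tfrac23$;
\item for $m=n=1$ it equals the symmetric $\tfrac12(u_+-u_-)^2$ rather than the Burgers fluctuation $\mean{u}\jump{u}$.
\end{itemize}
No sign bookkeeping rescues the verification. Your fallback cannot either: for odd $n$ there is no polynomial $Q$ with $(u_++u_-)Q=\jump{u^n}$ (evaluate at $u_+=-u_-\neq 0$). The paper obtains the formula by solving the identity for $D=P/(\alpha\mean{u})$ and doing the long division. That route is right, but the printed quotient divides $\jump{u^n}$ by $u_++u_-$ as if $n$ were even.

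The missing idea is the discrete product rule $\jump{u^{m+n+1}}=\mean{u^{m+1}}\jump{u^n}+\jump{u^{m+1}}\mean{u^n}$. With $1-\alpha=\frac{n}{m+n+1}$, it reduces the identity to $\alpha\mean{u}D=\frac{n}{m+n+1}\jump{u^{m+1}}\mean{u^n}$. Now your identity with $N=n-1$ is exactly what is needed:
\begin{equation*}
h^{\mathrm{num}}\jump{u^n}=\frac{n}{m+1}\jump{u^{m+1}}\sum_{k=0}^{n-1}(-1)^k u_+^{n-1-k}u_-^k=\frac{n}{m+1}\jump{u^{m+1}}\frac{\mean{u^n}}{\mean{u}}.
\end{equation*}
This is a polynomial for every $m$ precisely because $n$ is odd. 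It is antisymmetric (the sum is symmetric for odd $n$), vanishes for $u_-=u_+$, and satisfies $D/\jump{u^n}\to u^m$ as $u_\pm\to u$. Multiplying by $\alpha\mean{u}$ and adding $(1-\alpha)\mean{u^{m+1}}\jump{u^n}$ returns the product rule, which is entropy conservation.

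As a check, the formula reproduces $\mean{u}\jump{u}$ for $m=n=1$ and $\tfrac13\jump{u^3}$ for $m=2$, $n=1$. Note also that the parity of $n$ matters only for removing the division by $\mean{u}$ at $u_+=-u_-$, not for $D(u,u)=0$ as your proposal suggests.
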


To analyze the general case including even $n$, we rewrite the continuous equation as
\begin{equation}
\partial_t u + \partial_x (u^{m+n}) - u^{n}\partial_x (u^m) = 0.
\end{equation}
A corresponding semi-discretization reads
\begin{equation}
\partial_t u_i + \frac{f^\mathrm{num}_{i+1/2} - f^\mathrm{num}_{i-1/2}}{\Delta x} - \alpha \frac{h^{\mathrm{num}}_{i+1/2}\jump{u^m} + h^{\mathrm{num}}_{i-1/2}\jump{u^m}}{2 \Delta x} - (1-\alpha) \frac{u^n_i \jump{u^m}_{i+1/2} + u^n_i \jump{u^m}_{i-1/2}}{2 \Delta x} = 0.
\label{eq_poly2}
\end{equation}
Here, the entropy potential is
\begin{equation}
\psi = \omega f - F = u \cdot u^{m+n} - \frac{n}{m+n+1} u^{m+n+1} = \frac{m+1}{m+n+1} u^{m+n+1}
\end{equation}
and the EC condition \eqref{TadmorDef_convex} reads
\begin{equation}
\jump{u} f^{\mathrm{num}} + \alpha \mean{u} h^{\mathrm{num}} \jump{u^m} + (1-\alpha) \mean{u^{n+1}} \jump{u^m} = \frac{m+1}{m+n+1} \jump{u^{m+n+1}}.
\end{equation}
Decomposing the jumps as $\jump{u^k} = (u_+ - u_-) \sum_{l=0}^{k-1} u_+^{k-1-l} u_-^l$ gives
\begin{equation}
f^{\mathrm{num}} = \frac{m+1}{m+n+1} \sum_{k=0}^{m+n} u_+^{m+n-k} u_-^k - \alpha \mean{u} h^{\mathrm{num}} \sum_{k=0}^{m-1} u_+^{m-1-k} u_-^k - (1-\alpha) \mean{u^{n+1}} \sum_{k=0}^{m-1} u_+^{m-1-k} u_-^k.
\end{equation}
Here $h^{\mathrm{num}}$ remains a degree of freedom that may be chosen to be symmetric and consistent with $u^n$. Therefore, we can state the following theorem.
\begin{theorem}
Given $m, n \in \mathbb{N}$, the semi-discretization~\eqref{eq_poly2} conserves the entropy $U = \frac{u^2}{2}$ if
\begin{equation}\label{ec_2_monomial}
f^\mathrm{num} =  \frac{m+1}{m+n+1} \sum_{k=0}^{m+n} u_+^{m+n-k} u_-^k - \alpha \mean{u} h^{\mathrm{num}} \sum_{k=0}^{m-1} u_+^{m-1-k} u_-^k - (1-\alpha) \mean{u^{n+1}} \sum_{k=0}^{m-1} u_+^{m-1-k} u_-^k,
\end{equation}
where $\alpha \in \mathbb{R}$ and $h^\mathrm{num}$ is a symmetric and consistent mean value of $u^n$.
\end{theorem}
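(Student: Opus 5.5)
The plan is to reduce the statement to Theorem~\ref{th:EC_linear_combination}, applied to the rewritten system $\partial_t u + \partial_x(u^{m+n}) - u^n \partial_x(u^m) = 0$. This system has the form~\eqref{nonconservative_system} with $f = u^{m+n}$, $H = -u^n$ and $g = u^m$. The semi-discretization~\eqref{eq_poly2} is exactly the scheme~\eqref{FV_convex} with $H^{\mathrm{num}} = -h^{\mathrm{num}}$, and the sign convention in~\eqref{eq_poly2} absorbs the minus sign. Since $h^{\mathrm{num}}$ is consistent with $u^n$, we have $H^{\mathrm{num}}(u,u) = H(u)$. I will also check that $f^{\mathrm{num}}$ in~\eqref{ec_2_monomial} is consistent: at $u_- = u_+ = u$ it should reduce to $u^{m+n}$.

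First I fix the entropy data. With $U = u^2/2$, the entropy variable is $\omega = u$. The entropy flux is determined by $F' = \omega f' + \omega H g'$, which gives $F' = (m+n)u^{m+n} - m u^{m+n} = n u^{m+n}$. Hence $F = \frac{n}{m+n+1} u^{m+n+1}$ and $\psi = \omega f - F = \frac{m+1}{m+n+1} u^{m+n+1}$, as computed in the text.

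Next I substitute into~\eqref{TadmorDef_convex} with equality. The nonconservative terms become $-\alpha \mean{u}(-h^{\mathrm{num}})\jump{u^m} - (1-\alpha)\mean{-u^{n+1}}\jump{u^m}$. The EC condition is therefore exactly the displayed identity
\[
\jump{u} f^{\mathrm{num}} + \alpha \mean{u} h^{\mathrm{num}} \jump{u^m} + (1-\alpha)\mean{u^{n+1}}\jump{u^m} = \frac{m+1}{m+n+1}\jump{u^{m+n+1}}.
\]

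Then I factor every jump with the telescoping identity $\jump{u^k} = \jump{u}\sum_{l=0}^{k-1} u_+^{k-1-l}u_-^l$, applied for $k = m$ and $k = m+n+1$. Both sides carry a common factor $\jump{u}$. Plugging in the proposed $f^{\mathrm{num}}$ from~\eqref{ec_2_monomial} makes the bracketed cofactors agree identically as polynomials. The identity therefore holds for all $u_\pm$, including $u_- = u_+$, where both sides vanish. No division is needed, which is why no parity restriction on $m$ or $n$ arises, in contrast to the previous theorem.

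The main point of care is consistency of $f^{\mathrm{num}}$. At $u_\pm = u$ it equals
\[
(m+1)u^{m+n} - \alpha u \cdot u^n \cdot m u^{m-1} - (1-\alpha) u^{n+1} \cdot m u^{m-1} = (m+1-m)\,u^{m+n} = u^{m+n},
\]
as required. Symmetry of $f^{\mathrm{num}}$ follows because $h^{\mathrm{num}}$ is symmetric and each sum is a symmetric polynomial. Theorem~\ref{th:EC_linear_combination} then yields entropy conservation for every $\alpha \in \mathbb{R}$.
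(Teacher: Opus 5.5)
Your proposal is correct and follows essentially the paper's route: you cast the rewritten equation as an instance of \eqref{FV_convex} with $H = -u^n$, $g = u^m$, $H^{\mathrm{num}} = -h^{\mathrm{num}}$, and compute $\psi = \frac{m+1}{m+n+1}u^{m+n+1}$. You then verify the equality case of Theorem~\ref{th:EC_linear_combination} by factoring $\jump{u^m}$ and $\jump{u^{m+n+1}}$. Your explicit consistency check of $f^{\mathrm{num}}$, and your choice to verify the identity by multiplying through by $\jump{u}$ rather than dividing by it, fill in details the paper leaves implicit.
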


This shows that the choice of how to write a system in nonconservative form can affect the construction of entropy-conserving methods.

\subsection{Shallow-water equations}

Considerable effort has been devoted to the semi-discretization of the shallow-water equations, with various entropy-conservative fluxes and split forms~\cite{FJORDHOLM20115587,gassner2016shallowwater,WINTERMEYER2017200,Ranocha2017}.
The 1D equations are
\begin{equation}
\begin{aligned}
    &\partial_t h + \partial_x \left ( hv \right ) = 0,\\
    & \partial_t (hv) + \partial_x \left ( hv^2 + \frac{1}{2}g h^2 \right ) + g h \partial_x b = 0,
\end{aligned}
\end{equation}
where $h$ is the water height, $v$ the velocity, $b$ the bottom topography (bathymetry), and $g$ the gravitational acceleration constant.
The entropy pair is
\begin{equation}
U = \frac{1}{2}hv^2 + \frac{1}{2}gh^2 + ghb, \quad F = \frac{1}{2}h v^3 + ghv(h + b).
\end{equation}
The entropy variables and the entropy potential are
\begin{equation}
\vec{\omega} = \left (-\frac{1}{2}v^2 + gh + gb, v \right ), \quad \psi = \vec{\omega} \cdot \vec{f} - F = \frac{1}{2} g h^2 v.
\end{equation}
The entropy conservation condition~\eqref{TadmorDef_convex} becomes
\begin{multline}
-\frac{1}{2}\jump{v^2}f^h + g \jump{h}f^h + g \jump{b} f^h + \jump{v}f^{hv} \\
- \alpha g \mean{v}h^{\mathrm{num}} \jump{b} -(1 - \alpha)g \mean{hv} \jump{b} = \frac{1}{2}g\jump{h^2 v}.
\end{multline}
We introduce the decomposition $f^{hv} = f^{hv,1} + f^{hv,2}$, where
\begin{equation}
f^{hv,1}(\vec{u},\vec{u}) = h v^2, \quad f^{hv,2}(\vec{u},\vec{u}) = \frac{1}{2} g h^2, \quad \vec{u} = (h, hv)^T.
\end{equation}
Grouping similar terms shows that the EC condition is satisfied if
\begin{equation}
    \begin{aligned}
& f^h = \alpha h^{\mathrm{num}} \mean{v} + (1-\alpha) \mean{hv},\\
& f^{hv,1} = f^h \mean{v}, \\
& g \jump{h} f^h + \jump{v} f^{hv,2} = \frac{1}{2} g \jump{h^2 v}.
    \end{aligned}
    \label{SW_conditions}
\end{equation}
The first two equations determine the numerical fluxes up to $\alpha$. The last condition needs to be satisfied in order to get the last remaining flux $f^{hv,2}$.
Using the discrete product rule $\jump{a b} = \mean{a}\jump{b} + \jump{a}\mean{b}$, we can rewrite the right-hand side using
\begin{equation}
\begin{aligned}
    &\quad
    \frac{1}{2}\jump{h^2 v}
    =
    (1-\alpha) \jump{h^2v} + \left(\alpha - \frac{1}{2}\right) \jump{h^2v}
    \\
    &= ( 1 - \alpha) \jump{h} \mean{hv} + (1 - \alpha) \jump{hv} \mean{h} + \left (\alpha - \frac{1}{2} \right ) 2 \jump{h}\mean{h} \mean{v}+ \left ( \alpha - \frac{1}{2} \right ) \jump{v} \mean{h^2}
    \\
    &= ( 1 - \alpha) \jump{h} \mean{hv} + (1 - \alpha) \jump{h} \mean{h} \mean{v} + (1-\alpha) \jump{v}\mean{h}^2
    \\
    &\quad
    + \left (\alpha - \frac{1}{2} \right ) 2 \jump{h}\mean{h} \mean{v}+ \left ( \alpha - \frac{1}{2} \right ) \jump{v} \mean{h^2}.
\end{aligned}
\end{equation}
Inserting this splitting into the last of~\eqref{SW_conditions} gives
\begin{equation}
\begin{aligned}
    f^h &= \alpha h^{\mathrm{num}} \mean{v} + (1-\alpha) \mean{hv}, \\
    f^{hv} &= f^h \mean{v} + (1-\alpha)g \mean{h}^2 + \Bigl (\alpha - \frac{1}{2} \Bigr ) g \mean{h^2},\\
    h^{\mathrm{num}} &= \mean{h},
\end{aligned}
\label{shallow_water_split}
\end{equation}
which is the same one-parameter family of numerical fluxes for split forms constructed in~\cite{Ranocha2017}, recovering the numerical fluxes of Fjordholm et al.~\cite{FJORDHOLM20115587} and Gassner et al.~\cite{gassner2016shallowwater}.


\subsection{Hyperbolized Sainte-Marie equations}
The Sainte-Marie system~\cite{saintemarie2015,sainte2011vertically} is a depth-averaged model for non-hydrostatic free-surface flows, which accounts for the effects related to dispersive waves (see also~\cite{fernandeznieto:hal-01324012} for more details).
We consider the hyperbolic reformulation derived by Escalante et al.~\cite{ESCALANTE2019385}, where the evolution of the non-hydrostatic pressure is introduced as a prognostic variable, after manipulating the original system of equations in~\cite{saintemarie2015} by coupling the velocity field with the remaining conservation laws.
As we are interested in EC fluxes, we consider the system without friction for $\gamma = 2$, i.e.,
\begin{equation}
    \label{eq:escalante_saintemarie}
    \begin{aligned}
    &\partial_t h + \partial_x (h v) = 0, \\
    &\partial_t (h v) + \partial_x \left(h v^2 + \frac{1}{2} g h^2 + hp\right) + gh \partial_x b + 2p \partial_x b = 0,\\
    &\partial_t (h w) + \partial_x (h w v) = 2p,\\
    &\partial_t (hp) + \partial_x (h p v) + h c^2 \partial_x v - 2 c^2 v \partial_x b = -2 c^2 w,\\
     \end{aligned}
\end{equation}
where $h$ is the water depth, $v$ and $w$ denote the depth-averaged horizontal and vertical velocity, $p$ is the non-hydrostatic pressure, $b$ is the bottom topography, $g$ is the gravitational acceleration constant, and $c = \alpha \sqrt{g b_0}$ is the constant celerity, with $b_0$ being the reference depth and $\alpha >1$. Following~\cite{ESCALANTE2019385}, the system admits an entropy pair
\begin{equation}
U = \frac{1}{2}h v^2 + \frac{1}{2}h w^2 + \frac{1}{2c^2} h p^2 + \frac{1}{2}gh^2 + ghb, \quad F = U v + \frac{1}{2}g h^2 v + h p v.
\end{equation}
The entropy variables and the entropy potential are
\begin{equation}
\vec{\omega} = \left ( -\frac{v^2}{2} - \frac{w^2}{2} - \frac{p^2}{2c^2} + gh + gb, v, w, \frac{p}{c^2} \right ), \quad \psi = \vec{\omega} \cdot \vec{f} - F = \frac{1}{2} g h^2 v.
\end{equation}

For the semi-discretization of the system~\eqref{eq:escalante_saintemarie} we use a linear combination of the first and third form as in Theorem~\ref{th:EC_linear_combination}, which reads
\begin{multline}
    \partial_t \vec{u}_i
    + \frac{\vec{f}^{\mathrm{num}}_{i+1/2} - \vec{f}^{\mathrm{num}}_{i-1/2}}{\Delta x}
    \\
    + \sum_{k=1}^4 \left ( \alpha_k \frac{\vec{H}^{\mathrm{num}}_{k,i+1/2}\jump{g_k}_{i+1/2} + \vec{H}^{\mathrm{num}}_{k,i-1/2}\jump{g_k}_{i-1/2}}{2 \Delta x} + (1-\alpha_k) \vec{H}_{k,i} \frac{{\jump{g_k}_{i+1/2} + \jump{g_k}_{i-1/2}}}{2 \Delta x} \right )
    = \vec{0},
\label{saintmarie_semi}
\end{multline}
where we have introduced for each nonconservative term a parameter $\alpha_k$ to allow for more flexibility in the choice of the numerical fluxes and
\begin{equation}
\vec{u} =
\begin{pmatrix}
h \\
hv \\
hw \\
hp
\end{pmatrix},
\quad
\vec{f}^{\mathrm{num}} =
\begin{pmatrix}
f^{h} \\
f^{hv} \\
f^{hw} \\
f^{hp}
\end{pmatrix},
\quad
g_1 = g_2 = g_4 = b, \quad g_3 = v,
\label{saintmarie_noncons_fluxes}
\end{equation}
\begin{equation}
\vec{H}^{\mathrm{num}}_1 =
\begin{pmatrix}
0  \\
gh^{\mathrm{num}} \\
0 \\
0
\end{pmatrix},
\quad
\vec{H}^{\mathrm{num}}_2 =
\begin{pmatrix}
0  \\
2p^{\mathrm{num}} \\
0 \\
0
\end{pmatrix},
\quad
\vec{H}^{\mathrm{num}}_3 =
\begin{pmatrix}
0  \\
0 \\
0 \\
c^2 h^{\mathrm{num}} \\
\end{pmatrix},
\quad
\vec{H}^{\mathrm{num}}_4 =
\begin{pmatrix}
0  \\
0 \\
0 \\
-2c^2 v^{\mathrm{num}} \\
\end{pmatrix}.
\end{equation}
Following Theorem~\ref{th:EC_linear_combination}, the entropy conservation condition can be written as
\begin{equation}
\jump{\vec{\omega}} \cdot \vec{f}^{\mathrm{num}} -\sum_{k=1}^4 \Bigl ( \alpha_k \mean{\vec{\omega}} \cdot \vec{H}^{\mathrm{num}}_k\jump{g_k} + (1-\alpha_k) \mean{\vec{\omega} \cdot \vec{H}_k} \jump{g_k} \Bigr ) = \jump{\psi}.
\end{equation}
Substituting the above expressions, we have
\begin{equation}
\begin{aligned}
    &
    -\jump{v}\mean{v}f^h - \jump{w}\mean{w}f^{h} - \frac{\jump{p}}{c^2}\mean{p}f^{h} + g\jump{h}f^h + g \jump{b}f^h
    + \jump{v}f^{hv} + \jump{w}f^{hw}
    \\&
    + \frac{\jump{p}}{c^2}f^{hp}
    -\alpha_1 g \mean{v} h^{\mathrm{num}}\jump{b} - (1-\alpha_1) g \mean{hv} \jump{b}
    -2 \alpha_2 \mean{v} p^{\mathrm{num}}\jump{b} - 2 (1- \alpha_2)\mean{pv}\jump{b}
    \\&
- \alpha_3 \mean{p}h^{\mathrm{num}}\jump{v}- (1-\alpha_3) \mean{ph} \jump{v} +2 \alpha_4 \mean{p}v^{\mathrm{num}}\jump{b} + 2 (1- \alpha_4)\mean{pv}\jump{b} = \frac{1}{2}g\jump{h^2v}.
\end{aligned}
\end{equation}
We consider the splitting $f^{hv} = f^{hv,\mathrm{sw}} + f^{hv,\mathrm{nh}}$, where the superscript $\mathrm{sw}$ denotes the classical shallow-water hydrostatic pressure term $\left ( \frac{1}{2}gh^2 \right )$ and $\mathrm{nh}$ denotes the non-hydrostatic pressure term $\left (hp \right )$. We can at this point split the terms and isolate the shallow-water terms from the rest, where the shallow-water part is satisfied by~\eqref{shallow_water_split}.
The remaining terms are
\begin{equation}
\begin{aligned}
    &
    - \jump{w}\mean{w}f^{h} - \frac{\jump{p}}{c^2}\mean{p}f^{h} + g \jump{b}f^h
    + \jump{v}f^{hv,\mathrm{nh}} + \jump{w}f^{hw} + \frac{\jump{p}}{c^2}f^{hp}
    \\&
    -2 \alpha_2 \mean{v} p^{\mathrm{num}}\jump{b} - 2 (1- \alpha_2)\mean{pv}\jump{b}
    - \alpha_3 \mean{p}h^{\mathrm{num}}\jump{v} - (1-\alpha_3) \mean{ph} \jump{v}
    \\&
    +2 \alpha_4 \mean{p}v^{\mathrm{num}}\jump{b} + 2 (1- \alpha_4)\mean{pv}\jump{b} = 0.
\end{aligned}
\end{equation}
The terms with the variable bathymetry are satisfied if $\alpha_2 = \alpha_4$.
The remaining terms are easily satisfied by imposing that each coefficient of the jumps vanishes, which leads to the following result.
\begin{theorem}
\label{th:ec_saintmarie}
   Given $\alpha_1, \alpha_2, \alpha_3 \in \mathbb{R}$, the semi-discretization~\eqref{saintmarie_semi} with $\alpha_4 = \alpha_2$ conserves the total energy $U$ if
\begin{equation}
    \label{eq:ec_saintmarie_flux}
    \begin{aligned}
        f^h &= \alpha_1 \mean{h} \mean{v} + (1-\alpha_1) \mean{hv} \\
        f^{hv} &= f^h \mean{v} + (1-\alpha_1)g \mean{h}^2 + \Bigl (\alpha_1 - \frac{1}{2} \Bigr ) g \mean{h^2} + \alpha_3 \mean{p}\mean{h} + (1-\alpha_3) \mean{ph}\\
        f^{hw} &= f^h \mean{w}, \quad
        f^{hp} = f^h \mean{p}, \quad
        h^{\mathrm{num}} = \mean{h}, \quad
        v^{\mathrm{num}} = \mean{v}, \quad
        p^{\mathrm{num}} = \mean{p}.
    \end{aligned}
\end{equation}
\end{theorem}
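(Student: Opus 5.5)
The approach is to invoke Theorem~\ref{th:EC_linear_combination}, adapted to several nonconservative terms with individual parameters $\alpha_k$. Because each nonconservative term enters the semi-discretization~\eqref{saintmarie_semi} additively, the proofs of Lemma~\ref{algebraic_lemma_nc_form1} and Lemma~\ref{algebraic_lemma_nc_form3} carry over term by term. Entropy conservation is therefore equivalent to the equality version of the condition displayed just before the theorem:
\begin{equation*}
\jump{\vec{\omega}} \cdot \vec{f}^{\mathrm{num}} -\sum_{k=1}^4 \Bigl( \alpha_k \mean{\vec{\omega}} \cdot \vec{H}^{\mathrm{num}}_k\jump{g_k} + (1-\alpha_k) \mean{\vec{\omega} \cdot \vec{H}_k} \jump{g_k} \Bigr) = \jump{\psi},
\qquad \psi = \tfrac{1}{2} g h^2 v.
\end{equation*}
Two preliminary checks are needed. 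First, the choices in~\eqref{eq:ec_saintmarie_flux} are consistent, so that $f^{hv}(\vec{u},\vec{u}) = hv^2 + \tfrac12 gh^2 + hp$. Second, the source terms $2p$ and $-2c^2 w$ do not affect the entropy balance, since $w \cdot 2p + \frac{p}{c^2}(-2c^2 w) = 0$ cellwise. It then only remains to verify the displayed identity algebraically.

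I would expand the left-hand side exactly as in the long display preceding the theorem. For the jump of the first entropy variable I would use $\jump{-v^2/2} = -\mean{v}\jump{v}$, and similarly for $w$ and $p$. I would then split $f^{hv} = f^{hv,\mathrm{sw}} + f^{hv,\mathrm{nh}}$, where
\begin{equation*}
f^{hv,\mathrm{sw}} = f^h\mean{v} + (1-\alpha_1) g\mean{h}^2 + \bigl(\alpha_1 - \tfrac12\bigr) g \mean{h^2},
\qquad
f^{hv,\mathrm{nh}} = \alpha_3\mean{p}\mean{h} + (1-\alpha_3)\mean{ph}.
\end{equation*}
With $h^{\mathrm{num}} = \mean{h}$, the collection of terms $-\jump{v}\mean{v}f^h + g\jump{h}f^h + g\jump{b}f^h + \jump{v}f^{hv,\mathrm{sw}} - \alpha_1 g \mean{v}\mean{h}\jump{b} - (1-\alpha_1) g\mean{hv}\jump{b}$ equals $\tfrac12 g\jump{h^2 v}$. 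This follows directly from the shallow-water computation leading to~\eqref{shallow_water_split}: the $\jump{b}$ terms cancel because $f^h = \alpha_1\mean{h}\mean{v} + (1-\alpha_1)\mean{hv}$, and the rest is the product-rule splitting done there. This leaves the remainder identity displayed before the theorem, which must vanish.

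I would verify the remainder by grouping it according to the jumps $\jump{w}$, $\jump{p}$, $\jump{v}$ and $\jump{b}$.
\begin{itemize}
\item The $\jump{w}$ coefficient is $-\mean{w}f^h + f^{hw}$, which vanishes for $f^{hw} = f^h\mean{w}$.
\item The $\jump{p}$ coefficient is $c^{-2}\bigl(-\mean{p}f^h + f^{hp}\bigr)$, which vanishes for $f^{hp} = f^h\mean{p}$.
\item The $\jump{v}$ coefficient is $f^{hv,\mathrm{nh}} - \alpha_3\mean{p}h^{\mathrm{num}} - (1-\alpha_3)\mean{ph}$, which vanishes for $h^{\mathrm{num}} = \mean{h}$ by the definition of $f^{hv,\mathrm{nh}}$.
\item The $\jump{b}$ coefficient is $-2\alpha_2\mean{v}p^{\mathrm{num}} - 2(1-\alpha_2)\mean{pv} + 2\alpha_4\mean{p}v^{\mathrm{num}} + 2(1-\alpha_4)\mean{pv}$. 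With $\alpha_4 = \alpha_2$, $p^{\mathrm{num}} = \mean{p}$ and $v^{\mathrm{num}} = \mean{v}$, this reduces to zero.
\end{itemize}
Here I must make sure the $g\jump{b}f^h$ term was fully consumed in the shallow-water part rather than counted twice, since the remainder display repeats it.

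The main obstacle is bookkeeping. In particular, the $g\jump{b}f^h$ term appears in both the shallow-water part and the remainder display, and it must be attributed to exactly one of them. Apart from that, one must confirm that the multi-term generalization of Theorem~\ref{th:EC_linear_combination} is legitimate, which is immediate from linearity of the three-point argument. Finally, the numerical entropy flux is given by the sum of~\eqref{F_num_form1} and~\eqref{F_num_form3} contributions, weighted by $\alpha_k$.
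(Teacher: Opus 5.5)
Your proposal is correct and follows the paper's argument. Like the paper, you apply Theorem~\ref{th:EC_linear_combination} term by term with separate $\alpha_k$, peel off the shallow-water part via~\eqref{shallow_water_split}, and make the remaining $\jump{w}$, $\jump{p}$, $\jump{v}$ and $\jump{b}$ coefficients vanish using $\alpha_4=\alpha_2$ and arithmetic means. You are also right that $g\jump{b}f^h$ is consumed entirely by the shallow-water part, even though the paper's remainder display repeats it, and your check that the sources $2p$ and $-2c^2 w$ cancel in the entropy balance covers a point the paper leaves implicit.
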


\subsubsection{Well-balanced steady states}
The entropy-conservative semi-discretization given in Theorem~\ref{th:ec_saintmarie} is well-balanced for the lake-at-rest steady state, which is given by
\begin{equation}\label{eq:lake_at_rest_saintmarie}
v = w = p = 0, \quad h + b = \text{const}.
\end{equation}
Note that for $w = p = 0$, the system~\eqref{eq:escalante_saintemarie} reduces to the shallow-water equations and for $h+b=\text{const}$ we have the classical lake-at-rest steady state.
\begin{lemma}\label{lemma_wb_saintemarie}
The semi-discretization given in Theorem~\ref{th:ec_saintmarie} is well-balanced for the lake-at-rest steady state~\eqref{eq:lake_at_rest_saintmarie}.
\end{lemma}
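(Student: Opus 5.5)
Plugging the discrete lake-at-rest state into the semi-discretization~\eqref{saintmarie_semi} with the fluxes~\eqref{eq:ec_saintmarie_flux}, I will check that every component of the right-hand side vanishes. So assume $v_i = w_i = p_i = 0$ and $h_i + b_i = C$ for all $i$. Then every arithmetic mean of $v$, $w$, $p$ vanishes, as do products containing them: $\mean{v} = \mean{w} = \mean{p} = 0$, $\mean{hv} = \mean{ph} = 0$. Hence $f^h = 0$, $f^{hw} = f^h \mean{w} = 0$, $f^{hp} = f^h\mean{p} = 0$, and $v^{\mathrm{num}} = p^{\mathrm{num}} = 0$.

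The mass and vertical-momentum equations have no nonconservative terms ($\vec{H}_k$ has zero first and third components), so $\partial_t h_i = \partial_t (hw)_i = 0$. The source $2p$ and $-2c^2 w$ are zero. In the $hp$ equation, the nonconservative terms are $\alpha_3$- and $(1-\alpha_3)$-weighted $c^2 h \jump{v}$ contributions, which vanish since $\jump{v}=0$, and the $\alpha_4$ terms $-2c^2 v^{\mathrm{num}}\jump{b}$ and $-2c^2 v_i \jump{b}$, which vanish since $v$ and $v^{\mathrm{num}}$ are zero. So $\partial_t (hp)_i = 0$.

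The only nontrivial part is the $hv$ equation, and it is the main step. Here $f^{hv}$ reduces to $(1-\alpha_1) g\mean{h}^2 + (\alpha_1 - \tfrac12) g \mean{h^2}$ (the $p$-terms vanish). The nonconservative $\alpha_2$ terms vanish since $p=0$. What remains is
\begin{equation*}
\frac{f^{hv}_{i+1/2} - f^{hv}_{i-1/2}}{\Delta x} + g\,\alpha_1 \frac{\mean{h}_{i+1/2}\jump{b}_{i+1/2} + \mean{h}_{i-1/2}\jump{b}_{i-1/2}}{2\Delta x} + g(1-\alpha_1) h_i \frac{\jump{b}_{i+1/2}+\jump{b}_{i-1/2}}{2\Delta x}.
\end{equation*}
I will use $\jump{b} = -\jump{h}$ and treat the $\alpha_1$-independent and $\alpha_1$-proportional parts separately, since the expression is affine in $\alpha_1$. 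For the $\alpha_1^0$ part, I would use $\mean{h}^2$ and $h_i\jump{h}/2$: write $\mean{h}^2_{i+1/2} - \mean{h}^2_{i-1/2} = \tfrac14(h_{i+1}-h_{i-1})(h_{i+1}+2h_i+h_{i-1})$. Combined with $-\tfrac12\mean{h^2}$ and $-\tfrac12 h_i (h_{i+1}-h_{i-1})$, expanding should give zero. For the $\alpha_1$ coefficient, I need $-\mean{h}^2 + \mean{h^2}$ differenced, minus $\tfrac12(\mean{h}\jump{h})_{i\pm1/2}$, plus $\tfrac12 h_i(h_{i+1}-h_{i-1})$. Using $\mean{h^2}-\mean{h}^2 = \jump{h}^2/4$ and $\mean{h}\jump{h} = \jump{h^2}/2$, this is a direct polynomial identity in $h_{i-1},h_i,h_{i+1}$. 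That is exactly the well-balancedness of the shallow-water split family~\eqref{shallow_water_split}, so I expect a routine but careful expansion to close the argument.
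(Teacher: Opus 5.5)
Your proposal is correct and follows the same route as the paper: every component except the $hv$ equation vanishes because $v=w=p=0$, the $p$-dependent flux and $\alpha_2$ terms drop out, and what remains is the well-balancedness of the shallow-water split-form family~\eqref{shallow_water_split}. The only difference is that the paper cites Lemma~12 of~\cite{Ranocha2017} for that last step, whereas you verify it by direct expansion. That expansion does close for every $\alpha_1$: in each of the $\alpha_1^0$ and $\alpha_1$ parts, the flux difference cancels exactly against the nonconservative terms, and the terms that cancel are multiples of $h_i(h_{i+1}-h_{i-1})$ and $h_{i+1}^2-h_{i-1}^2$.
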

\begin{proof}
Since the velocities and the non-hydrostatic pressure $p$ are identically zero, the only terms that can be non-zero are in the momentum equation of $hv$
\begin{multline}
\frac{f^{hv,p}_{i+1/2} - f^{hv,p}_{i-1/2}}{\Delta x} + \frac{\left (\alpha_1 gh^{\mathrm{num}}_{i+1/2} + \alpha_2 2 p^{\mathrm{num}}_{i+1/2}\right )\jump{b}_{i+1/2} + \left (\alpha_1 gh^{\mathrm{num}}_{i-1/2} + \alpha_2 2 p^{\mathrm{num}}_{i-1/2}\right )\jump{b}_{i-1/2}}{2\Delta x}+ \\
\Bigl ((1-\alpha_1)g h_i + (1-\alpha_2) 2 p_i \Bigr ) \frac{\jump{b}_{i+1/2} + \jump{b}_{i-1/2}}{2\Delta x} = 0.
\end{multline}
Again, we consider the splitting $f^{hv} = f^{hv,sw} + f^{hv,nh}$, and the proof for the shallow-water terms follows Lemma~12 in~\cite{Ranocha2017}. Now, considering that $p = 0$, the remaining terms vanish.
\end{proof}

\subsection{Ideal magnetohydrodynamics equations}
We consider the three-dimensional compressible magnetohydrodynamics (MHD) equations augmented with a generalized Lagrange multiplier (GLM) divergence-cleaning technique~\cite{Derigs2018420, bohm2018entropy}.
In this formulation, nonconservative
terms naturally appear if the divergence-free constraint on the magnetic field is
not strictly enforced at the discrete level.
The GLM-MHD equations generalize the classical Euler equations by including the magnetic effects and the divergence-cleaning field.
The primary variables are the fluid density $\varrho$, the momentum $\varrho \vec{v} = \varrho ( v_1, v_2, v_3)^T$, and the total energy $\varrho E$, together with the magnetic field $\vec{B} = (B_1, B_2, B_3)^T$ and the scalar field $\Psi$ responsible for cleaning divergence errors.
The governing equations can be expressed compactly as~\cite{Derigs2018420, bohm2018entropy}
\begin{multline}
    \label{eq:glm-mhd}
\partial_t
\begin{pmatrix} \varrho \\ \varrho \vec{v} \\ \varrho E \\ \vec{B} \\ \Psi
\end{pmatrix}
+
\nabla \cdot
\begin{pmatrix}
\varrho \vec{v} \\
\varrho \vec{v} \otimes \vec{v} + \left (p + \frac{1}{2} \|\vec{B}\|^2 \right )\vec{I} - \vec{B} \otimes \vec{B}
 \\
\vec{v}\left (\frac{1}{2}\varrho \left\|\vec{v}\right\|^2 + \frac{\gamma p}{\gamma -1} + \|\vec{B}\|^2 \right )
- \vec{B}\left (\vec{v}\cdot\vec{B}\right )
+ c_h \Psi \vec{B}
 \\
 \vec{v}\otimes \vec{B} - \vec{B}\otimes \vec{v}
+ c_h \Psi \vec{I} \\
 c_h \vec{B} \\
\end{pmatrix}
\\
 +
(\nabla \cdot \vec{B})
\begin{pmatrix}
 0 \\ \vec{B} \\ \vec{v} \cdot \vec{B} \\  \vec{v}
\\ 0
\end{pmatrix}
 +
\nabla \Psi \cdot
\begin{pmatrix}
 0 \\ 0 \\ \vec{v}\Psi \\  0 \\ \vec{v}
\end{pmatrix}
 =
 \begin{pmatrix}
     0 \\ \vec{0} \\ 0 \\ \vec{0} \\ 0
 \end{pmatrix}.
\end{multline}
Here $c_h$ is the divergence-cleaning speed, $\vec{I}$ is the $3 \times 3$ identity matrix,
and the pressure is
\begin{equation}
\label{eq:pressure_mhd}
p = (\gamma - 1) \left( \varrho E - \frac{1}{2} \varrho \|\vec{v}\|^2
- \frac{1}{2} \|\vec{B}\|^2
- \frac{1}{2} \Psi^2
\right),
\end{equation}
where $\gamma = 5/3$ for a monoatomic ideal gas.
Under the divergence-free constraint $\nabla \cdot \vec{B} = 0$, the ideal MHD equations admit an entropy function~\cite{bohm2018entropy,Derigs2018420,ChandrashekarKlingenberg2016}
\begin{equation}
\partial_t U(\vec{u}) + \nabla \cdot \vec{F}(\vec{u}) = 0
\end{equation}
with
\begin{equation}
    U(\vec{u}) = -\frac{\varrho s}{\gamma -1}, \quad \vec{F}(\vec{u}) = - \frac{\varrho s}{\gamma -1} \vec{v}.
\end{equation}
The associated entropy variables are given by~\cite{ChandrashekarKlingenberg2016}
\begin{equation}
    \vec{\omega} = \left (\frac{\gamma-s}{\gamma-1}-\beta \|\vec{v}\|^2,\ 2 \beta \vec{v},\ - 2\beta,\ 2 \beta \vec{B}, 2 \beta \Psi \right ),
\end{equation}
where $\beta = \frac{\varrho}{2{p}}$. The corresponding entropy potential is given by~\cite{ChandrashekarKlingenberg2016}
\begin{equation}
\psi_j = \vec{\omega} \cdot \vec{f}_j - F_j = \varrho v_j + \beta v_j \|\vec{B}\|^2 + 2 \beta c_h B_j \Psi - \phi B_j, \quad j = 1,2,3,
\end{equation}
where $\phi = 2 \beta \vec{v} \cdot \vec{B}$.
Applying the condition~\eqref{TadmorDef_convex} for $\alpha = 0$ yields
\begin{equation}
\jump{\vec{\omega}} \cdot \vec{f}^{\mathrm{num},j} - \mean{\phi}\jump{B_j} = \jump{\varrho v_j} + \jump{\beta v_j \| \vec{B} \|^2} + \jump{2 \beta c_h B_j \Psi} - \jump{\phi B_j}.
\end{equation}
This is equivalent to the condition in~\cite[Eq. 4.41]{Derigs2018420}, which have been used there to construct EC fluxes, and extended to the multi-ion MHD equations in~\cite{rueda2025entropy}.

\subsection{Compressible Euler equations in nonconservative form}
Renac~\cite{RENAC20191} considered the Euler equations with internal energy $e$ as thermodynamic variable.
We consider the same system but rewrite the pressure work term in the internal energy equation and include a gravity potential $\phi$.
We extend the numerical fluxes presented in~\cite{RENAC20191}, avoiding the influence of the temperature on the pressure discretization in the momentum equation.
The equations are
\begin{equation}
\begin{aligned}
&\partial_t \varrho + \partial_x (\varrho v) = 0,\\
&\partial_t (\varrho v) + \partial_x ( \varrho v^2 + p) = -\varrho \partial_x \phi,\\
&\partial_t ( \varrho e) + \partial_x ( \varrho e v + pv) - v \partial_x p = 0,
\label{euler_noncons}
\end{aligned}
\end{equation}
where $v$ is the velocity, $p$ is the pressure, $e$ is the internal energy, and $\varrho$ is the density.
We assume the ideal gas law $p = (\gamma-1)\varrho e$ with adiabatic constant $\gamma$.
Two quantities are conserved for smooth solutions: the total energy
\begin{equation}
U_{\varrho E} = \varrho E = \varrho e + \frac{1}{2} \varrho v^2+ \varrho \phi
\end{equation}
and the thermodynamic entropy\footnote{In accordance with the mathematical literature, we use a minus sign to obtain a convex entropy. In contrast to~\cite{ismail2009affordable,ECChandra,Ranocha2018}, we do not use a scaling by $\gamma - 1$ to simplify terms when working with the internal energy as thermodynamic variable.}
\begin{equation}
    U_{\varrho s} = -\varrho s,
    \qquad
    s = \log \left ( \frac{p}{\varrho^{\gamma}}\right )
    = \log (\gamma - 1) + \log \left ( \frac{\varrho e }{\varrho^{\gamma}}\right ).
\end{equation}
Our goal is to find a method that conserves both total energy and entropy.

For the semi-discretization of~\eqref{euler_noncons}, we set $\alpha = 1$, which results in the numerical scheme
\begin{equation}
    \partial_t \vec{u}_i
    + \frac{\vec{f}^{\mathrm{num}}_{i+1/2} - \vec{f}^{\mathrm{num}}_{i-1/2}}{\Delta x}
    + \frac{\vec{H}^{\mathrm{num}}_{i+1/2}\jump{\vec{g}}_{i+1/2} + \vec{H}^{\mathrm{num}}_{i-1/2}\jump{\vec{g}}_{i-1/2}}{2 \Delta x}
    = \vec{0},
\label{euler_noncons_semi}
\end{equation}
where
\begin{equation}
\vec{u} =
\begin{pmatrix}
\varrho \\
\varrho v \\
\varrho e
\end{pmatrix},
\quad
\vec{f}^{\mathrm{num}} =
\begin{pmatrix}
f^{\varrho} \\
f^{\varrho v} \\
f^{\varrho e,1} + f^{\varrho e,2}
\end{pmatrix},
\quad
\vec{g} =
\begin{pmatrix}
\phi \\
p
\end{pmatrix},
\quad
\vec{H}^{\mathrm{num}} =
\begin{pmatrix}
0 & 0 \\
\varrho^{\mathrm{num}} & 0 \\
0 & v^{\mathrm{num}}
\end{pmatrix}.
\label{euler_noncons_fluxes}
\end{equation}

\subsubsection{Total energy conservation}

For the total energy $U_{\varrho E}$, the associated flux and entropy variables are
\begin{equation}
F_{\varrho E} = (\varrho E  + p) v, \quad \vec{\omega}_{\varrho E} = \left (-\frac{1}{2}v^2 + \phi, v, 1 \right ).
\end{equation}
Since $\psi_{\varrho E} = \vec{\omega}_{\varrho E} \cdot \vec{f} - F_{\varrho E} = pv$, the entropy conservation condition becomes
\begin{equation}
\begin{aligned}
    -\frac{1}{2}\jump{v^2} f^{\varrho} + \jump{\phi}f^{\varrho} + \jump{v}f^{\varrho v} - \mean{v}\varrho^{\mathrm{num}}\jump{\phi} + v^{\mathrm{num}}\jump{p} = \jump{pv}.
\end{aligned}
\end{equation}
Using the discrete product rule $\jump{pv} = \jump{p}\mean{v} + \mean{p}\jump{v}$, we observe that the numerical fluxes satisfying the conditions for kinetic energy-preserving (KEP) fluxes~\cite{Jameson2008,Kuya2018,Ranocha2020,Ranocha2022, ranocha2018thesis}, i.e.,
\begin{equation}
\label{eq:kep_conditions}
\begin{aligned}
& f^{\varrho v} = \mean{v} f^{\varrho} + \mean{p},\\
& f^{\varrho} = \varrho^{\mathrm{num}} \mean{v},\\
& v^{\mathrm{num}} = \mean{v},
\end{aligned}
\end{equation}
conserve the total energy.
Thus, we have the following result.
\begin{theorem}
\label{thm:total_energy_conservation}
    The semi-discretization~\eqref{euler_noncons_semi} conserves the total energy $\varrho E$ if the numerical fluxes are KEP, i.e., \eqref{eq:kep_conditions} holds.
\end{theorem}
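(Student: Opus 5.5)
The plan is to apply Theorem~\ref{th:EC_linear_combination} with $\alpha = 1$, that is, the third-form condition \eqref{TadmorDef_form3}. The pair is $U_{\varrho E}$, $F_{\varrho E}$, with entropy variables $\vec{\omega}_{\varrho E} = (-\tfrac12 v^2 + \phi, v, 1)$ and potential $\psi_{\varrho E} = pv$. Since that theorem gives a necessary and sufficient characterization, it suffices to verify that \eqref{TadmorDef_form3} holds with equality for every pair of states $\vec{u}_\pm$. Equality in Lemma~\ref{algebraic_lemma_nc_form3} then also supplies the numerical total-energy flux \eqref{F_num_form3}.

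First I evaluate each term of the condition. The conservative part is
\[
\jump{\vec{\omega}} \cdot \vec{f}^{\mathrm{num}} = \bigl(-\tfrac12\jump{v^2} + \jump{\phi}\bigr) f^{\varrho} + \jump{v} f^{\varrho v}.
\]
The jump of the third entropy variable is zero, so the internal-energy fluxes $f^{\varrho e,1}+f^{\varrho e,2}$ drop out entirely. For the nonconservative part, $\vec{H}^{\mathrm{num}}\jump{\vec{g}} = (0,\ \varrho^{\mathrm{num}}\jump{\phi},\ v^{\mathrm{num}}\jump{p})^T$. Contracting with $\mean{\vec{\omega}} = (\cdot, \mean{v}, 1)$ gives
\[
\mean{\vec{\omega}}\cdot\vec{H}^{\mathrm{num}}\jump{\vec{g}} = \mean{v}\varrho^{\mathrm{num}}\jump{\phi} + v^{\mathrm{num}}\jump{p}.
\]
I need to be careful with signs. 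In \eqref{euler_noncons} the gravity term sits on the right-hand side and the pressure work enters with a minus sign. Once these are absorbed into $\vec{H}$, the condition to check is exactly the identity displayed just before the theorem:
\[
-\tfrac12\jump{v^2} f^{\varrho} + \jump{\phi}f^{\varrho} + \jump{v}f^{\varrho v} - \mean{v}\varrho^{\mathrm{num}}\jump{\phi} + v^{\mathrm{num}}\jump{p} = \jump{pv}.
\]

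Next I substitute the KEP relations \eqref{eq:kep_conditions} and check the identity term by term.
\begin{itemize}
\item The gravity terms cancel, because $\jump{\phi}f^{\varrho} = \jump{\phi}\varrho^{\mathrm{num}}\mean{v}$.
\item The discrete product rule gives $\jump{v^2} = 2\mean{v}\jump{v}$. Hence
\[
-\tfrac12\jump{v^2}f^{\varrho} + \jump{v}f^{\varrho v} = \jump{v}\bigl(f^{\varrho v} - \mean{v}f^{\varrho}\bigr) = \jump{v}\mean{p}.
\]
\item With $v^{\mathrm{num}} = \mean{v}$, the left-hand side reduces to $\jump{v}\mean{p} + \mean{v}\jump{p}$, which equals $\jump{pv}$ by the product rule once more.
\end{itemize}

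The main obstacle is the sign bookkeeping in translating \eqref{euler_noncons} into the form \eqref{nonconservative_system}. The matrix $\vec{H}^{\mathrm{num}}$ in \eqref{euler_noncons_fluxes} is written with positive entries, while the PDE has $+\varrho\,\partial_x\phi$ and $-v\,\partial_x p$ on the left. I would therefore state explicitly that the scheme uses $\vec{H} = \mathrm{diag}$-type entries $(\varrho, -v)$, with the numerical versions signed accordingly, so that the displayed condition above is the correct instance of \eqref{TadmorDef_form3}. The rest is routine algebra.
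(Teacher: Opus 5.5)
Your proposal is correct and follows essentially the same argument as the paper: instantiate the third-form condition \eqref{TadmorDef_form3} with $\vec{\omega}_{\varrho E}$ and $\psi_{\varrho E} = pv$, then close it using the KEP relations \eqref{eq:kep_conditions} and the discrete product rule $\jump{pv} = \jump{p}\mean{v} + \mean{p}\jump{v}$. Your sign remark is also right, since the displayed condition corresponds to the entry $-v^{\mathrm{num}}$ in $\vec{H}^{\mathrm{num}}$ (matching the $-v\,\partial_x p$ term); write that sign from the start rather than first contracting with $+v^{\mathrm{num}}\jump{p}$.
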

\begin{remark}
Note that the density flux associated with the gravity term, $\varrho^{\mathrm{num}}$, is not uniquely determined and therefore represents a degree of freedom, which can be used to enforce additional structure-preserving properties, e.g., the pressure-equilibrium-preserving (PEP) property~\cite{artiano2025structurepreservinghighordermethodscompressible,Shima2021,Ranocha2022}, or as we will see in the following, entropy conservation.
\end{remark}
\subsubsection{Entropy conservation}

Now we seek conditions for entropy conservation. The associated flux and entropy variables are
\begin{equation}
    F_{\varrho s} = -\varrho s v,
    \quad
    \vec{\omega}_{\varrho s} = \left (
        -\log \left ( \frac{\varrho e (\gamma-1)}{\varrho^{\gamma}}\right ) + \gamma,
         0,
         -\frac{\varrho}{\varrho e}
    \right ),
\end{equation}
and the entropy potential is $\psi_{\varrho s} = 0$.
By definition~\eqref{TadmorDef_form3}, an EC scheme is obtained if
\begin{equation}
    -\jump{\log \left ( \frac{\varrho e (\gamma-1)}{\varrho^{\gamma}}\right )} f^{\varrho}
    - \jump{\frac{\varrho}{\varrho e}} f^{\varrho e}
    - \mean{\frac{\varrho}{\varrho e}} v^{\mathrm{num}} \jump{p} = 0.
\end{equation}
Given that $p = (\gamma-1) \varrho e$, we can rewrite this condition as
\begin{equation}
    \jump{\log ( \varrho)}f^{\varrho}
    + \frac{1}{\gamma - 1} \jump{\log(\varrho/p)} f^{\varrho}
    - \jump{\frac{\varrho}{p}}f^{\varrho e}
    - \mean{\frac{\varrho}{p}}v^{\mathrm{num}}\jump{p} = 0.
\end{equation}
Splitting the contributions of the internal energy flux, we observe that the numerical fluxes
\begin{equation}
\begin{aligned}
& f^{\varrho}= \mean{\varrho}_{\log} \mean{v},\\
& f^{\varrho e} = \frac{1}{\gamma-1}\frac{\mean{\varrho}_{\log}}{\mean{\varrho/p}_{\log}} \mean{v} + \mean{v}\mean{p},\\
& v^{\mathrm{num}} = \mean{v},
\end{aligned}
\end{equation}
are entropy conservative, where $\mean{\varrho}_{\log} = \jump{\varrho} / \jump{\log \varrho}$ is the logarithmic mean~\cite{ismail2009affordable}.
Since the momentum fluxes are a degree of freedom, we can set $f^{\varrho v} = \mean{v} f^{\varrho} + \mean{p}$ to achieve KEP.
\begin{theorem}
The semi-discretization~\eqref{euler_noncons_semi} is entropy-conservative and KEP if
\begin{equation}
\label{eq:ec_and_kep_fluxes}
\begin{aligned}
    & f^{\varrho}= \mean{\varrho}_{\log} \mean{v},\\
    & f^{\varrho v} = \mean{v} f^{\varrho} + \mean{p},\\
    & f^{\varrho e} = \frac{1}{\gamma-1}\frac{f^{\varrho}}{\mean{\varrho/p}_{\log}} + \mean{v}\mean{p},\\
    & v^{\mathrm{num}} = \mean{v}.
\end{aligned}
\end{equation}
If in addition $\varrho^{\mathrm{num}} = \mean{\varrho}_{\log}$, the numerical flux conserves the total energy $\varrho E$.
\end{theorem}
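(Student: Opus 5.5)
The plan is to check three local two-point conditions separately, each reduced to an algebraic identity using the third-form characterization \eqref{TadmorDef_form3}. By the Third form Theorem (i.e.\ Lemma~\ref{algebraic_lemma_nc_form3}) it suffices to verify, with equality, the interface condition
\begin{equation*}
\jump{\vec{\omega}} \cdot \vec{f}^\mathrm{num} - \mean{\vec{\omega}} \cdot \vec{H}^{\mathrm{num}}\jump{\vec{g}} = \jump{\psi}
\end{equation*}
once for $U_{\varrho s}$ and once for $U_{\varrho E}$. KEP is a property of the fluxes themselves. Since $f^{\varrho v} = \mean{v} f^{\varrho} + \mean{p}$, $f^{\varrho} = \varrho^{\mathrm{num}}\mean{v}$ with $\varrho^{\mathrm{num}} = \mean{\varrho}_{\log}$, and $v^{\mathrm{num}} = \mean{v}$, the conditions \eqref{eq:kep_conditions} hold by construction.

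For entropy conservation, the momentum component of $\vec{\omega}_{\varrho s}$ vanishes. Also, the $\phi$-column of $\vec{H}^{\mathrm{num}}$ only acts on the momentum row, so it contributes $\mean{\omega_2}\varrho^{\mathrm{num}}\jump{\phi} = 0$. Thus neither $f^{\varrho v}$ nor $\varrho^{\mathrm{num}}$ enters, and the condition reduces to the one displayed just before the statement:
\begin{equation*}
\jump{\log \varrho} f^{\varrho} + \tfrac{1}{\gamma-1}\jump{\log(\varrho/p)} f^{\varrho} - \jump{\varrho/p} f^{\varrho e} - \mean{\varrho/p}\mean{v}\jump{p} = 0 .
\end{equation*}
I will insert the proposed fluxes and use the definition of the logarithmic mean in the form $\jump{\log a}\mean{a}_{\log} = \jump{a}$. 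This turns the first term into $\jump{\varrho}\mean{v}$. The second term is cancelled exactly by the first part of $f^{\varrho e}$ against $\jump{\varrho/p}$, since $\jump{\log(\varrho/p)}/\mean{\varrho/p}_{\log}\cdot$ matches $\jump{\varrho/p}$ times the same prefactor.

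The remaining terms must cancel:
\begin{equation*}
\jump{\varrho}\mean{v} - \jump{\varrho/p}\mean{p}\mean{v} - \mean{\varrho/p}\jump{p}\mean{v}.
\end{equation*}
By the discrete product rule $\jump{(\varrho/p)\,p} = \jump{\varrho/p}\mean{p} + \mean{\varrho/p}\jump{p}$, this is $\mean{v}\bigl(\jump{\varrho} - \jump{\varrho}\bigr) = 0$. This is the step needing the most care: the prefactor $1/(\gamma-1)$ in $f^{\varrho e}$ has to be placed consistently with the rewriting of $\vec{\omega}_{\varrho s}$ via $p = (\gamma-1)\varrho e$. The factor $\gamma-1$ hidden in $\jump{\varrho/\varrho e} = (\gamma-1)\jump{\varrho/p}$ must be tracked, and I would first double-check the rewritten condition term by term before substituting. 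The degenerate cases $\jump{\varrho}=0$ or $\jump{\varrho/p}=0$ are handled by the usual continuous extension of the logarithmic mean.

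Finally, for total energy I will invoke Theorem~\ref{thm:total_energy_conservation}. The fluxes satisfy \eqref{eq:kep_conditions} exactly when $f^{\varrho} = \varrho^{\mathrm{num}}\mean{v}$, and this holds iff $\varrho^{\mathrm{num}} = \mean{\varrho}_{\log}$, given $f^{\varrho} = \mean{\varrho}_{\log}\mean{v}$. The internal energy flux $f^{\varrho e}$ does not matter for this condition, because $\omega_{\varrho E,3} = 1$ is constant and its jump vanishes. Hence the additional choice gives total energy conservation, while the entropy argument above is unaffected because $\varrho^{\mathrm{num}}$ never entered it.
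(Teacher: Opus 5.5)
Your proposal is correct and follows essentially the same route as the paper. You apply the third-form condition \eqref{TadmorDef_form3} to $U_{\varrho s}$, where $\omega_2 = 0$ removes both $f^{\varrho v}$ and the gravity term, and rewrite it via $p = (\gamma-1)\varrho e$. You then cancel terms with $\jump{a} = \mean{a}_{\log}\jump{\log a}$ and the discrete product rule $\jump{\varrho} = \jump{\varrho/p}\mean{p} + \mean{\varrho/p}\jump{p}$. Finally, you get total energy conservation from Theorem~\ref{thm:total_energy_conservation} once $\varrho^{\mathrm{num}}$ matches the density mean in $f^{\varrho}$.

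Two minor clean-ups. The cancellation you wrote in garbled form should read $\jump{\varrho/p}/\mean{\varrho/p}_{\log} = \jump{\log(\varrho/p)}$. Also, the displayed condition you rely on is the correct one for the term $-v\,\partial_x p$, so the third-row entry of $\vec{H}^{\mathrm{num}}$ effectively acts as $-v^{\mathrm{num}}$; this sign is exactly what makes the final $\mean{v}$-terms cancel.
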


Next, we investigate entropy-stable methods.
To do so, we introduce the interface velocity as
\begin{equation}
V_\mathrm{int} = \mean{v} -\beta \jump{p}, \quad \beta \in \mathbb{R}^+,
\label{interface_velocity}
\end{equation}
which comes from solving the Rankine-Hugoniot condition in the limit of incompressibility~\cite{AControlVolumeModeloftheCompressibleEulerEquationswithaVerticalLagrangianCoordinate}.
In the numerical experiments reported later, we set $\beta = \frac{1}{2 \mean{\varrho}|v|}$ and
\begin{equation}
f^{\varrho v} = \left (\mean{\varrho v} - \frac{\jump{\varrho v}}{2}\text{sign}(V_\mathrm{int}) \right ) V_\mathrm{int} + \mean{p} - \frac{1}{2}|v| \mean{\varrho} \jump{v},
\end{equation}
where $|v| = \max(|v_-|, |v_+|)$.
\begin{theorem}
The semi-discretization~\eqref{euler_noncons_semi} is entropy-stable if
\begin{equation}
    \label{eq:es_flux}
    \begin{aligned}
    & f^{\varrho}= \left (\mean{\varrho}_{\log} - \frac{\jump{\varrho}}{2} \operatorname{sign}(V_{\mathrm{int}}) \right ) V_{\mathrm{int}},\\
    & f^{\varrho v} = \text{any consistent discretization of the momentum flux},\\
    & f^{\varrho e} = \frac{1}{\gamma-1}\frac{f^{\varrho}}{ \mean{\varrho/p}_{\log}}  + \mean{p} V_{\mathrm{int}},\\
    & v^{\mathrm{num}} = V_{\mathrm{int}},\\
    \end{aligned}
\end{equation}
where $V_{\mathrm{int}}$ is given by~\eqref{interface_velocity}.
\end{theorem}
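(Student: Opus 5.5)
The plan is to verify the local condition~\eqref{TadmorDef_form3} for the entropy pair $(U_{\varrho s}, F_{\varrho s})$ as an inequality, and then invoke the theorem for the third form. That theorem turns the local condition into entropy stability of~\eqref{euler_noncons_semi} in the sense of Definition~\ref{def_entropy_conservation}. Since $\psi_{\varrho s}=0$, the goal is
\begin{equation*}
\jump{\vec{\omega}_{\varrho s}} \cdot \vec{f}^{\mathrm{num}} - \mean{\vec{\omega}_{\varrho s}} \cdot \vec{H}^{\mathrm{num}} \jump{\vec{g}} \le 0 .
\end{equation*}

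First I simplify using the structure of $\vec{\omega}_{\varrho s}$.

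\begin{itemize}
\item The momentum component of $\vec{\omega}_{\varrho s}$ vanishes. Hence $f^{\varrho v}$ drops out completely, which is why it may be chosen freely. The gravity column of $\vec{H}^{\mathrm{num}}$ (the $\varrho^{\mathrm{num}}\jump{\phi}$ term) also drops out.
\item I rewrite the remaining entropy variables with $p=(\gamma-1)\varrho e$. This gives $\jump{\omega_1} = (\gamma-1)\jump{\log\varrho} + \jump{\log(\varrho/p)}$ and $\omega_3 = -(\gamma-1)\varrho/p$.
\item Dividing by $\gamma-1>0$, exactly as in the entropy-conservative computation above, the condition becomes
\begin{equation*}
\jump{\log\varrho} f^{\varrho} + \frac{1}{\gamma-1}\jump{\log(\varrho/p)} f^{\varrho} - \jump{\varrho/p} f^{\varrho e} - \mean{\varrho/p}\, v^{\mathrm{num}} \jump{p} \le 0 .
\end{equation*}
Here the nonconservative energy term is taken with the sign of the PDE term $-v\partial_x p$, as in the EC derivation.
\end{itemize}

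Next I insert the fluxes~\eqref{eq:es_flux} and group the terms.

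\begin{itemize}
\item The first part of $f^{\varrho e}$, namely $f^{\varrho}/((\gamma-1)\mean{\varrho/p}_{\log})$, multiplied by $-\jump{\varrho/p}$ gives $-\frac{1}{\gamma-1}\jump{\log(\varrho/p)} f^{\varrho}$. This uses the definition of the logarithmic mean, and it cancels the second term exactly, whatever $f^\varrho$ is.
\item The second part of $f^{\varrho e}$, namely $\mean{p}V_{\mathrm{int}}$, combines with the nonconservative term, where $v^{\mathrm{num}}=V_{\mathrm{int}}$. The discrete product rule $\jump{\varrho} = \jump{(\varrho/p)\,p} = \mean{\varrho/p}\jump{p} + \jump{\varrho/p}\mean{p}$ turns these into $-V_{\mathrm{int}}\jump{\varrho}$.
\item What remains is $\jump{\log\varrho} f^{\varrho} - V_{\mathrm{int}}\jump{\varrho}$.
\end{itemize}

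Finally I substitute $f^{\varrho} = (\mean{\varrho}_{\log} - \tfrac12\jump{\varrho}\operatorname{sign}(V_{\mathrm{int}}))V_{\mathrm{int}}$. Since $\jump{\log\varrho}\mean{\varrho}_{\log} = \jump{\varrho}$, the central part cancels $-V_{\mathrm{int}}\jump{\varrho}$. The left-hand side then reduces to
\begin{equation*}
-\tfrac12 |V_{\mathrm{int}}|\, \jump{\varrho}\jump{\log\varrho} \le 0,
\end{equation*}
which is nonpositive because $\log$ is monotone. Multiplying back by $\gamma-1>0$ preserves the inequality, and the third-form theorem then yields entropy stability.

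The main obstacle is sign and scaling bookkeeping rather than any real estimate. One point is the sign convention of the nonconservative energy term $-v\partial_x p$ relative to the $+\vec{H}^{\mathrm{num}}\jump{\vec{g}}$ ansatz. The other is the factor $\gamma-1$ between $\omega_3$ and $\varrho/p$. The product-rule cancellation to $-V_{\mathrm{int}}\jump{\varrho}$ works only with consistent signs, so I would check this step first, for instance by confirming that setting $V_{\mathrm{int}}=\mean{v}$ without the upwind term reproduces the entropy-conservative result.
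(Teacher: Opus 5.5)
Your proposal is correct and follows the same route as the paper. You reduce to the third-form condition~\eqref{TadmorDef_form3} with $\psi_{\varrho s}=0$, taking the nonconservative term with the sign of $-v\partial_x p$ exactly as the paper's own EC computation does, insert the fluxes, and use $\jump{a}=\mean{a}_{\log}\jump{\log a}$ together with the discrete product rule to reach $-\tfrac12 |V_{\mathrm{int}}|\jump{\varrho}\jump{\log\varrho}\le 0$; the paper states only this final identity, so your write-up just makes the intermediate cancellations explicit.
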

\begin{proof}
Splitting the fluxes into the entropy-conservative part and the dissipative part, we observe that the entropy stability condition~\eqref{TadmorDef_form3} is satisfied since
\begin{multline}
    \jump{\log ( \varrho)} f^{\varrho}
    + \frac{1}{\gamma - 1} \jump{\log(\varrho/p)} f^{\varrho}
    - \jump{\frac{\varrho}{p}} f^{\varrho e}
    - \mean{\frac{\varrho}{p}} v^{\mathrm{num}} \jump{p}
    \\
    =
    - \frac{1}{2} \jump{\log ( \varrho)} \jump{\varrho} \operatorname{sign}(V_{\mathrm{int}}) V_{\mathrm{int}}
    \le 0.
    \qedhere
\end{multline}
\end{proof}

\begin{remark}
    Note that also here the momentum flux $f^{\varrho v}$ is not uniquely determined. Choosing $f^{\varrho v} = f^{\varrho} \mean{v} + \mean{p}$ yields a numerical scheme that is entropy stable and KEP. Moreover, for $\beta = 0$, the numerical fluxes are also total energy conservative.
\end{remark}

\subsubsection{Another interpretation of kinetic energy-preserving and pressure equilibrium-preserving fluxes}

We have seen in Theorem~\ref{thm:total_energy_conservation} that KEP methods conserve the total energy when working with the internal energy as thermodynamic variable.
Next, we will present another interpretation of kinetic energy preservation~\cite{Jameson2008,Kuya2018,Ranocha2020,Ranocha2022, ranocha2018thesis} and pressure equilibrium preservation~\cite{Shima2021,Ranocha2022}.
For simplicity, we set the gravity potential $\phi = 0$.
Since $p = (\gamma - 1) \varrho e$ for an ideal gas, the system is equivalent to
\begin{equation}
\begin{aligned}
    &\partial_t \varrho + \partial_x (\varrho v) = 0,\\
    &\partial_t (\varrho v) + \partial_x ( \varrho v^2 + p) = 0,\\
    &\partial_t p + \gamma \partial_x (p v) - (\gamma - 1) v \partial_x p = 0.
    \label{euler_noncons_pressure}
\end{aligned}
\end{equation}
Pressure equilibrium-preserving (PEP) methods~\cite{Shima2021,Ranocha2022} keep states of the form $v \equiv \mathrm{const}$ and $p \equiv \mathrm{const}$, where the compressible Euler equations reduce to the linear advection equation $\partial_t \varrho + v \partial_x \varrho = 0$.
A consistent discretization of the pressure equation (or equivalently the internal energy equation) yields $\partial_t p = 0$ for these states.
Moreover, the KEP conditions \eqref{eq:kep_conditions} reduce the semi-discrete momentum equation in this case to
\begin{equation}
\begin{aligned}
    -\Delta x \, \varrho_i \partial_t v_i
    &=
    - \Delta x \, \partial_t (\varrho v)_i
    + \Delta x \, v_i \partial_t \varrho
    =
    f^{\varrho v}_{i+1/2} - f^{\varrho v}_{i-1/2}
    - v_i \left ( f^{\varrho}_{i+1/2} - f^{\varrho}_{i-1/2} \right )
    \\
    &=
    \left( v f^{\varrho}_{i + 1/2} + p \right)
    - \left( v f^{\varrho}_{i-1/2} + p  \right)
    - v \left ( f^{\varrho}_{i+1/2} - f^{\varrho}_{i-1/2} \right )
    =
    0,
\end{aligned}
\end{equation}
since $v \equiv \mathrm{const}$ and $p \equiv \mathrm{const}$.
Thus, discretizing the pressure or internal energy equation consistently and using a KEP method automatically results in a PEP method.
Since the KEP method conserves the total energy, the induced total energy flux is given by \eqref{F_num_form1} as
\begin{equation}
\label{eq:total_energy_flux_from_pressure_and_kep}
\begin{aligned}
    F^\mathrm{num}_{\varrho E}
    &=
    \mean{F_{\varrho E}}
    + \mean{\vec{\omega}_{\varrho E}} \cdot \vec{f}^{\mathrm{num}}
    - \mean{\vec{\omega}_{\varrho E} \cdot \vec{f}}
    - \frac{1}{4} \jump{\vec{\omega}_{\varrho E}} \cdot \vec{H}^\mathrm{num} \jump{\vec{g}}
    \\
    &=
    - \frac{1}{2} \mean{v^2} f^{\varrho}
    + \mean{v} f^{\varrho v}
    + f^{\varrho e}
    - \mean{p v}
    \\
    &=
    \left( \mean{v}^2 - \frac{1}{2} \mean{v^2} \right) f^{\varrho}
    + f^{\varrho e}
    + \mean{p} \mean{v}
    - \mean{p v}
\end{aligned}
\end{equation}
because of the KEP conditions \eqref{eq:kep_conditions}.
Using $f^{\varrho e} = f^{\varrho e, 1} + \mean{p} \mean{v}$, where $f^{\varrho e, 1}$ is a consistent discretization of $\varrho e v$, yields
\begin{equation}
    F^\mathrm{num}_{\varrho E}
    =
    \left( \mean{v}^2 - \frac{1}{2} \mean{v^2} \right) f^{\varrho}
    + f^{\varrho e, 1}
    + 2 \mean{p} \mean{v}
    - \mean{p v}.
\end{equation}
Using the product mean
\begin{equation}
    \prodmean{a \cdot b}
    =
    2 \mean{a} \mean{b} - \mean{ab}
    =
    \frac{1}{2} \left( a_- b_+ + a_+ b_- \right)
\end{equation}
and the entropy-conservative fluxes \eqref{eq:ec_and_kep_fluxes} yields the induced total energy flux
\begin{equation}
    F^\mathrm{num}_{\varrho E}
    =
    \frac{1}{2} \mean{\varrho}_{\log} \mean{v} \prodmean{v \cdot v}
    + \frac{1}{\gamma-1}\frac{\mean{\varrho}_{\log}}{\mean{\varrho/p}_{\log}} \mean{v}
    + \prodmean{p \cdot v},
\end{equation}
which is exactly the total energy flux of the KEP and PEP fluxes constructed in~\cite{Ranocha2020,Ranocha2022}.
This fits to the uniqueness result of numerical fluxes that are EC, KEP, PEP, and do not contain an influence of the pressure in the density flux~\cite[Theorem~1]{Ranocha2022}.
Similarly, using $f^{\varrho} = \mean{\varrho} \mean{v}$ and $f^{\varrho e, 1} = \mean{\varrho e} \mean{v}$ yields
\begin{equation}
    F^\mathrm{num}_{\varrho E}
    =
    \frac{1}{2} \mean{\varrho}\mean{v} \prodmean{v \cdot v}
    + \mean{\varrho e} \mean{v}
    + \prodmean{p \cdot v},
\end{equation}
i.e., the total energy flux of the KEP and PEP fluxes of~\cite{Shima2021}.
We will use this derivation of KEP and PEP fluxes for more complicated systems in the future.

\subsection{General procedure to construct affordable entropy conservative/stable fluxes}
The procedure to construct affordable entropy-conservative or entropy-stable fluxes is conceptually similar to the approach presented in~\cite{Ranocha2018}.
Once a semi-discretization is fixed, the corresponding entropy condition can be applied, following the same steps described in~\cite[Procedure 4.1]{Ranocha2018}.

However, as shown in the previous chapter, the choice of semi-discretization is often not unique and may not always guarantee the existence of an EC scheme.
In particular, rewriting the equations at the continuous level using the product rule can facilitate, in some cases, the derivation of numerical fluxes and help avoid issues related to the non-existence of the parameter $\alpha$, as in Section~\ref{sec:general_polynomial_equation}.
This difficulty could in principle be avoided by using the fourth form~\eqref{FV_form4}, which we consider less convenient in practice for the set of equations investigated.
For this reason and some considerations about split forms given in the next section, the fluxes derived in Section~\ref{sec:numerical_fluxes} rely on the linear combination of the first and third form~\eqref{TadmorDef_convex}.


\section{Summation-by-parts operators and nonconservative systems}
\label{sec:sbp_cartesian}
In this section, we first briefly review the concept of SBP operators.
Then, we extend the results obtained for the low-order FV discretization in Section~\ref{sec:nonconservative_systems} to general high-order SBP operators.
The notation closely follows~\cite{Ranocha2018, ranocha2018thesis}.

\subsection{Summation-by-parts operators}

Following~\cite{Hicken2016_SBP,Ranocha2018}, we recall the definition of an SBP operator.
\begin{definition}[SBP operator]
    An SBP operator on a $d$-dimensional element $E$ with order of accuracy $p \in \mathbb{N}$ consists of the following components
    \begin{itemize}
        \item Derivative operators $\mtx{D}^j, j \in \{1, \dots d \}$, approximating the partial derivative in the $j$-th coordinate direction; they are exact for polynomials of degree $\leq p$.
        \item A mass matrix $\mtx{M}$, approximating the $L_2$ scalar product on $E$.
        \item A restriction operator $\mtx{R}$, that projects the solution to the boundary nodes.
        \item A boundary mass matrix $\mtx{B}$ approximating the $L_2$ scalar product on $\partial E$.
        \item Multiplication operators $\mtx{N}^j$ for $j \in \{1, \dots d \}$, performing multiplication of functions on the boundary $\partial E$ by the $j$-th component $n_j$ of the outer normal vector $\vec{n}$ at the boundary $\partial E$.
    \end{itemize}
    Together, these components have to satisfy the SBP property
    \begin{equation}\label{sbp_property}
        \mtx{M} \, \mtx{D}^j + (\mtx{D}^j)^T \mtx{M} = \mtx{R}^T \mtx{B} \, \mtx{N}^j \mtx{R}.
    \end{equation}
\end{definition}

We consider a nonconservative hyperbolic system in $d$ dimensions
\begin{equation}
\partial_t \vec{u} + \sum_{j=1}^d \partial_{x_j} \vec{f}^j(\vec{u}) + \sum_{j=1}^d \mtx{H}^j (\vec{u}) \partial_{x_j} \vec{g}^j(\vec{u}) = 0.
\label{nonconservative_system_d}
\end{equation}
An SBP semi-discretization of~\eqref{nonconservative_system_d} is given by
\begin{equation}
    \partial_t \vec{u} = - \vec{\mathrm{VOL}} - \vec{\mathrm{SURF}},
    \label{semi_SBP}
\end{equation}
where $\vec{\mathrm{VOL}}$ contains the volume terms consistent with $\partial_{x_j} \vec{f}^j(\vec{u}) + \mtx{H}^j \cdot \partial_{x_j} \vec{g}^j(\vec{u})$ and $\vec{\mathrm{SURF}}$ contains additional surface terms coupling neighboring elements.
We assume the nodal basis to always admit sufficient boundary nodes, e.g., tensor product Gauss-Lobatto-Legendre nodes in quadrilateral/hexahedral elements.

\subsection{Nonconservative split forms}
In this section, we present the semi-discretization of the volume terms for high-order SBP operators for the nonconservative terms, as a natural extension of the finite volume methods presented in Section~\ref{sec:nonconservative_systems}.
Gassner et al.~\cite{gassner2016split} showed how the flux-differencing framework introduced by Fisher and Carpenter~\cite{FISHER2013518} generates common split forms formulations such as~\cite{KENNEDY20081676, ducros2000}, given specific choices of the numerical volume flux.
A systematic way to generate and implement split forms is of great interest for high-order discretizations based on SBP operators.
In fact, a simple way to obtain entropy-preserving methods is to prove the same property at the continuous level using only integration by parts, and then replace derivatives by SBP operators (see for example~\cite{RanochaSGN2025,ricardo2024thermodynamicconsistencystructurepreservationsummation,bleecke2025asymptoticpreservingenergyconservingmethodshyperbolic,gassner2016shallowwater}).
This is often far from trivial, or it may require non-trivial splittings~\cite{DEMICHELE2025114262}.
Moreover, split forms can improve the robustness of under-resolved simulations even without provable entropy stability properties~\cite{KENNEDY20081676,gassner2016split, Shima2021, sjogreen2017skew, klose2020assessing}.

The first discretization~\eqref{FV_form1} is based on a pointwise evaluation of the nonconservative product.
A natural SBP generalization is
\begin{equation}
    \vec{H}^j(\vec{u}(x_i)) \cdot \partial_j \vec{g}^j(\vec{u}(x)) \Big |_{x = x_i}
    \approx
    \sum_{k = 1}^N \mtx{D}^j_{i,k} \vec{H}^j(\vec{u}_i) \left (\vec{g}^j(\vec{u}_k) - \vec{g}^j(\vec{u}_i) \right ),
    \label{volume_noncons1}
\end{equation}
where the term $\vec{g}^j(\vec{u}_i)$ does not contribute to the sum, since it is constant with respect to the summation index $k$ and thus mapped to zero by the consistent SBP operator $\mtx{D}^j$. However, this term is included to maintain the jump term in the volume flux, similarly to the approach used in the finite volume scheme for $\vec{H}_i\jump{\vec{g}}$, which we will later justify.
The second discretization~\eqref{FV_form2} is analogous to the flux-differencing form for conservative terms, i.e.,
\begin{equation}
    \vec{H}^j(\vec{u}(x_i)) \cdot \partial_j \vec{g}^j(\vec{u}(x)) \Big |_{x = x_i}
    \approx
    \sum_{k = 1}^N 2 \mtx{D}^j_{i,k} \vec{H}^j(\vec{u}_i) \vec{g}^{\mathrm{vol},j}(\vec{u}_i, \vec{u}_k),
    \label{volume_noncons2}
\end{equation}
where we denoted the numerical flux approximating $\vec{g}^j$ as $\vec{g}^{\mathrm{vol},j}$ instead of $\vec{g}^{\mathrm{num},j}$ to emphasize its role in the volume terms.
The third discretization~\eqref{FV_form3} extends naturally to high-order SBP operators via flux differencing, i.e., the nonconservative volume terms read~\cite{RUEDARAMIREZ2024112607, artiano2025structurepreservinghighordermethodscompressible}
\begin{equation}
    \vec{H}^j(\vec{u}(x_i)) \cdot \partial_j \vec{g}^j(\vec{u}(x)) \Big |_{x = x_i}
    \approx
    \sum_{k = 1}^N \mtx{D}^j_{i,k} \vec{H}^{\mathrm{vol},j}\left (\vec{u}_i, \vec{u}_k \right ) \left (\vec{g}^{j}(\vec{u}_k) - \vec{g}^{j}(\vec{u}_i) \right ).
\label{volume_noncons3}
\end{equation}
The fourth discretization~\eqref{FV_form4} includes a conservative term that we discretize via flux differencing as usual, while the second term is similar to the second form~\eqref{volume_noncons2}, i.e.,
\begin{equation}
    \vec{H}^j(\vec{u}(x_i)) \cdot \partial_j \vec{g}^j(\vec{u}(x)) \big |_{x = x_i}
    \approx
    \sum_{k = 1}^N 2 \mtx{D}^j_{i,k} \left ( \vec{H}\vec{g}\right )^{\mathrm{vol},j}(\vec{u}_i, \vec{u}_k) - 2 \mtx{D}^j_{i,k} \vec{H}^{\mathrm{vol},j}(\vec{u}_i,\vec{u}_k) \vec{g}^j(\vec{u}_i).
\label{volume_noncons4}
\end{equation}
Following~\cite{gassner2016split}, we show these forms presented are naturally related to split forms of the nonconservative product or the discrete equivalent of the product rule for the nonconservative product, i.e,
\begin{equation}
h(u) \partial_x g(u) = \frac{1}{2} \Bigl ( \partial_x \bigl (h(u)g(u) \bigr ) +h(u) \partial_x g(u) - g(u) \partial_x h(u) \Bigr ),
\label{split_form_noncons_continuous}
\end{equation}
and
\begin{equation}
h(u) \partial_x g(u) = \partial_x (h(u)g(u)) - g(u) \partial_x h(u),
\label{discrete_rule_noncons}
\end{equation}
where we dropped the vector-matrix notation, to work with scalar functions $h(u)$ and $g(u)$, i.e. $m = n = 1$, for the sake of simplicity.
We will restrict our analysis to quadratic terms generated by $\partial_x  ( hg  )$.
The first and second discretization for $g^{\mathrm{vol}} = \mean{g}$ are the least interesting as they are equivalent to the strong form.
We encapsulate the different split-forms in the following two lemmas.
\begin{lemma}[Discrete nonconservative split-forms]
    Using the arithmetic mean in the volume fluxes, it is possible to recover the strong form for the first and second forms and the split form~\eqref{split_form_noncons_continuous} for the third volume discretization~\eqref{volume_noncons3}.
    Specifically, we have
    \begin{equation}
    \begin{aligned}
        \sum_{k=1}^N \mtx{D}_{i,k} h_i \jump{g}_{i,k}
        &=
        (\mtx{H} \mtx{D} \vec{g})_i,
        \\
        \sum_{k=1}^N 2 \mtx{D}_{i,k} h_i g^{\mathrm{vol}}(u_i,u_k)
        =
        \sum_{k=1}^N 2\mtx{D}_{i,k}H_i \mean{g}_{i,k}
        &=
        (\mtx{H} \mtx{D} \vec{g})_i,
        \\
        \sum_{k=1}^N  \mtx{D}_{i,k} h^{\mathrm{vol}} (u_i,u_k) \jump{g}_{i,k}
        =
        \sum_{k=1}^N \mtx{D}_{i,k} \mean{h}_{i,k} \jump{g}_{i,k}
        &=
        \frac{1}{2} \left (\mtx{D} \mtx{H} \vec{g} + \mtx{H}  \mtx{D} \vec{g} - \mtx{G} \mtx{D} \vec{h} \right )_i,
    \end{aligned}
    \end{equation}
    where $\vec{a} = (a_1, \dots, a_N)$, $\mtx{A} = \text{diag}(\vec{a})$, and
    \begin{equation}
        \begin{aligned}
        \mean{a}_{i,k} := \frac{a_k + a_i}{2}, \quad \jump{a}_{i,k} := a_k - a_i.
        \end{aligned}
    \end{equation}
\end{lemma}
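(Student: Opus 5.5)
The plan is a direct componentwise computation. The only property of $\mtx{D}$ needed is consistency, i.e., exactness for constants, so that $\sum_{k=1}^N \mtx{D}_{i,k} = 0$ for every row $i$. The SBP property itself is not used. All three identities follow by expanding the means and jumps and separating terms that depend on $k$ from terms that depend only on $i$.

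\emph{First identity.} Write $\jump{g}_{i,k} = g_k - g_i$ and split the sum as
\begin{equation*}
\sum_k \mtx{D}_{i,k} h_i \jump{g}_{i,k} = h_i \sum_k \mtx{D}_{i,k} g_k - h_i g_i \sum_k \mtx{D}_{i,k}.
\end{equation*}
The second term vanishes by consistency, and the first is $(\mtx{H}\mtx{D}\vec{g})_i$.

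\emph{Second identity.} Similarly,
\begin{equation*}
2\sum_k \mtx{D}_{i,k} h_i \mean{g}_{i,k} = h_i \sum_k \mtx{D}_{i,k} g_k + h_i g_i \sum_k \mtx{D}_{i,k} = (\mtx{H}\mtx{D}\vec{g})_i.
\end{equation*}
The statement writes $H_i$ for $h_i$ in this line, and I would read it that way.

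\emph{Third identity.} This is the only step needing care with bookkeeping. Expand
\begin{equation*}
\mean{h}_{i,k}\jump{g}_{i,k} = \tfrac12 (h_k + h_i)(g_k - g_i) = \tfrac12 \bigl( h_k g_k + h_i g_k - h_k g_i - h_i g_i \bigr).
\end{equation*}
Multiply by $\mtx{D}_{i,k}$ and sum over $k$, treating each of the four terms separately:
\begin{itemize}
\item $\sum_k \mtx{D}_{i,k} h_k g_k = (\mtx{D}\mtx{H}\vec{g})_i$;
\item $h_i \sum_k \mtx{D}_{i,k} g_k = (\mtx{H}\mtx{D}\vec{g})_i$;
\item $g_i \sum_k \mtx{D}_{i,k} h_k = (\mtx{G}\mtx{D}\vec{h})_i$;
\item $h_i g_i \sum_k \mtx{D}_{i,k} = 0$ by consistency.
\end{itemize}
Collecting these with the factor $\tfrac12$ and the signs $(+,+,-)$ gives
\begin{equation*}
\tfrac12 \bigl( \mtx{D}\mtx{H}\vec{g} + \mtx{H}\mtx{D}\vec{g} - \mtx{G}\mtx{D}\vec{h} \bigr)_i,
\end{equation*}
which is the discrete counterpart of the split form~\eqref{split_form_noncons_continuous}.

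No step is a real obstacle. The only point to state explicitly is that the vanishing of the constant terms relies on $\mtx{D}$ being exact for constants, which is part of the SBP definition.
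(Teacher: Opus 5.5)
Your proposal is correct and follows essentially the same route as the paper: the paper's proof is a direct calculation using only that $\mtx{D}$ is exact for constants ($\sum_{k} \mtx{D}_{i,k} = 0$), which is exactly your term-by-term expansion. Your bookkeeping for the third identity checks out, and reading $H_i$ as $h_i$ in the second line is the right interpretation.
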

\begin{proof}
The proof follows from direct calculation and the consistency of the derivative operator with respect to constants, i.e., $\sum_{k=1}^{N} \mtx{D}_{ik} a_i = 0, \forall a_i \in \mathbb{R}$.
\end{proof}

\begin{lemma}
    \label{lemma:split_form_fourth_form}
Using the arithmetic mean, the product of the arithmetic mean and the product mean in the volume flux of the fourth form~\eqref{volume_noncons4}, it is possible to recover the discrete product rule~\eqref{discrete_rule_noncons}, the split form~\eqref{split_form_noncons_continuous} and the strong form, i.e.,
\begin{equation}
    \begin{aligned}
    \sum_{k=1}^N  2 \mtx{D}_{i,k} \mean{hg}_{i,k} - 2 \mtx{D}_{i,k} \mean{h}_{i,k} g_i &= \left ( \mtx{D} \mtx{H} \vec{g} - \mtx{G} \mtx{D} \vec{H} \right )_i,\\
    \sum_{k=1}^N  2 \mtx{D}_{i,k} \mean{h}_{i,k}\mean{g}_{i,k} - 2 \mtx{D}_{i,k} \mean{h}_{i,k} g_i &= \frac{1}{2} \left (\mtx{D} \mtx{H} \vec{g} + \mtx{H}  \mtx{D} \vec{g} - \vec{G} \mtx{D} \mtx{h} \right )_i,\\
    \sum_{k=1}^N  2 \mtx{D}_{i,k} \prodmean{h\cdot g}_{i,k} - 2 \mtx{D}_{i,k} \mean{h}_{i,k} g_i &= \left ( \mtx{H} \mtx{D} \vec{g} \right )_i.
    \end{aligned}
\end{equation}
\end{lemma}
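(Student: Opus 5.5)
The plan is a direct row-by-row computation, using only the definitions of the means and the consistency of $\mtx{D}$ with respect to constants, $\sum_{k=1}^N \mtx{D}_{i,k} = 0$. For each of the three identities we expand the volume flux at the pair $(i,k)$. We then split the sum over $k$ into three kinds of terms: those depending only on $k$ (giving $(\mtx{D}\vec{a})_i$ for the corresponding nodal vector $\vec{a}$), those that factor as (quantity at $i$) times (quantity at $k$) (giving $a_i(\mtx{D}\vec{b})_i$), and those depending only on $i$ (which vanish by consistency). As in the preceding lemma, we write $\vec{h}=(h_1,\dots,h_N)$, $\vec{g}$ analogously, and use diagonal matrices $\mtx{H}=\operatorname{diag}(\vec{h})$ and $\mtx{G}=\operatorname{diag}(\vec{g})$, so that $\mtx{D}\mtx{H}\vec{g}$ is the derivative of the nodal product $hg$.

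\emph{First identity.} Insert $2\mean{hg}_{i,k} = h_k g_k + h_i g_i$ and $2\mean{h}_{i,k} g_i = h_k g_i + h_i g_i$. The $h_ig_i$ terms cancel exactly, leaving
\[
\sum_k \mtx{D}_{i,k}(h_kg_k - h_k g_i) = (\mtx{D}\mtx{H}\vec{g})_i - g_i(\mtx{D}\vec{h})_i .
\]
This is the discrete product rule~\eqref{discrete_rule_noncons}.

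\emph{Second identity.} Expand
\[
2\mean{h}_{i,k}\mean{g}_{i,k} = \tfrac12(h_kg_k + h_kg_i + h_ig_k + h_ig_i).
\]
Subtracting $h_kg_i + h_ig_i$ gives
\[
\tfrac12\bigl(h_kg_k - h_kg_i + h_ig_k - h_ig_i\bigr).
\]
By consistency the last term drops out, and the remaining three terms yield
\[
\tfrac12\bigl(\mtx{D}\mtx{H}\vec{g} - \mtx{G}\mtx{D}\vec{h} + \mtx{H}\mtx{D}\vec{g}\bigr)_i,
\]
which is the split form~\eqref{split_form_noncons_continuous}.

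\emph{Third identity.} Use $2\prodmean{h\cdot g}_{i,k} = h_ig_k + h_kg_i$. Subtracting $h_kg_i + h_ig_i$ leaves $h_ig_k - h_ig_i$. After summation with $\mtx{D}_{i,k}$ this is $h_i(\mtx{D}\vec{g})_i$, the strong form. This case is also equivalent to the earlier observation in Section~\ref{sec:nonconservative_systems} that the product mean recovers the first form.

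There is no genuine obstacle; the only care needed is bookkeeping. We must check which terms are constant in $k$ and hence annihilated, and we must read the notation in the statement correctly. In particular, the paper's $\mtx{G}\mtx{D}\vec{H}$ and $\vec{G}\mtx{D}\mtx{h}$ should be interpreted as $\mtx{G}\mtx{D}\vec{h}$, i.e., $g_i(\mtx{D}\vec{h})_i$.
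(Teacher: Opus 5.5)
Your proposal is correct and follows the paper's proof exactly. The paper proceeds by direct expansion of the means together with the consistency $\sum_k \mtx{D}_{i,k} = 0$, which is only needed for the second and third identities. Your reading of $\mtx{G}\mtx{D}\vec{H}$ and $\vec{G}\mtx{D}\mtx{h}$ as $g_i(\mtx{D}\vec{h})_i$ is the right way to interpret the notational slips in the statement.
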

\begin{proof}
The proof follows the same arguments as the previous lemma.
\end{proof}
\begin{remark}
The fourth form~\eqref{volume_noncons4} is also equivalent to the linear combination of the first and the third forms with coefficient $\alpha = 2/3$.
\end{remark}
Lemma~\ref{lemma:split_form_fourth_form} suggests that the fourth form~\eqref{volume_noncons4} is the most general formulation, as it recovers the previous ones for some particular choices of the volume fluxes.
However, in the construction of novel numerical fluxes in Section~\ref{sec:numerical_fluxes}, as shown previously, such a generalization was not required and would only add unnecessary complexity to the their construction.
A second important reason for not adopting this fourth form is that, in all cases considered in Section~\ref{sec:numerical_fluxes}, the numerical fluxes associated to $\vec{g}(\vec{u})$ always reduce to simple arithmetic means.
Consequently, the fourth form reduces to the third form, and therefore a linear combination of the first and third form already covers all the cases studied.
Moreover, in the case where $g = \text{const}$ one must ensure that $(\vec{H}\vec{g})^{\mathrm{vol}} - \vec{H}^{\mathrm{vol}}\vec{g}_i = 0$, which imposes a restriction on the choice of $(\vec{H}\vec{g})^{\mathrm{vol}}$.
We have summarized the different identities in Table~\ref{table_identities_splitforms}.
\begin{table}[htbp]
\centering
  \caption{Equivalence of different SBP semi-discretization of the volume terms for different forms and volume fluxes.}
  \label{table_identities_splitforms}
  \centering
  \footnotesize
    \begin{tabular}{c rll}
      \toprule
      \multicolumn{1}{c}{Form}
      & \multicolumn{1}{c}{Volume fluxes}
      & \multicolumn{1}{c}{Volume terms}
      & \multicolumn{1}{c}{Equivalent strong form}\\
      \midrule
1~\eqref{volume_noncons1} & ---\hspace{0.2cm}
& $\mtx{D}_{i,k} h_i \jump{g}_{i,k}$
& $\vec{H} \vec{D} \vec{g}$
\\
2~\eqref{volume_noncons2} & $g^{\mathrm{vol}} = \mean{g}$
& $\mtx{D}_{i,k} h_i \mean{g}_{i,k}$
& $\vec{H} \mtx{D} \vec{g}$
\\
3~\eqref{volume_noncons3} & $h^{\mathrm{vol}} = \mean{h}$
& $\mtx{D}_{i,k} \mean{h}_{i,k}\jump{g}_{i,k}$
& $\frac{1}{2} \left (\mtx{D} \vec{H} \vec{g} + \vec{H}  \mtx{D} \vec{g} - \vec{G} \mtx{D} \vec{h} \right )$
\\
4~\eqref{volume_noncons4} & $\left ( h g \right )^{\mathrm{vol}} = \mean{hg}$, $h^{\mathrm{vol}} = \mean{h}$
& $2 \mtx{D}_{i,k} \mean{hg}_{i,k} - 2 \mtx{D}_{i,k} \mean{h}_{i,k} g_i$
& $\mtx{D} \vec{H}\vec{g} - \vec{G} \mtx{D} \vec{h}$    \\
4~\eqref{volume_noncons4} & $\left ( h g \right )^{\mathrm{vol}} = \mean{h}\mean{g}$, $h^{\mathrm{vol}} = \mean{h}$
& $2 \mtx{D}_{i,k} \mean{h}\mean{g}_{i,k} - 2 \mtx{D}_{i,k} \mean{h}_{i,k} g_i$
& $\frac{1}{2} \left (\mtx{D} \vec{H} \vec{g} + \vec{H}  \mtx{D} \vec{g} - \vec{G} \mtx{D} \vec{h} \right )$\\
4~\eqref{volume_noncons4} & $\left ( h g \right )^{\mathrm{vol}} = \prodmean{h \cdot g}$, $h^{\mathrm{vol}} = \mean{h}$
& $2 \mtx{D}_{i,k} \prodmean{h \cdot g}_{i,k} - 2 \mtx{D}_{i,k} \mean{h}_{i,k} g_i$
& $\vec{H} \mtx{D} \vec{g}$
\\
      \bottomrule
    \end{tabular}
\end{table}

\subsubsection{Order of accuracy for nonconservative volume terms}
\begin{theorem}
    If the volume fluxes $(\vec{H}\vec{g})^{\mathrm{vol},j}$, $\vec{H}^{\mathrm{vol},j}$, $\vec{g}^{\mathrm{vol},j}$ are smooth and symmetric and the functions $\vec{H}^j$ and $\vec{g}^j$ are smooth, the forms~\eqref{volume_noncons2}, \eqref{volume_noncons3} and \eqref{volume_noncons4} are an approximation of the same order of accuracy as the derivative matrices $\mtx{D}^j$. Here the derivative matrices do not need to be given by SBP operators.
\end{theorem}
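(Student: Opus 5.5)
The plan is to follow the classical argument for flux differencing (Fisher--Carpenter, Gassner et al.~\cite{gassner2016split}): Taylor-expand the two-point volume fluxes in their second argument around the diagonal and use the polynomial exactness of $\mtx{D}^j$. Only two properties of $\mtx{D}^j$ are needed, namely exactness for polynomials of degree $\le p$ (in particular consistency, $\sum_k \mtx{D}^j_{i,k} = 0$) and linearity, which is why the SBP property~\eqref{sbp_property} plays no role. Dropping the superscript $j$ and working in one direction, fix a node $x_i$ and a symmetric smooth two-point function $\Phi(\vec{u}_-,\vec{u}_+)$ with $\Phi(\vec{u},\vec{u}) = \phi(\vec{u})$. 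The key auxiliary claim is
\begin{equation*}
    \sum_{k} 2 \mtx{D}_{i,k} \Phi(\vec{u}_i, \vec{u}_k) = \partial_x \phi(\vec{u}(x))\big|_{x=x_i} + \mathcal{O}(\Delta x^p).
\end{equation*}
To prove it, set $\chi_i(x) = \Phi(\vec{u}_i, \vec{u}(x))$, so that the sum is $2 (\mtx{D} \chi_i)_i$. Since $\chi_i$ is smooth, $2(\mtx{D}\chi_i)_i = 2 \chi_i'(x_i) + \mathcal{O}(\Delta x^p)$. By the chain rule, $\chi_i'(x_i) = \partial_2 \Phi(\vec{u}_i,\vec{u}_i)\, \partial_x \vec{u}(x_i)$. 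Differentiating $\Phi(\vec{u},\vec{u}) = \phi(\vec{u})$ gives $\partial_1\Phi + \partial_2\Phi = \phi'$ on the diagonal, and symmetry gives $\partial_1\Phi = \partial_2\Phi$ there. Hence $2\partial_2\Phi(\vec{u},\vec{u}) = \phi'(\vec{u})$, and the claim follows.

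With this claim, each form is handled in turn.
\begin{itemize}
    \item Form~\eqref{volume_noncons2}: $\vec{H}(\vec{u}_i)$ is independent of $k$, so it factors out of the sum. The claim with $\Phi = \vec{g}^{\mathrm{vol}}$ then gives $\vec{H}_i \partial_x \vec{g} + \mathcal{O}(\Delta x^p)$.
    \item Form~\eqref{volume_noncons3}: apply $\mtx{D}$ to the single-argument function $\eta_i(x) = \vec{H}^{\mathrm{vol}}(\vec{u}_i,\vec{u}(x))\,(\vec{g}(\vec{u}(x)) - \vec{g}(\vec{u}_i))$, evaluated at $x_i$. The second factor vanishes at $x_i$, so $\eta_i'(x_i) = \vec{H}^{\mathrm{vol}}(\vec{u}_i,\vec{u}_i)\,\partial_x\vec{g}(x_i) = \vec{H}_i\,\partial_x \vec{g}$ by consistency. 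Polynomial exactness again yields $(\mtx{D}\eta_i)_i = \eta_i'(x_i) + \mathcal{O}(\Delta x^p)$. Symmetry is not even needed in this case.
    \item Form~\eqref{volume_noncons4}: apply the claim to $(\vec{H}\vec{g})^{\mathrm{vol}}$ to get $\partial_x(\vec{H}\vec{g})$, and to $\vec{H}^{\mathrm{vol}}$, multiplied on the right by the constant $\vec{g}_i$, to get $(\partial_x\vec{H})\,\vec{g}_i$. Their difference is $\vec{H}\,\partial_x\vec{g}$ by the product rule, up to $\mathcal{O}(\Delta x^p)$.
\end{itemize}

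The main obstacle is making precise the step ``$\mtx{D}$ applied to a smooth function $\chi$ gives $\chi' + \mathcal{O}(\Delta x^p)$'' from polynomial exactness alone. I would do this by Taylor expanding $\chi$ around $x_i$ to degree $p$. The polynomial part is differentiated exactly. The remainder at the nodes is $\mathcal{O}(|x_k - x_i|^{p+1})$, and I would bound it using the standard scaling $|\mtx{D}_{i,k}| = \mathcal{O}(\Delta x^{-1})$ together with a fixed stencil size (or a fixed number of nodes per element) for local operators. The ``order of accuracy'' in the statement should therefore be understood in this local truncation-error sense, with smoothness of $\vec{u}$ assumed so that $\chi_i$ and $\eta_i$ are smooth with uniformly bounded derivatives in $i$.
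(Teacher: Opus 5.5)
Your proposal is correct and follows essentially the paper's argument. For the third form, the paper likewise reads the sum as $\mtx{D}$ applied to $x_k \mapsto h^{\mathrm{vol}}(u(x_i),u(x_k))\,(g(x_k)-g(x_i))$, then uses the product rule, the fact that the factor $g(x_k)-g(x_i)$ vanishes at $x_k = x_i$, and consistency of $h^{\mathrm{vol}}$. For the second and fourth forms, the paper only cites the standard flux-differencing accuracy result; your symmetry argument $2\,\partial_2\Phi(\vec{u},\vec{u}) = \phi'(\vec{u})$ proves exactly that result, so your write-up is a more complete version of the same proof.
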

\begin{proof}
    It suffices to consider the one-dimensional scalar case. For the sake of brevity and simplicity we will report the proof only for the third form~\eqref{volume_noncons3}.
    Note that the conservative terms appearing in the second and fourth form can be treated as standard conservative flux differencing terms, e.g.,~\cite{chen2017entropy} or~\cite[Lemma 3.17]{ranocha2018thesis}.
    Assuming the derivative operator is of order $p$, then
    \begin{equation}
        \begin{aligned}
        & \sum_{k = 1}^N \mtx{D}_{ik} h^{\mathrm{vol}}\left ( u_i, u_k \right ) \left (g_k - g_i \right ) = \sum_{k=1}^N \mtx{D}_{ik} h^{\mathrm{vol}}\left (u(x_i), u(x_k) \right ) \left (g(x_k) - g(x_i) \right )\\
        =& \frac{\partial h^{\mathrm{vol}}\left (u(x_i), u(x_k) \right ) g\left ( x_k \right )}{\partial x_k} \Bigg |_{x_k = x_i} - g(x_i) \frac{\partial h^{\mathrm{vol}}\left ( u(x_i), u(x_k)\right )}{\partial x_k}\Bigg |_{x_k = x_i} + \mathcal{O}\left (\Delta x^p \right )\\
        =& h^{\mathrm{vol}}\left (u(x_i), u(x_i) \right ) \frac{\partial g\left ( x_k \right )}{\partial x_k} \Bigg |_{x_k = x_i} + \mathcal{O}\left (\Delta x^p \right ) = h\left ( u  (x_i) \right ) \frac{\partial g\left ( u(x_k) \right )}{\partial x_k} \Bigg |_{x_k = x_i} + \mathcal{O}\left (\Delta x^p \right ),
        \end{aligned}
    \end{equation}
    which justifies the absence of the factor 2 compared to the conservative case.
\end{proof}

\subsection{SBP semi-discretization of nonconservative hyperbolic system}
The main components of the SBP discretization can be written as follows.
When nonconservative terms are present, the volume term takes the form

\begin{multline}
    \label{volume_terms}
\vec{\mathrm{VOL}}_i :=  \sum_{j = 1}^d \Biggl ( \sum_{k = 1}^N 2 \mtx{D}^j_{i,k} \vec{f}^{\mathrm{vol},j}\left (\vec{u}_i, \vec{u}_k \right ) + \alpha \sum_{k = 1}^N \mtx{D}^j_{i,k} \mtx{H}^{\mathrm{vol},j}\left (\vec{u}_i, \vec{u}_k \right ) \left (\vec{g}^j(\vec{u}_k) - \vec{g}^j(\vec{u}_i) \right ) \\+(1-\alpha)\sum_{k = 1}^N \mtx{D}^j_{i,k} \mtx{H}^j\left (\vec{u}_i\right ) \left (\vec{g}^j(\vec{u}_k) - \vec{g}^j(\vec{u}_i) \right ) \Biggr )
\end{multline}
and the surface term reads
\begin{equation}
    \vec{\mathrm{SURF}}
    :=
    \mtx{M}^{-1}\mtx{R}^T \left (
        \mtx{B}\ \vec{f}^{\mathrm{num}}
        + \frac{\alpha}{2}|\mtx{B}| \mtx{H}^\mathrm{num} \jump{\vec{g}}
        + \frac{1-\alpha}{2}|\mtx{B}| \mtx{H} \jump{\vec{g}} \right )
    - \sum_{j = 1}^d \mtx{M}^{-1} \mtx{R}^T \mtx{B} \mtx{N_j} \mtx{R}\ \vec{f}^j,
    \label{surface_terms}
\end{equation}
where
\begin{equation}
    \begin{aligned}
\vec{f}^{\mathrm{num}}(\vec{u}_-, \vec{u}_+) &= \sum_{j=1}^{d} \vec{n}_j \vec{f}^{\mathrm{num},j}(\vec{u}_-, \vec{u}_+),\\
\mtx{H}^{\mathrm{num}}(\vec{u}_-, \vec{u}_+)\jump{\vec{g}} &= \sum_{j=1}^{d} \vec{n}_j \mtx{H}^{\mathrm{num},j}(\vec{u}_-, \vec{u}_+) \jump{\vec{g}^j},\\
\mtx{H}(\vec{u}_0)\jump{\vec{g}} &= \sum_{j=1}^{d} \vec{n}_j \mtx{H}^{j}(\vec{u}_0) \jump{\vec{g}^j}.
\end{aligned}
\end{equation}
For the $E$-th element, the flux at the right boundary is computed as $\vec{f}^{\mathrm{num}}(\vec{u}_- = \vec{u}_N^E, \vec{u}_+ = \vec{u}_1^{E+1})$, where the subscripts $1$ and $N$ here correspond to the last and first node of the generic element $E$.
\begin{remark}
For a polynomial degree $p$ of order zero, the volume terms vanish, since the derivative is exact for constants, i.e., $\mtx{D} \vec{1} = \vec{0}$, recovering the FV method presented in the previous section.
\end{remark}

\subsection{Entropy conservation/stability}

Following the results in the previous section, we introduce a multidimensional definition of entropy conservation/stability for nonconservative systems.
\begin{definition}\label{Convex_d}
Two-point numerical fluxes $\vec{f}^{\mathrm{num},j}$ and $\mtx{H}^{\mathrm{num},j}$ in the $j$-th direction for the hyperbolic nonconservative system~\eqref{nonconservative_system_d} with $\alpha \in \mathbb{R}$ are entropy-conservative/stable if
\begin{equation}
\label{TadmorDef_convex_d}
	\jump{\vec{\omega}}\cdot \vec{f}^{\mathrm{num},j} -\alpha \mean{\vec{\omega}}\cdot \vec{H}^{\mathrm{num},j}\jump{\vec{g}^j} -(1-\alpha) \mean{\vec{\omega} \cdot \vec{H}^j}\jump{\vec{g}^j} \leq \jump{\psi^j}, \quad j = 1, \dots, d,
\end{equation}
where $U$ is the entropy function, $\vec{\omega} = U'(\vec{u})$ the entropy variables, and $\psi^j$ is the entropy potential in the $j$-th direction.
\end{definition}

\begin{theorem}
    Consider the semi-discretization~\eqref{semi_SBP} with the volume and surface terms given by \eqref{volume_terms} and \eqref{surface_terms}.
    If $\alpha$, $\vec{f}^{\mathrm{vol}}$, $\vec{H}^{\mathrm{vol}}$, that are consistent with $\vec{f}$ and $\vec{H}$, symmetric and entropy-conservative as in Definition~\ref{Convex_d}, and both the mass matrix $\mtx{M}$ and the boundary operators $\mtx{R}^T \mtx{B} \mtx{N}^j \mtx{R}$ are diagonal, the semi-discretization is entropy-conservative/stable, if $\alpha$ and the numerical fluxes $\vec{f}^{\mathrm{num}}$, $\vec{H}^{\mathrm{num}}$ are entropy-conservative/stable according to Definition~\ref{Convex_d}.
\end{theorem}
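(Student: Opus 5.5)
The plan is the standard flux-differencing argument (Fisher--Carpenter, Chen--Shu, Ranocha): contract the semi-discretization with the entropy variables in the $\mtx{M}$-weighted inner product, show that the volume terms collapse to boundary values of the entropy flux, and then handle the interface contributions with the finite volume characterization of Section~\ref{sec:nonconservative_systems}. It suffices to treat a single element and one coordinate direction $j$; the directions decouple in~\eqref{volume_terms} and Definition~\ref{Convex_d} is imposed direction-wise. We compute $\vec{\omega}^T \mtx{M}\,\partial_t \vec{u} = -\vec{\omega}^T\mtx{M}\,\vec{\mathrm{VOL}} - \vec{\omega}^T\mtx{M}\,\vec{\mathrm{SURF}}$ and aim to show that it equals $-\vec{1}^T \mtx{R}^T\mtx{B}\,F^{\mathrm{num}}$ (or is bounded above by it), with $F^{\mathrm{num}}$ given by the two-point formula~\eqref{F_num_form1}/\eqref{F_num_form3}, combined with weight $\alpha$. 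Summing over elements then telescopes, since $F^{\mathrm{num}}$ is single-valued at interfaces.

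\textbf{Volume terms.} Since $\mtx{M}$ is diagonal, $\mtx{M}\mtx{D} = \mtx{Q}$ with $\mtx{Q}+\mtx{Q}^T = \mtx{R}^T\mtx{B}\mtx{N}\mtx{R} =: \mtx{E}$, and $\mtx{E}$ is diagonal. For the volume contribution
\begin{equation*}
\sum_{i,k} \mtx{Q}_{ik}\,\vec{\omega}_i\cdot\Bigl(2\vec{f}^{\mathrm{vol}}_{ik} + \alpha \mtx{H}^{\mathrm{vol}}_{ik}\jump{\vec{g}}_{i,k} + (1-\alpha)\mtx{H}_i\jump{\vec{g}}_{i,k}\Bigr),
\end{equation*}
split $\mtx{Q} = \tfrac12(\mtx{Q}-\mtx{Q}^T) + \tfrac12\mtx{E}$ and symmetrize the skew part in $(i,k)$. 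The conservative part becomes $\tfrac12\sum_{i,k}\mtx{Q}_{ik}\,(\vec{\omega}_i-\vec{\omega}_k)\cdot\vec{f}^{\mathrm{vol}}_{ik}$. In the nonconservative part the summand is antisymmetric in the jump, so pairing $(i,k)$ with $(k,i)$ gives $\mean{\vec{\omega}}_{ik}\cdot\mtx{H}^{\mathrm{vol}}_{ik}\jump{\vec{g}}_{ik}$ and $\mean{\vec{\omega}\cdot\mtx{H}}_{ik}\jump{\vec{g}}_{ik}$, using the symmetry of $\mtx{H}^{\mathrm{vol}}$. These combine exactly into the left-hand side of~\eqref{TadmorDef_convex_d}. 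The EC volume fluxes then turn this into $\sum_{i,k}\mtx{Q}^{\mathrm{skew}}_{ik}\jump{\psi}_{ik}$. Consistency $\mtx{D}\vec{1}=\vec{0}$ reduces this to $\vec{1}^T\mtx{E}\,\vec{\psi}$, up to sign.

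The diagonal part $\tfrac12\mtx{E}$ only sees $i=k$, where the jumps vanish and $\vec{f}^{\mathrm{vol}}_{ii}=\vec{f}_i$, so it gives $\vec{\omega}^T\mtx{E}\vec{f}$. The diagonality assumption is used exactly here. Together, the volume terms yield $\vec{1}^T\mtx{E}(\vec{\omega}\cdot\vec{f}-\psi)=\vec{1}^T\mtx{E}F$, i.e., the element's entropy flux at its boundary nodes.

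\textbf{Surface terms.} The $-\mtx{R}^T\mtx{B}\mtx{N}\mtx{R}\vec{f}$ part cancels the $\vec{\omega}\cdot\vec{f}$ boundary contribution. What remains at each boundary node is $\vec{\omega}_-\cdot\bigl(\vec{f}^{\mathrm{num}} + \tfrac{\alpha}{2}\mtx{H}^{\mathrm{num}}\jump{\vec{g}} + \tfrac{1-\alpha}{2}\mtx{H}_-\jump{\vec{g}}\bigr) - \psi_-$ (for a right boundary), plus the analogous term from the neighbour with $\vec{\omega}_+$ and $\mtx{H}_+$. This is precisely the situation of Lemma~\ref{algebraic_lemma_nc_form1} and Lemma~\ref{algebraic_lemma_nc_form3} with $\vec{u}_0=\vec{u}_\mp$, cf.\ \eqref{eqminus_form3}--\eqref{eqplus_form3}. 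Adding the two one-sided contributions at an interface, the interface term is $-\bigl(\jump{\vec{\omega}}\cdot\vec{f}^{\mathrm{num}} - \alpha\mean{\vec{\omega}}\cdot\mtx{H}^{\mathrm{num}}\jump{\vec{g}} - (1-\alpha)\mean{\vec{\omega}\cdot\mtx{H}}\jump{\vec{g}} - \jump{\psi}\bigr)$. By Definition~\ref{Convex_d} for the surface fluxes, this is $\ge 0$ with the sign convention of Definition~\ref{def_entropy_conservation}, and $=0$ in the EC case. Equivalently, each node can be written via the $F^{\mathrm{num}}$ of \eqref{F_num_form3}/\eqref{F_num_form1}, which gives a local entropy inequality.

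\textbf{Main obstacle.} The hardest step is the symmetrization of the nonconservative volume term, since it is not in flux-differencing form. One has to check that the $(i,k)\leftrightarrow(k,i)$ pairing yields exactly $\mean{\vec{\omega}}\cdot\mtx{H}^{\mathrm{vol}}\jump{\vec{g}}$ and $\mean{\vec{\omega}\cdot\mtx{H}}\jump{\vec{g}}$ with coefficient $\tfrac12\mtx{Q}^{\mathrm{skew}}$. One must also confirm that no boundary remnant arises from the $\tfrac12\mtx{E}$ part, which is ensured by the vanishing diagonal jumps. A second point of care is the boundary scaling of $|\mtx{B}|$ versus $\mtx{B}\mtx{N}$, so that the one-sided halves reproduce the factor $\tfrac12$ of the third form.
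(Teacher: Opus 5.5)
Your proposal is correct and follows essentially the paper's proof. Like the paper, you contract with $\vec{\omega}^T\mtx{M}$, use the SBP property to replace $2\mtx{M}\mtx{D}$ by its skew part plus the diagonal boundary operator, symmetrize via the symmetric volume fluxes and antisymmetric jumps to reach the two-point condition of Definition~\ref{Convex_d}, collapse to boundary values of $\psi^j$ by consistency, and close the interfaces by adding and subtracting $\psi^j$ as in the finite volume lemmas. The only blemish is a factor of $2$: with $\mtx{Q}=\mtx{M}\mtx{D}$ the symmetrized conservative term is $\sum_{i,k}\mtx{Q}_{ik}(\vec{\omega}_i-\vec{\omega}_k)\cdot\vec{f}^{\mathrm{vol}}_{ik}$ (equivalently with $\tfrac12(\mtx{Q}-\mtx{Q}^T)$ in place of $\mtx{Q}$), without your extra $\tfrac12$, and this is harmless because the conservative and nonconservative parts carry the same coefficient.
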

\begin{proof}
To derive the semi-discrete rate of change of the entropy within an element $\frac{d}{dt} \int_{\Omega}U$, we contract to the entropy space by multiplying the left-hand side by $\vec{\omega} \mtx{M} \partial_t \vec{u}$.
The proof is an extension of the proof of~\cite{Ranocha2018,FISHER2013518,chen2017entropy}, including here the nonconservative terms, and it represents a particular case of the specified fluctuation form of the proof presented in~\cite{WARUSZEWSKI2022111507}.
We first consider the volume terms, assuming $\alpha = 1$ for simplicity (since the general case follows similarly),
\begin{equation}
\begin{aligned}
  & \sum_{j = 1}^d \left ( \sum_{i, k = 1}^N 2 \vec{\omega}_i \vec{M}_{i,i} \mtx{D}^j_{i,k} \vec{f} ^{\mathrm{vol},j}\left (\vec{u}_i, \vec{u}_k \right )
  + \sum_{i, k = 1}^N \vec{\omega}_i \vec{M}_{i,i} \mtx{D}^j_{i,k} \vec{H}^{\mathrm{vol},j}\left (\vec{u}_i, \vec{u}_k \right ) \left (\vec{g}^j(\vec{u}_k) - \vec{g}^j(\vec{u}_i) \right ) \right ) \\
  & =  \sum_{j = 1}^d \Bigg ( \sum_{i, k = 1}^N \vec{\omega}_i \left [ \mtx{M} \mtx{D}^j + \mtx{R}^T \mtx{B} \mtx{N}^j \mtx{R} \mtx{D}^j - \mtx{D}^j_{i,k} \mtx{M} \right ]_{i,k} \vec{f} ^{\mathrm{vol},j}\left (\vec{u}_i, \vec{u}_k \right ) \\
  &\quad + \sum_{i,k = 1}^N \frac{\vec{\omega}_i}{2} \left [ \mtx{M} \mtx{D}^j + \mtx{R}^T \mtx{B} \mtx{N}^j \mtx{R} \mtx{D}^j - \mtx{D}^j \mtx{M} \right ]_{i,k} \vec{H}^{\mathrm{vol}}\left (\vec{u}_i, \vec{u}_k \right ) \left (\vec{g}^j(\vec{u}_k) - \vec{g}^j(\vec{u}_i) \right ) \Bigg ).
\end{aligned}
\end{equation}
Since the mass matrices are diagonal in each direction $j \in \{1, .., d \}$,
\begin{small}
\begin{equation}
    \begin{aligned}
& \sum_{i,k=1}^N \vec{\omega}_i \left[  \mtx{M} \mtx{D}^j -  (\mtx{D}^j)^T \mtx{M}\right]_{i,k}  \vec{f}^{\mathrm{vol},j}\left (\vec{u}_i, \vec{u}_k \right ) + \sum_{i,k=1}^N\frac{\vec{\omega}_i}{2}  \left[  \mtx{M} \mtx{D}^j -  (\mtx{D}^j)^T \mtx{M}\right]_{i,k} \vec{H}^{\mathrm{vol},j}(\vec{u}_i,\vec{u}_k)\left (\vec{g}^j(\vec{u}_k) - \vec{g}^j(\vec{u}_i) \right ) \\
 = &\sum_{i,k}^N  \left ( \mtx{M}_{i,i} \mtx{D}^j_{i,k} -  \mtx{D}^j_{k,i} \mtx{M}_{k,k} \right ) \vec{\omega}_i \vec{f}^{\mathrm{vol},j}\left (\vec{u}_i, \vec{u}_k \right ) +  \sum_{i,k=1}^N\frac{\vec{\omega}_i}{2} \left ( \mtx{M}_{i,i} \mtx{D}^j_{i,k} -  \mtx{D}^j_{k,i} \mtx{M}_{k,k} \right ) \vec{H}^{\mathrm{vol},j}(\vec{u}_i,\vec{u}_k)\left (\vec{g}^j(\vec{u}_k) - \vec{g}^j(\vec{u}_i) \right ) \\
 = & \sum_{i,k}^N \mtx{M}_{i,i} \mtx{D}^j_{i,k} \left (\vec{\omega}_i - \vec{\omega}_k \right ) \vec{f}^{\mathrm{vol},j}\left (\vec{u}_i, \vec{u}_k \right ) +  \sum_{i,k=1}^N \mtx{M}_{i,i} \mtx{D}^j_{i,k}  \frac{\vec{\omega}_i + \vec{\omega}_k}{2}  \vec{H}^{\mathrm{vol},j}(\vec{u}_i,\vec{u}_k)\left (\vec{g}^j(\vec{u}_k) - \vec{g}^j(\vec{u}_i) \right ),
    \end{aligned}
\end{equation}
\end{small}%
where we exchanged the indices $i,k$ in the last step using the symmetry of the volume fluxes $\vec{f}^{\mathrm{vol},j}$ and $\vec{H}^{\mathrm{vol},j}$ and the anti-symmetry of the $\vec{H}^{\mathrm{vol},j} \jump{\vec{g}^j}$.
Furthermore, with diagonal matrices $\mtx{R}^T \mtx{B} \mtx{N}^j \mtx{R}$ and the anti-symmetry of $\jump{\vec{g}^j}$,
\begin{equation}
    \sum_{i,k = 1}^N \frac{\vec{\omega}_i}{2} \left [ \mtx{R}^T\mtx{B} \mtx{N}^j \mtx{R} \right ]_{i,k} \vec{H}^{\mathrm{vol},j}\left(\vec{u}_i, \vec{u}_k \right) \left (\vec{g}^j(\vec{u}_k) - \vec{g}^j(\vec{u}_i) \right )  = 0
\end{equation}
and
\begin{equation}
\sum_{i,k = 1}^N \vec{\omega}_i \left [ \mtx{R}^T\mtx{B} \mtx{N}^j \mtx{R} \right ]_{i,k} \vec{f}^{\mathrm{vol},j}\left(\vec{u}_i, \vec{u}_k \right) = \sum_{k = 1}^N \left [ \mtx{R}^T\mtx{B} \mtx{N}^j \mtx{R} \right ]_{k,k} \vec{\omega}_k \vec{f}^{\mathrm{vol},j}\left(\vec{u}_k, \vec{u}_k \right).
\end{equation}
Thus, the volume contribution to the entropy rate within an element is
\begin{multline}
\vec{\omega}^T \mtx{M} \mtx{\mathrm{VOL}}^j
= \sum_{i,k}^N \mtx{M}_{i,i} \mtx{D}^j_{i,k} \left (\vec{\omega}_i - \vec{\omega}_k \right ) \vec{f}^{\mathrm{vol},j}\left (\vec{u}_i, \vec{u}_k \right )
\\
+  \sum_{i,k=1}^N \mtx{M}_{i,i} \mtx{D}^j_{i,k}  \frac{\vec{\omega}_i + \vec{\omega}_k}{2}  \vec{H}^{\mathrm{vol},j}(\vec{u}_i,\vec{u}_k)\left (g(\vec{u}_k) - g(\vec{u}_i) \right )
+ \sum_{k = 1}^N\left [ \mtx{R}^T\mtx{B} \mtx{N}^j \mtx{R} \right ]_{k,k} \vec{\omega}_k \vec{f}^{\mathrm{vol},j}\left(\vec{u}_k, \vec{u}_k \right).
    \label{step_cartesian}
\end{multline}
Using the EC Definition~\ref{Convex_d} for $\alpha = 1$, and $j \in \{1,\ldots,d\}$, the volume terms become
\begin{equation}
\begin{aligned}
\vec{\omega} \mtx{M}\  \mtx{\mathrm{VOL}}^j &= \sum_{i,k = 1}^N \mtx{M}_{i,i} \mtx{D}^j_{i,k} \left (\psi^j_i - \psi^j_k \right ) + \sum_{k = 1}^N\left [ \mtx{R}^T\mtx{B} \mtx{N}^j \mtx{R} \right ]_{k,k} \vec{\omega}_k \vec{f}^{\mathrm{vol},j}\left(\vec{u}_k, \vec{u}_k \right) \\
 &= - \sum_{i,k = 1}^N \left [ \mtx{M}_{i,i} \mtx{D}^j\right ]_{i,k} \psi^j_k + \sum_{k = 1}^N\left [ \mtx{R}^T\mtx{B} \mtx{N}^j \mtx{R} \right ]_{k,k} \vec{\omega}_k \vec{f}^{\mathrm{vol},j}\left(\vec{u}_k, \vec{u}_k \right) \\
&= - \sum_{i,k = 1}^N \left [ \mtx{R}^T \mtx{B} \mtx{N}^j \mtx{R} - (\mtx{D}^j)^T \mtx{M} \right ]_{i,k} \psi^j_k + \sum_{k = 1}^N\left [ \mtx{R}^T\mtx{B} \mtx{N_j} \mtx{R} \right ]_{k,k} \vec{\omega}_k \vec{f}^{\mathrm{vol},j}\left(\vec{u}_k, \vec{u}_k \right) \\
& = - \sum_{i,k = 1}^N \left [ \mtx{R}^T\mtx{B} \mtx{N}^j \mtx{R} \right ]_{i,k} \psi^j_k + \sum_{k = 1}^N\left [ \mtx{R}^T\mtx{B} \mtx{N}^j \mtx{R} \right ]_{k,k} \vec{\omega}_k \vec{f}^{\mathrm{vol},j}\left(\vec{u}_k, \vec{u}_k \right).
\end{aligned}
\end{equation}
Hence, the semi-discrete rate of change of the entropy in one element becomes
\begin{equation}
\vec{\omega} \cdot \mtx{M} \partial_t \vec{u} = \sum_{j=1}^{d}  \sum_{i,k = 1}^N \left [ \mtx{R}^T\mtx{B} \mtx{N}^j \mtx{R} \right ]_{i,k} \psi^j_k -\vec{\omega} \cdot \mtx{R}^T \left (\mtx{B} \vec{f}^{\mathrm{num}} +  \frac{\alpha}{2}|\mtx{B}| \mtx{H}^{num} \jump{\vec{g}} + \frac{1-\alpha}{2}|\mtx{B}| \mtx{H} \jump{\vec{g}} \right ).
\end{equation}
These surface terms can be treated in each direction $j$ independently, by adding and subtracting $\psi^j_N$ from the left element and $\psi^j_1$ from the right element, similarly to Lemma~\ref{algebraic_lemma_nc_form1}. Because of the entropy-conservative/stable Definition~\ref{Convex_d}, we finally obtain, without the assumption of $\alpha =1$
\begin{equation}
\vec{\omega} \cdot \mtx{M} \partial_t \vec{u} \leq -\mtx{1}^T \mtx{R}^T \mtx{B} \vec{F}^{\mathrm{num}}
\end{equation}
where
\begin{equation}
    \begin{aligned}
\vec{F}^{\mathrm{num}}(\vec{u}_+, \vec{u}_-) &= \sum_{j=1}^d \vec{n}_j \vec{F}^{\mathrm{num},j}\\
\vec{F}^{\mathrm{num},j}(\vec{u}_+, \vec{u}_-) &= \mean{\vec{\omega}} \cdot \vec{f}^{\mathrm{num},j}  -\mean{\vec{\omega} \cdot \vec{f}^j} +  \mean{F^j} - \frac{\alpha}{4}\jump{\vec{\omega}} \cdot \vec{H}^{\mathrm{num},j}\jump{\vec{g}^j} - \frac{1-\alpha}{4}\jump{\vec{\omega} \cdot \vec{H}^j} \jump{\vec{g}^j}.
    \end{aligned}
\end{equation}
\end{proof}

\begin{remark}
    From the implementation point of view, the presence of antisymmetric terms in the volume nonconservative terms (due to the presence of $\jump{\vec{g}}$) can be exploited to reduce the computational cost by including the action of $\sum_{j = 1}^d \mtx{M}^{-1} \mtx{R}^T \mtx{B} \mtx{N}^j \mtx{R}\ \vec{f}^j$ into the derivative operator, as already shown for the conservative terms in~\cite{ranocha2023efficient}, by defining
    \begin{equation}
    \tilde{\mtx{D}}^j = 2 \mtx{D}^j - \mtx{M}^{-1} \mtx{R}^T \mtx{B} \mtx{N}^j \mtx{R}.
    \end{equation}
    Since the nonconservative terms are written in anti-symmetric form, their contribution to the diagonal of the derivative operator vanishes, i.e., they are consistent with zero. Therefore, it is sufficient to compute only either the upper or lower triangle part of the derivative operator.
\end{remark}

\section{Summation-by-parts operators and curvilinear coordinates}
\label{sec:sbp_curvilinear}
To extend the method to curved meshes within the SBP framework, we follow the classical approach described by Kopriva~\cite{Kopriva2006,Kopriva2009}.
For the sake of clarity, we restrict the presentation to 2D.
The extension to 3D is conceptually straightforward, requiring only additional care in the discretization of the metric terms in order to ensure the free-stream preservation property~\cite{vinokur2002extension, Kopriva2006, Kopriva2009}.

In two space dimensions a generic hyperbolic nonconservative system reads
\begin{equation}
\partial_t \vec{u} + \partial_x \vec{f}^1(\vec{u}) + \partial_y \vec{f}^2(\vec{u}) + \vec{H}^1(\vec{u}) \partial_x \vec{g}^1(\vec{u}) + \vec{H}^2(\vec{u}) \partial_y \vec{g}^2(\vec{u}) = 0.
\end{equation}
The computational domain $\Omega$ is divided into $K$ non-overlapping quadrilateral elements $E_k$, $k = 1, \dots, K$.
Each physical element is obtained from the reference element $E_0 = [-1,1]^2$ through a mapping
\begin{equation}
x = X(\xi, \eta), \quad y = Y(\xi, \eta), \quad (\xi, \eta) \in E_0.
\end{equation}
If the edges of an element are straight, the mapping corresponds to a bilinear transformation and is linear with respect to each reference coordinate.
When the elements have curved boundaries, the geometry is described using high-order polynomials~\cite{Kopriva2006, Kopriva2009}.
As a result, the mapping itself becomes a polynomial in each direction.
In such cases, we model the geometry through a polynomial mapping of degree $p_{\mathrm{geo}}$ in each coordinate direction.

Under this transformation, the nonconservative hyperbolic system reads
\begin{equation}
J \partial_t \vec{u} + \partial_\xi \tilde{\vec{f}}^1 + \partial_\eta \tilde{\vec{f}}^2 + \tilde{\vec{H}} \partial_\xi \tilde{\vec{g}}^1 + \tilde{\vec{H}} \partial_\eta \tilde{\vec{g}}^2 = \vec{0},
\end{equation}
where $J = X_\xi Y_\eta - X_\eta Y_\xi$ is the Jacobian of the transformation, and the $\tilde{\cdot}$ quantities are the contravariant counterparts, i.e.,
\begin{equation}
    \tilde{\vec{f}}^j = \hat{\vec{n}}^j \cdot \vec{f}, \quad
    \tilde{\vec{g}}^j = \hat{\vec{n}}^j \circ \vec{g}, \quad
    \tilde{\vec{H}} = \left (\vec{H}^1, \vec{H}^2 \right ),
\end{equation}
where $\circ$ denotes the Hadamard (element-wise) product, $\vec{f} = (\vec{f}^1, \vec{f}^2)^T$, $\vec{g} = (\vec{g}^1, \vec{g}^2)^T$, and $\hat{\vec{n}}^j$ are the contravariant vectors (normal directions to the coordinate lines) defined as
\begin{equation}
    \hat{\vec{n}}^1 = \begin{pmatrix} Y_\eta \\ -X_\eta \end{pmatrix}, \quad
    \hat{\vec{n}}^2 = \begin{pmatrix} -Y_\xi \\ X_\xi \end{pmatrix},
\end{equation}
where $X_\xi, X_\eta, Y_\xi, Y_\eta$ are the metric terms.
They satisfy the metric identities~\cite{vinokur2002extension, Kopriva2006, Kopriva2009}
\begin{equation}
\partial_\xi Y_\eta - \partial_\eta Y_\xi = 0, \quad -\partial_\xi X_\eta + \partial_\eta X_\xi = 0,
\label{metric_identities}
\end{equation}
ensuring that conservation laws are preserved under the coordinate transformation.
The SBP operators are all defined on the reference element $E_0$ and the metric terms are approximated as
\begin{equation}
X_\xi \approx \mtx{D}^1 \vec{X} = \vec{X}_\xi, \quad X_\eta \approx \vec{X} \left ( \mtx{D}^2  \right )^T = \vec{X}_\eta, \quad Y_\xi \approx \mtx{D}^1 \vec{Y} = \vec{Y}_\xi, \quad Y_\eta \approx \vec{Y} \left ( \mtx{D}^2 \right )^T = \vec{Y}_\eta,
\end{equation}
which guarantees that the free-stream preservation (FSP)~\cite{VisbalGaitonde1999, Kopriva2006} property holds also at the discrete level, i.e.,
\begin{equation}
    \begin{aligned}
    \partial_\xi Y_\eta - \partial_\eta Y_\xi &\approx \mtx{D}^1 \vec{Y}_\eta - \vec{Y}_\xi \left (\mtx{D}^2 \right )^T  = \vec{0}, \\
    \partial_\xi X_\eta - \partial_\eta X_\xi &\approx \mtx{D}^1 \vec{X}_\eta - \vec{X}_\xi \left (\mtx{D}^2 \right )^T  = \vec{0}.
    \end{aligned}
    \label{discrete_metric_identities}
\end{equation}
Essentially, due to the metric identities being satisfied on the reference element, the SBP operators are applied to the reference element, which is then mapped to the physical element.
Thus, the volume term reads
\begin{equation}
    \begin{aligned}
\vec{\mathrm{VOL}}_i :=&  \sum_{j = 1}^d \left ( \sum_{k = 1}^N 2 \mtx{D}^j_{i,k} \tilde{\vec{f}} ^{\mathrm{vol},j}\left (\vec{u}_i, \vec{u}_k \right ) + \alpha \sum_{k = 1}^N \mtx{D}^j_{i,k} \tilde{\vec{H}}^{\mathrm{vol}}\left (\vec{u}_i, \vec{u}_k \right ) \jump{\tilde{\vec{g}}^j}_{i,k} +(1-\alpha)\sum_{k = 1}^N \mtx{D}^j_{i,k} \tilde{\vec{H}}\left (\vec{u}_i\right ) \jump{\tilde{\vec{g}}^j}_{i,k} \right ),
\label{volume_terms_curvilinear}
    \end{aligned}
\end{equation}
and the surface term reads
\begin{equation}
    \vec{\mathrm{SURF}} := \mtx{M}^{-1}\mtx{R}^T \left (\mtx{B}  \vec{\tilde{f}}^{\mathrm{num}} + \alpha \frac{1}{2}|\mtx{B}| \tilde{\mtx{H}}^{\mathrm{num}} \jump{\tilde{\vec{g}}} + (1-\alpha)\frac{1}{2}|\mtx{B}| \tilde{\mtx{H}} \jump{\tilde{\vec{g}}} \right ) - \sum_{j = 1}^d \mtx{M}^{-1} \mtx{R}^T \mtx{B} \mtx{N_j} \mtx{R} \vec{\tilde{f}}^j,
    \label{surface_terms_curvilinear}
\end{equation}
where
\begin{equation}
    \begin{aligned}
\tilde{\vec{f}}^{\mathrm{num}}(\vec{u}_-, \vec{u}_+) &= \sum_{j=1}^{d} \tilde{\vec{f}}^{\mathrm{num},j}(\vec{u}_-, \vec{u}_+), \quad \tilde{\vec{f}}^{\mathrm{num},j} = \hat{\vec{n}}^j \cdot \left (\vec{f}^{\mathrm{num},1}, \vec{f}^{\mathrm{num},2} \right ),\\
\mtx{H}^{\mathrm{num}}(\vec{u}_-, \vec{u}_+)\cdot \jump{\vec{g}} &= \sum_{j=1}^{d} \tilde{\vec{H}}^{\mathrm{num}}(\vec{u}_-, \vec{u}_+) \cdot \jump{\tilde{\vec{g}}^j},\\
\mtx{H}(\vec{u}_0)\cdot \jump{\vec{g}} &= \sum_{j=1}^{d} \tilde{\vec{H}}(\vec{u}_0) \cdot \jump{\tilde{\vec{g}}^j},\\
\tilde{\vec{f}}^{\mathrm{vol},j}(\vec{u}_i, \vec{u}_k) &= \mean{\hat{\vec{n}}^j}_{i,k} \cdot \left (\vec{f}^{\mathrm{vol},1}(\vec{u}_i, \vec{u}_k), \vec{f}^{\mathrm{vol},2}(\vec{u}_i, \vec{u}_k) \right ),\\
\tilde{\vec{g}}^{j} &= \mean{\hat{\vec{n}}^j} \circ \left (\vec{g}^{1}, \vec{g}^{2} \right ).
\end{aligned}
\end{equation}
An extension of entropy-preserving methods for nonconservative hyperbolic systems to curved meshes has already been presented by Waruszewski et al.~\cite{WARUSZEWSKI2022111507} for DG methods, which are a special case of SBP methods.
In fact, any SBP method can be extended to curvilinear meshes by formulating the operators on the reference element $[-1,1]^d$ and incorporating the geometric mapping through the corresponding metric identities.
Therefore, the entropy analysis carries over directly once the SBP property is available in the reference configuration.
For this reason, we do not repeat the proof of the following theorem.
It follows the same arguments as in the Cartesian case, while the treatment of the metric terms for a general fluctuation formulation has already been presented in~\cite{WARUSZEWSKI2022111507}.
The only modification required is to replace the DGSEM differentiation and quadrature operators with general SBP operators, since the derivation relies solely on the SBP property and not on the specific structure of DGSEM.
\begin{theorem}
    Consider the semi-discretization~\eqref{semi_SBP} with the volume and surface terms given by~\eqref{volume_terms_curvilinear} and~\eqref{surface_terms_curvilinear}.
    If $\alpha \in \mathbb{R}$ and the numerical fluxes $\vec{f}^{\mathrm{vol}}$, $\vec{H}^{\mathrm{vol}}$ are consistent with $\vec{f}$ and $\vec{H}$, symmetric, entropy-conservative as in Definition~\ref{Convex_d}, both the mass matrix $\mtx{M}$ and the boundary operators $\mtx{R}^T \mtx{B} \mtx{N_j} \mtx{R}$ are diagonal, and the metric terms $\vec{X}_\xi, \vec{X}_\eta, \vec{Y}_\xi, \vec{Y}_\eta$ satisfy the metric identities~\eqref{metric_identities} at the discrete level, the semi-discretization is entropy-conservative/stable, if $\alpha$ and the numerical fluxes $\vec{f}^{\mathrm{num}}$, $\vec{H}^{\mathrm{num}}$ are entropy-conservative/stable as in Definition~\ref{Convex_d}.
\end{theorem}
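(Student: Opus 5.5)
The plan is to reduce the curvilinear case to the Cartesian argument, done direction by direction on the reference element $E_0$. First I would contract the semi-discretization with $\vec{\omega}^T \mtx{M}$, so that $\vec{\omega}\cdot \mtx{M}\, J\partial_t \vec{u}$ is the discrete entropy rate in an element. Since all SBP operators live on $E_0$, the SBP property \eqref{sbp_property} holds there. I would then split the volume term \eqref{volume_terms_curvilinear} as $\mtx{M}\mtx{D}^j = \tfrac12(\mtx{M}\mtx{D}^j - (\mtx{D}^j)^T\mtx{M}) + \tfrac12 \mtx{R}^T\mtx{B}\mtx{N}^j\mtx{R}$, exactly as in the proof of the Cartesian theorem. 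Next, I would exchange $i$ and $k$, using three facts:
\begin{itemize}
\item the symmetry of $\tilde{\vec{f}}^{\mathrm{vol},j}$, which holds because $\mean{\hat{\vec{n}}^j}_{i,k}$ is symmetric;
\item the symmetry of $\tilde{\vec{H}}^{\mathrm{vol}}$;
\item the antisymmetry of $\jump{\tilde{\vec{g}}^j}_{i,k}$.
\end{itemize}
This should produce the two-point expressions $\mtx{M}_{ii}\mtx{D}^j_{ik}\big[(\vec{\omega}_i-\vec{\omega}_k)\cdot\tilde{\vec{f}}^{\mathrm{vol},j} + \mean{\vec{\omega}}_{i,k}\cdot\tilde{\vec{H}}^{\mathrm{vol}}\jump{\tilde{\vec{g}}^j}_{i,k}\big]$. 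For general $\alpha$, the first-form part $(1-\alpha)\mtx{H}(\vec{u}_i)$ is handled in the same way and yields $\mean{\vec{\omega}\cdot\tilde{\vec{H}}}_{i,k}$.

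The key step is to apply Definition~\ref{Convex_d} with equality in each physical direction $l$. The metric factor $\mean{\hat{\vec{n}}^j}_{i,k}$ multiplies every term linearly, since both $\tilde{\vec{f}}^{\mathrm{vol},j}$ and $\tilde{\vec{g}}^j$ carry the same averaged metric. The bracket therefore becomes $\sum_l \mean{\hat n^j_l}_{i,k}(\psi^l_k-\psi^l_i)$ up to sign. This is where the curvilinear case differs from the Cartesian one, and I expect it to be the main obstacle. The term $\mean{\hat n^j_l}_{i,k}\jump{\psi^l}_{i,k}$ must be written as
\[
\tfrac12\big(\hat n^j_{l,k}\psi^l_k - \hat n^j_{l,i}\psi^l_i\big) + \tfrac12\big(\hat n^j_{l,i}\psi^l_k - \hat n^j_{l,k}\psi^l_i\big).
\]
Summed against $\mtx{M}\mtx{D}^j$, the first piece telescopes through the SBP property into boundary terms. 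The second piece leaves residuals of the form $\psi^l_i\,(\mtx{D}^j \hat{\vec{n}}^j_l)_i$ summed over $j$. These vanish precisely by the discrete metric identities \eqref{discrete_metric_identities}, and this is the point where that hypothesis is needed. I would follow the bookkeeping of Waruszewski et al.\ here, replacing the DGSEM operators by generic diagonal-norm SBP operators. The nonconservative jumps introduce no extra metric residual, because they are already contracted in the form required by Definition~\ref{Convex_d}.

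What remains are boundary terms $\sum_{j,l}[\mtx{R}^T\mtx{B}\mtx{N}^j\mtx{R}]_{kk}\big(\hat n^j_{l,k}\,\vec{\omega}_k\cdot\vec{f}^l_k - \hat n^j_{l,k}\psi^l_k\big)$, which uses that $\mtx{R}^T\mtx{B}\mtx{N}^j\mtx{R}$ is diagonal. The non-flux-differencing surface correction in \eqref{surface_terms_curvilinear} cancels the $\vec{\omega}\cdot\tilde{\vec{f}}$ part. At each interface, the surface nodes of the two neighbouring elements share the same contravariant normal, by conformity of the mesh. So, exactly as in Lemma~\ref{algebraic_lemma_nc_form1} and Lemma~\ref{algebraic_lemma_nc_form3} applied to the normal fluxes, I would add and subtract $\psi$ from each side. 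The entropy-conservative/stable property of $\tilde{\vec{f}}^{\mathrm{num}}$ and $\tilde{\mtx{H}}^{\mathrm{num}}$ then gives
\[
\vec{\omega}\cdot\mtx{M}J\partial_t\vec{u} \le -\mtx{1}^T\mtx{R}^T\mtx{B}\tilde F^{\mathrm{num}},
\]
where $\tilde F^{\mathrm{num}}$ is the normal-weighted version of the entropy flux found in the Cartesian theorem. Summing over elements, these fluxes telescope, which completes the argument.
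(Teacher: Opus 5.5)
Your proposal is correct and follows the route the paper indicates. The paper omits the proof and says only that it is the Cartesian argument on the reference element combined with the metric-term treatment of Waruszewski et al., with DGSEM operators replaced by diagonal-norm SBP operators; your splitting of $\mean{\hat n^j_l}_{i,k}\jump{\psi^l}_{i,k}$, whose residual $\psi^l_i(\mtx{D}^j\hat{\vec{n}}^j_l)_i$ cancels under the discrete metric identities, is exactly that omitted step made explicit. One caveat, shared with the paper's statement: in the entropy-stable case the interface inequality has to be assumed for the normal (contravariant) flux combination, since per-direction inequalities as in Definition~\ref{Convex_d} are not preserved when multiplied by normal components of mixed sign.
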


\subsection{Well-balanced methods using curvilinear coordinates}
\label{sec:wb_curvilinear}
In this section, we present a characterization for steady states conditions of nonconservative hyperbolic systems. The motivation behind this characterization is that it allows extending well-balanced schemes from two-point finite volume fluxes to high-order methods on curved meshes.

\begin{definition}[Steady states]\label{steady_state_definition}
    A solution $\vec{u}(x,t) \in Y \subset \mathbb{R}^n$ of~\eqref{nonconservative_system} is a steady state if
    \begin{equation}
   \partial_t \vec{u} = -\partial_x \vec{f}(\vec{u}) - \vec{H}(\vec{u}) \partial_x \vec{g}(\vec{u}) = 0, \quad \forall (x,t) \in \mathbb{R}^d \times \mathbb{R}^+.
    \end{equation}
\end{definition}
Therefore, given steady-state values $\vec{u}_i$, we seek for a semi-discretization that satisfies
\begin{equation}
\frac{\vec{f}^{\mathrm{num}}_{i+1/2} - \vec{f}^{\mathrm{num}}_{i-1/2}}{\Delta x} + \alpha \frac{\vec{H}^{\mathrm{num}}_{i+1/2} \jump{\vec{g}}_{i+1/2} + \vec{H}^{\mathrm{num}}_{i-1/2}\jump{\vec{g}}_{i-1/2}} {2 \Delta x} + (1-\alpha) \vec{H}_i \frac{\jump{\vec{g}}_{i+1/2} + \jump{\vec{g}}_{i-1/2}} {2 \Delta x}= \vec{0},
\label{semidiscrete_wb}
\end{equation}
where we have explicitly introduced again a combination of the first and third form for the discretization of the nonconservative term.
We use the following well-known definition.
\begin{definition}
    \label{finite_volume_wb}
Given $\vec{u}_i$ a steady state as in Definition~\ref{steady_state_definition}, a finite volume semi-discretization of~\eqref{nonconservative_system} is said to be a well-balanced scheme if $\partial_t \vec{u}_i = \vec{0}$.
\end{definition}
For sake of brevity, we will proceed with deriving an algebraic condition, similarly to the previous section, only for the semi-discretization~\eqref{semidiscrete_wb}, while the conditions and results for the other semi-discretization will be only given in a tabular form.

\begin{lemma}{\label{algebraic_wb}}
Let $Y \subset \mathbb{R}^n$ be open, and $\vec{f}\colon Y \to \mathbb{R}^n$,
$\vec{H}\colon Y \to \mathbb{R}^{n \times m}$, $\vec{g}\colon Y \to \mathbb{R}^m$,
$\vec{f}^{\mathrm{num}}\colon Y \times Y \to \mathbb{R}^{n}$ and
$\vec{H}^{\mathrm{num}}\colon Y \times Y \to \mathbb{R}^{n \times m}$ satisfying
\begin{equation}
\forall \vec{u} \in Y\colon \quad
\vec{f}^\mathrm{num}(\vec{u},\vec{u}) = \vec{f}(\vec{u}),
\quad
\vec{H}^{\mathrm{num}}(\vec{u},\vec{u}) = \vec{H}(\vec{u})
\end{equation}
be given. Then,
\begin{multline}
    \forall \vec{u}_-, \vec{u}_0, \vec{u}_+ \in Y\colon
    \\
    \vec{f}^{\mathrm{num}} (\vec{u}_0, \vec{u}_+)
    - \vec{f}^{\mathrm{num}}(\vec{u}_-, \vec{u}_0)
    + \alpha \frac{
        \vec{H}^{\mathrm{num}}(\vec{u}_0, \vec{u}_+) \bigl(\vec{g}(\vec{u}_+) - \vec{g}(\vec{u}_0)\bigr)
        + \vec{H}^{\mathrm{num}}(\vec{u}_-, \vec{u}_0) \bigl(\vec{g}(\vec{u}_0) - \vec{g}(\vec{u}_-)\bigr)
    }{2}\\
   + (1-\alpha) \frac{\vec{H}(\vec{u}_0)\bigl(\vec{g}(\vec{u}_+) - \vec{g}(\vec{u}_0)\bigr)
        + \vec{H}(\vec{u}_0) \bigl(\vec{g}(\vec{u}_0) - \vec{g}(\vec{u}_-)\bigr)
    }{2} = \vec{0}
    \label{algebraic_condition_wb}
\end{multline}
if and only if
\begin{multline}
    \forall \vec{u}_-, \vec{u}_+ \in Y\colon \\
    \vec{f}^{\mathrm{num}} (\vec{u}_-, \vec{u}_+)
    + \frac{\alpha}{2} \vec{H}^{\mathrm{num}}(\vec{u}_-, \vec{u}_+)
    \bigl(\vec{g}(\vec{u}_+) - \vec{g}(\vec{u}_-)\bigr) + \frac{(1-\alpha)}{2}\vec{H}(\vec{u}_-) \bigl( \vec{g}(\vec{u}_+) - \vec{g}(\vec{u}_-) \bigr )
    = \vec{f}(\vec{u}_-),
\end{multline}
\begin{multline}
    \forall \vec{u}_-, \vec{u}_+ \in Y\colon \\
    {-}\vec{f}^{\mathrm{num}} (\vec{u}_-, \vec{u}_+)
    + \frac{\alpha}{2}\vec{H}^{\mathrm{num}}(\vec{u}_-, \vec{u}_+)
    \bigl(\vec{g}(\vec{u}_+) - \vec{g}(\vec{u}_-)\bigr) + \frac{(1-\alpha)}{2}\vec{H}(\vec{u}_+) \bigl( \vec{g}(\vec{u}_+) - \vec{g}(\vec{u}_-) \bigr )
    = -\vec{f}(\vec{u}_+).
\end{multline}
\end{lemma}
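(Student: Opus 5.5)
The proof follows the same pattern as Lemma~\ref{algebraic_lemma} and Lemma~\ref{algebraic_lemma_nc_form3}: specialize the three-point identity \eqref{algebraic_condition_wb} to obtain the two two-point identities, then recombine those two-point identities to recover the three-point one. Since the statement is a vector-valued equality rather than an inequality contracted with $\vec{\omega}$, no entropy variables appear. The only ingredients are the consistency assumptions $\vec{f}^{\mathrm{num}}(\vec{u},\vec{u}) = \vec{f}(\vec{u})$ and $\vec{H}^{\mathrm{num}}(\vec{u},\vec{u}) = \vec{H}(\vec{u})$. Also, any term multiplied by a jump $\vec{g}(\vec{u}) - \vec{g}(\vec{u})$ of coincident states vanishes identically.

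\emph{Necessity.} Assume \eqref{algebraic_condition_wb} holds for all triples.
\begin{itemize}
\item Choose $\vec{u}_0 = \vec{u}_-$. Then $\vec{f}^{\mathrm{num}}(\vec{u}_-,\vec{u}_-) = \vec{f}_-$. Both terms containing $\vec{g}(\vec{u}_0) - \vec{g}(\vec{u}_-)$ vanish, and the remaining $\vec{H}$-evaluations are at $(\vec{u}_-,\vec{u}_+)$ and at $\vec{u}_-$. This gives $\vec{f}^{\mathrm{num}}(\vec{u}_-,\vec{u}_+) - \vec{f}_- + \frac{\alpha}{2}\vec{H}^{\mathrm{num}}(\vec{u}_-,\vec{u}_+)\jump{\vec{g}} + \frac{1-\alpha}{2}\vec{H}_-\jump{\vec{g}} = \vec{0}$, which is the first two-point identity.
\item Choose $\vec{u}_0 = \vec{u}_+$. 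Now the terms containing $\vec{g}(\vec{u}_+) - \vec{g}(\vec{u}_0)$ vanish, and $\vec{f}^{\mathrm{num}}(\vec{u}_+,\vec{u}_+) = \vec{f}_+$. This gives $\vec{f}_+ - \vec{f}^{\mathrm{num}}(\vec{u}_-,\vec{u}_+) + \frac{\alpha}{2}\vec{H}^{\mathrm{num}}(\vec{u}_-,\vec{u}_+)\jump{\vec{g}} + \frac{1-\alpha}{2}\vec{H}_+\jump{\vec{g}} = \vec{0}$, which is the second identity after moving $\vec{f}_+$ to the right-hand side.
\end{itemize}

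\emph{Sufficiency.} Given arbitrary $\vec{u}_-, \vec{u}_0, \vec{u}_+$, proceed in three steps.
\begin{enumerate}
\item Apply the second two-point identity to the pair $(\vec{u}_-,\vec{u}_0)$, with $\vec{u}_0$ in the role of the right state. Its pointwise $\vec{H}$ is then evaluated at $\vec{u}_0$:
\begin{equation*}
-\vec{f}^{\mathrm{num}}(\vec{u}_-,\vec{u}_0) + \tfrac{\alpha}{2}\vec{H}^{\mathrm{num}}(\vec{u}_-,\vec{u}_0)(\vec{g}_0-\vec{g}_-) + \tfrac{1-\alpha}{2}\vec{H}_0(\vec{g}_0-\vec{g}_-) = -\vec{f}_0.
\end{equation*}
\item Apply the first identity to the pair $(\vec{u}_0,\vec{u}_+)$, with $\vec{u}_0$ in the role of the left state. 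Its pointwise $\vec{H}$ is again evaluated at $\vec{u}_0$:
\begin{equation*}
\vec{f}^{\mathrm{num}}(\vec{u}_0,\vec{u}_+) + \tfrac{\alpha}{2}\vec{H}^{\mathrm{num}}(\vec{u}_0,\vec{u}_+)(\vec{g}_+-\vec{g}_0) + \tfrac{1-\alpha}{2}\vec{H}_0(\vec{g}_+-\vec{g}_0) = \vec{f}_0.
\end{equation*}
\item Add the two identities. The $\vec{f}_0$ terms cancel, and the left-hand side is exactly the left-hand side of \eqref{algebraic_condition_wb}. Hence \eqref{algebraic_condition_wb} holds.
\end{enumerate}

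There is no real obstacle; the argument is bookkeeping. The one point requiring care is the $(1-\alpha)$ pointwise term. It is not symmetric in its arguments, which is why two separate identities, with $\vec{H}_-$ and $\vec{H}_+$ respectively, are needed rather than a single symmetric one. The recombination works precisely because, in both pairs used in the sufficiency step, the shared state $\vec{u}_0$ is the one at which $\vec{H}$ is evaluated.
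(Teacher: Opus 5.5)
Your proof is correct and is the argument the paper intends when it refers back to the earlier algebraic lemmas. Setting $\vec{u}_0 = \vec{u}_-$ and $\vec{u}_0 = \vec{u}_+$ gives the two two-point identities, and adding the second identity on $(\vec{u}_-,\vec{u}_0)$ to the first on $(\vec{u}_0,\vec{u}_+)$ recovers the three-point condition.

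Your closing remark has the wrong explanation, though this does not affect the proof. Two separate identities are not forced by the pointwise $(1-\alpha)$ term. For symmetric $\vec{f}^{\mathrm{num}}$ and $\vec{H}^{\mathrm{num}}$, the second identity is equivalent to the first applied to the swapped pair $(\vec{u}_+,\vec{u}_-)$, with that term included. They are needed because the lemma assumes no symmetry, and, unlike in the entropy lemmas, there is no auxiliary flux $F^{\mathrm{num}}$ for the sum of the two specializations to eliminate.
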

\begin{proof}
The proof follows immediately by applying the same arguments as in the algebraic characterizations showed in Sections~\ref{sec:conservation_laws} and~\ref{sec:nonconservative_systems}.
\end{proof}
We immediately obtain the following result.
\begin{theorem}
    \label{well_balanced_fluxes}
A finite volume scheme~\eqref{FV_convex} is well-balanced (see Definition~\ref{finite_volume_wb}) if and only if
\begin{equation}
    \begin{aligned}
    &\vec{f}^{\mathrm{num}} (\vec{u}_i, \vec{u}_{i+1})
    + \frac{\alpha}{2}\vec{H}^{\mathrm{num}}(\vec{u}_i, \vec{u}_{i+1})
    \bigl(\vec{g}(\vec{u}_{i+1}) - \vec{g}(\vec{u}_{i})\bigr) + \frac{(1-\alpha)}{2}\vec{H}(\vec{u}_i) \bigl( \vec{g}(\vec{u}_{i+1}) - \vec{g}(\vec{u}_{i}) \bigr ),
    = \vec{f}(\vec{u}_i) \\
    -&\vec{f}^{\mathrm{num}} (\vec{u}_{i-1}, \vec{u}_i)
    + \frac{\alpha}{2}\vec{H}^{\mathrm{num}}(\vec{u}_{i-1}, \vec{u}_{i})
    \bigl(\vec{g}(\vec{u}_{i}) - \vec{g}(\vec{u}_{i-1})\bigr) + \frac{(1-\alpha)}{2}\vec{H}(\vec{u}_i) \bigl( \vec{g}(\vec{u}_{i}) - \vec{g}(\vec{u}_{i-1}) \bigr )
    = -\vec{f}(\vec{u}_{i}).
    \end{aligned}
\end{equation}
\end{theorem}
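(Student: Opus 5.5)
The plan is to reduce Theorem~\ref{well_balanced_fluxes} to the algebraic characterization in Lemma~\ref{algebraic_wb}, in the same way Theorem~\ref{tadmor_theorem} follows from Lemma~\ref{algebraic_lemma}.

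By Definition~\ref{finite_volume_wb}, well-balancedness means that the semi-discrete right-hand side~\eqref{semidiscrete_wb} vanishes at every cell $i$ for the given steady-state values. Multiply by $\Delta x$ and introduce, for each interface $i+1/2$, the left and right interface contributions
\begin{equation*}
\vec{L}_{i+1/2} = \vec{f}^{\mathrm{num}}_{i+1/2} + \tfrac{\alpha}{2}\vec{H}^{\mathrm{num}}_{i+1/2}\jump{\vec{g}}_{i+1/2} + \tfrac{1-\alpha}{2}\vec{H}_i\jump{\vec{g}}_{i+1/2},
\end{equation*}
\begin{equation*}
\vec{R}_{i-1/2} = -\vec{f}^{\mathrm{num}}_{i-1/2} + \tfrac{\alpha}{2}\vec{H}^{\mathrm{num}}_{i-1/2}\jump{\vec{g}}_{i-1/2} + \tfrac{1-\alpha}{2}\vec{H}_i\jump{\vec{g}}_{i-1/2}.
\end{equation*}
With this notation, cell $i$ of \eqref{semidiscrete_wb} reads exactly $\vec{L}_{i+1/2} + \vec{R}_{i-1/2} = \vec{0}$, which is the three-point identity~\eqref{algebraic_condition_wb} with $(\vec{u}_-,\vec{u}_0,\vec{u}_+) = (\vec{u}_{i-1},\vec{u}_i,\vec{u}_{i+1})$.

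\emph{Sufficiency.} Assume the two stated relations hold. Then $\vec{L}_{i+1/2} = \vec{f}(\vec{u}_i)$ and $\vec{R}_{i-1/2} = -\vec{f}(\vec{u}_i)$. Adding them gives zero, so $\partial_t\vec{u}_i = \vec{0}$ for every $i$.

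\emph{Necessity.} This mirrors the proofs of Lemmas~\ref{algebraic_lemma} and~\ref{algebraic_lemma_nc_form3}. In \eqref{algebraic_condition_wb}, take $\vec{u}_0 = \vec{u}_-$. Consistency $\vec{f}^{\mathrm{num}}(\vec{u},\vec{u}) = \vec{f}(\vec{u})$ and the vanishing jump $\vec{g}(\vec{u}_0)-\vec{g}(\vec{u}_-) = \vec{0}$ reduce the condition to $\vec{L}(\vec{u}_-,\vec{u}_+) - \vec{f}(\vec{u}_-) = \vec{0}$, which is the first relation. Taking $\vec{u}_0 = \vec{u}_+$ kills the right-hand jump and gives $\vec{R}(\vec{u}_-,\vec{u}_+) + \vec{f}(\vec{u}_+) = \vec{0}$, the second relation. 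Conversely, adding the two pointwise relations at a common middle state recovers \eqref{algebraic_condition_wb}, using that the $\vec{H}(\vec{u}_0)$ terms are attached to the middle state in both.

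The main obstacle is interpreting the quantifier in the theorem. Definition~\ref{finite_volume_wb} only involves the particular steady-state sequence, not arbitrary triples. Lemma~\ref{algebraic_wb}, by contrast, is stated for all triples, and there the specialization $\vec{u}_0=\vec{u}_\pm$ is legitimate. I would therefore read the ``only if'' direction in the class-of-steady-states sense: the scheme must be well balanced for every discrete steady-state configuration of the considered family. That family is closed under repeating a state, since a constant pair is trivially a local steady state, and this closure lets the degenerate triples be applied. I would state this interpretation explicitly and then conclude directly from Lemma~\ref{algebraic_wb}.
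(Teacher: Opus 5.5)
Your proposal is correct and follows the paper's route. The paper obtains Theorem~\ref{well_balanced_fluxes} ``immediately'' from Lemma~\ref{algebraic_wb}, whose proof is exactly your argument: specialize the middle state to $\vec{u}_0 = \vec{u}_\pm$ for necessity, and add the two interface relations at a common middle state for sufficiency. The paper never addresses the quantifier mismatch you point out (the lemma quantifies over all triples, while Definition~\ref{finite_volume_wb} concerns a single steady state), so your explicit reading of the ``only if'' direction over a family of steady states closed under repeating a state is the appropriate way to make that direction meaningful.
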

As a consequence, every well-balanced steady state is preserved by an SBP method on curved meshes if the numerical fluxes satisfy~\eqref{algebraic_condition_wb}. Indeed, we can formulate the following result.
\begin{theorem}
    \label{th:wb_curvilinear}
    Consider the semi-discretization~\eqref{semi_SBP} with the volume and surface terms given by~\eqref{volume_terms_curvilinear} and~\eqref{surface_terms_curvilinear}.
    If the numerical fluxes $\vec{f}^{\mathrm{vol}}$, $\vec{H}^{\mathrm{vol}}$ are consistent with $\vec{f}$ and $\vec{H}$, symmetric, well-balanced as in Theorem~\ref{well_balanced_fluxes}, both the mass matrix $\mtx{M}$ and the boundary operators $\mtx{R}^T \mtx{B} \mtx{N_j} \mtx{R}$ are diagonal, and the metric terms $\vec{X}_\xi, \vec{X}_\eta, \vec{Y}_\xi, \vec{Y}_\eta$ satisfy the metric identities~\eqref{metric_identities} at the discrete level, the semi-discretization is well-balanced, if the numerical fluxes $\vec{f}^{\mathrm{num}}$, $\vec{H}^{\mathrm{num}}$ are well-balanced as in Theorem~\ref{well_balanced_fluxes}.
\end{theorem}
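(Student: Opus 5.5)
We must show that a steady state with nodal values $\vec{u}_i$ in every element, for which the two-point fluxes satisfy the well-balanced relations of Theorem~\ref{well_balanced_fluxes}, gives $\vec{\mathrm{VOL}} + \vec{\mathrm{SURF}} = \vec{0}$ at every node. The plan is to work on the reference element and treat each coordinate direction separately. In direction $j$ we use the well-balanced identities to rewrite every two-point contribution as a pointwise quantity. Consistency of $\mtx{D}^j$ ($\mtx{D}^j\vec{1}=\vec{0}$) and the discrete metric identities~\eqref{discrete_metric_identities} should then make the volume terms collapse, and the SBP property~\eqref{sbp_property} should cancel what remains against the surface terms.

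\textbf{Volume terms.} Fix a node $i$ and direction $j$. Apply the first relation of Theorem~\ref{well_balanced_fluxes} to the pair $(\vec{u}_i,\vec{u}_k)$, with the volume fluxes in the role of the numerical fluxes and $\vec{u}_i$ as the left state. Multiplying by two and using $\tilde{\vec{g}}^j$ with the averaged metric $\mean{\hat{\vec{n}}^j}_{i,k}$, this gives
\begin{equation*}
2\tilde{\vec{f}}^{\mathrm{vol},j}(\vec{u}_i,\vec{u}_k)+\alpha\tilde{\vec{H}}^{\mathrm{vol}}(\vec{u}_i,\vec{u}_k)\jump{\tilde{\vec{g}}^j}_{i,k}+(1-\alpha)\tilde{\vec{H}}(\vec{u}_i)\jump{\tilde{\vec{g}}^j}_{i,k} = 2\mean{\hat{\vec{n}}^j}_{i,k}\cdot\vec{f}(\vec{u}_i).
\end{equation*}
Since $\mean{\hat{\vec{n}}^j}_{i,k}$ enters linearly, this identity must hold for arbitrary normal directions. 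That is legitimate, because Theorem~\ref{well_balanced_fluxes} is applied componentwise to the Cartesian fluxes $\vec{f}^{\mathrm{num},1},\vec{f}^{\mathrm{num},2}$.

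Summing against $\mtx{D}^j_{i,k}$ gives $\vec{\mathrm{VOL}}_i=\sum_j\sum_k \mtx{D}^j_{i,k}(\hat{\vec{n}}^j_i+\hat{\vec{n}}^j_k)\cdot\vec{f}(\vec{u}_i)$. Consistency kills the $\hat{\vec{n}}^j_i$ part. The remaining part is $\bigl(\sum_j (\mtx{D}^j\hat{\vec{n}}^j)_i\bigr)\cdot\vec{f}(\vec{u}_i)$, which vanishes by the discrete metric identities. Hence $\vec{\mathrm{VOL}}_i=\vec{0}$.

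\textbf{Surface terms.} Use the mixed relations at the interface: the first relation of Theorem~\ref{well_balanced_fluxes} on the element where the node is the left state, and the second on the neighbour. This shows that the combination $\mtx{B}\tilde{\vec{f}}^{\mathrm{num}}+\tfrac{\alpha}{2}|\mtx{B}|\tilde{\mtx{H}}^{\mathrm{num}}\jump{\tilde{\vec{g}}}+\tfrac{1-\alpha}{2}|\mtx{B}|\tilde{\mtx{H}}\jump{\tilde{\vec{g}}}$ at a boundary node equals $\mtx{B}$ times the interior normal flux $\tilde{\vec{f}}$ of that node. Here the sign of the normal ($\mtx{B}$ versus $|\mtx{B}|$) selects the appropriate relation. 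This contribution then cancels the term $-\sum_j\mtx{M}^{-1}\mtx{R}^T\mtx{B}\mtx{N}^j\mtx{R}\tilde{\vec{f}}^j$, because the surface metric terms coincide with the interior contravariant vectors at boundary nodes. Both pieces are diagonal by assumption. Therefore $\vec{\mathrm{SURF}}=\vec{0}$, and $\partial_t\vec{u}=\vec{0}$.

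\textbf{Main obstacle.} The hardest step is bookkeeping the metric terms. One must check that the volume fluxes built with $\mean{\hat{\vec{n}}^j}_{i,k}$ still satisfy the well-balanced identity exactly. One must also check that the nonconservative jumps $\jump{\tilde{\vec{g}}^j}$ with averaged metrics combine with the $(1-\alpha)$ pointwise term so that exactly $\hat{\vec{n}}^j_i+\hat{\vec{n}}^j_k$ appears; if they did not, this would spoil the use of the metric identities. If the averaged-metric identity fails, I would fall back to the argument of Waruszewski et al.~\cite{WARUSZEWSKI2022111507}. That argument splits each two-point term into a symmetric part plus a part antisymmetric in $(i,k)$, and it applies the metric identity only to the symmetric part.
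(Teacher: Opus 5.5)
Your proposal is correct and follows essentially the same route as the paper. The well-balanced identity, applied to the symmetric volume fluxes with the averaged contravariant vectors $\mean{\hat{\vec{n}}^j}_{i,k}$, turns each volume contribution into $(\hat{\vec{n}}^j_i + \hat{\vec{n}}^j_k)\cdot\vec{f}(\vec{u}_i)$; consistency of $\mtx{D}^j$ removes the $\hat{\vec{n}}^j_i$ part, and the discrete metric identities~\eqref{discrete_metric_identities} remove the rest. Your surface-term argument via the two interface relations of Theorem~\ref{well_balanced_fluxes} just spells out what the paper summarizes as ``treated as in the finite volume scheme''. The fallback you mention is not needed, since the paper's definition $\tilde{\vec{g}}^j = \mean{\hat{\vec{n}}^j}\circ(\vec{g}^1,\vec{g}^2)$ makes the averaged-metric identity hold exactly.
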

\begin{proof}
For the proof, it is sufficient to show that the volume terms vanish, as the surface term can be treated as in the finite volume scheme.
\begin{equation}
\vec{\mathrm{VOL}}_i := \sum_{j = 1}^d \left ( \sum_{k = 1}^N 2 \mtx{D}^j_{i,k} \tilde{\vec{f}} ^{\mathrm{vol},j}\left (\vec{u}_i, \vec{u}_k \right ) + \sum_{k = 1}^N \mtx{D}^j_{i,k} \tilde{\vec{H}}^{\mathrm{vol}}\left (\vec{u}_i, \vec{u}_k \right ) \jump{\tilde{\vec{g}}^j}_{i,k} \right ).
\end{equation}
We first expand the first components, i.e., $j=1$, yielding
\begin{multline}
\sum_{k=1}^N \biggl( 2 \mtx{D}_{i,k}^1 \left (\mean{n^1_1}_{i,k} \vec{f}^{\mathrm{vol},1}(\vec{u}_i, \vec{u}_k) + \mean{n^1_2}_{i,k} \vec{f}^{\mathrm{vol},2}(\vec{u}_i, \vec{u}_k) \right )\\
+ \alpha \mtx{D}_{i,k}^1 \left ( \mean{n^1_1}\vec{H}^{\mathrm{vol},1}(\vec{u}_i, \vec{u}_k) \jump{\vec{g}^1}_{i,k} + \mean{n^1_2}\vec{H}^{\mathrm{vol},2}(\vec{u}_i, \vec{u}_k) \jump{\vec{g}^2}_{i,k} \right ) \\
+(1-\alpha) \mtx{D}_{i,k}^1 \left ( \mean{n^1_1}\vec{H}^{1}(\vec{u}_i) \jump{\vec{g}^1}_{i,k} + \mean{n^1_2}\vec{H}^{2}(\vec{u}_i) \jump{\vec{g}^2}_{i,k} \right ) \biggr) \\
 = \sum_{k=1}^N  \mtx{D}_{i,k}^1 \left (\mean{n^1_1}_{i,k} \vec{f}^1(\vec{u}_i)  + \mean{n^1_2}_{i,k} \vec{f}^2(\vec{u}_i) \right ) =
  \sum_{k=1}^N \left( \vec{f}^1(\vec{u}_i) \mtx{D}_{i,k}^1 \left ( \vec{Y}_{\eta} \right )_k - \vec{f}^2(\vec{u}_i)\mtx{D}_{i,k}^1 \left ( \vec{X}_{\eta} \right )_k \right),
\end{multline}
where we have used in the last row the well-balanced condition of the fluxes given in Theorem~\ref{well_balanced_fluxes}. Repeating the same steps also for the second direction we obtain the additional contribution
\begin{equation}
\sum_{k=1}^N \left( \vec{f}^2(\vec{u}_i)\mtx{D}_{i,k}^2 \left ( \vec{X}_{\xi} \right )_k - \vec{f}^1(\vec{u}_i)\mtx{D}_{i,k}^2 \left ( \vec{Y}_{\xi} \right )_k \right).
\end{equation}
Summing up the two contributions and considering the metric identities~\eqref{discrete_metric_identities}, the volume term vanishes.
\end{proof}

Theorem~\ref{well_balanced_fluxes} provides conditions to construct well-balanced finite volume methods.
Theorem~\ref{th:wb_curvilinear} shows that the same property extends directly to high-order SBP methods on curved meshes.
Thus, it suffices to derive the well-balanced methods within the simple framework of two-point finite volume fluxes.
Some cases covered by Theorem~\ref{well_balanced_fluxes} are lake-at-rest steady states for shallow-water equations~\cite{FJORDHOLM20115587, gassner2016shallowwater, Ranocha2017} and isothermal steady states of the compressible Euler equations with gravity~\cite{WARUSZEWSKI2022111507, artiano2025structurepreservinghighordermethodscompressible,ChandrashekarWB}.
In particular, the well-balancedness of the dispersive shallow-water system analyzed in Lemma~\ref{lemma_wb_saintemarie} extends to curvilinear meshes.
Table~\ref{table_wb} summarizes the equivalent well-balanced conditions for each semi-discretization of the nonconservative product.

\begin{table}[htbp]
    \sisetup{
  output-exponent-marker=\text{e},
  round-mode=places,
  round-precision=2
}
\centering
  \caption{Equivalence of well-balanced schemes for different semi-discretization of the nonconservative product.}
  \label{table_wb}
  \centering
  \small
    \begin{tabular}{ll}
      \toprule
       \multicolumn{1}{c}{Semi-discretization}
      & \multicolumn{1}{c}{Well-balanced condition on the right interface}\\
      \midrule
 $\vec{H}_i\frac{\jump{\vec{g}}_{i+1/2} + \jump{\vec{g}}_{i-1/2}}{2 \Delta x_i}$
& $ \vec{f}^\mathrm{num}(\vec{u}_i,\vec{u}_{i+1}) +\frac{1}{2}\vec{H}(\vec{u}_i) \jump{\vec{g}}_{i+1/2} = \vec{f}(\vec{u}_i)$
\\
 $\vec{H}_i \frac{\vec{g}_{i+1/2}^{\mathrm{num}} - \vec{g}^{\mathrm{num}}_{i-1/2}}{\Delta x_i}$
& $\vec{f}^{\mathrm{num}}(\vec{u}_i, \vec{u}_{i+1}) + \vec{H}(\vec{u}_i)\vec{g}^{\mathrm{num}}(\vec{u}_i, \vec{u}_{i+1}) = \vec{f}(\vec{u}_i)$
\\
 $\frac{\vec{H}^{\mathrm{num}}_{i+1/2} \jump{\vec{g}}_{i+1/2} + \vec{H}^{\mathrm{num}}_{i-1/2} \jump{\vec{g}}_{i-1/2}}{2 \Delta x_i}$
& $\vec{f}^\mathrm{num}(\vec{u}_i,\vec{u}_{i+1}) +\frac{1}{2}\vec{H}^{\mathrm{num}}(\vec{u}_i,\vec{u}_{i+1}) \jump{\vec{g}}_{i+1/2} = \vec{f}(\vec{u}_i)$
\\
 $\frac{\left (\vec{H} \vec{g} \right ) ^{\mathrm{num}}_{i+1/2} - \left (\vec{H} \vec{g} \right ) ^{\mathrm{num}}_{i-1/2}}{\Delta x_i} - \frac{\vec{H}^{\mathrm{num}}_{i+1/2} - \vec{H}^{\mathrm{num}}_{i-1/2}}{\Delta x_i} \vec{g}_i$
& $\vec{f}^\mathrm{num}(\vec{u}_i,\vec{u}_{i+1}) +\left ( \vec{H} \vec{g} \right ) ^{\mathrm{num}}(\vec{u}_i,\vec{u}_{i+1}) - \vec{H}^{\mathrm{num}}(\vec{u}_i, \vec{u}_{i+1}) \vec{g}(\vec{u}_i) = \vec{f}(\vec{u}_i)$ \\
$\frac{\vec{D}^-(\vec{u}_{i-1},\vec{u}_i) + \vec{D}^+(\vec{u}_i, \vec{u}_{i+1})}{\Delta x_i}$ & $\vec{D}^+(\vec{u}_i, \vec{u}_{i+1}) = \vec{0}$\\ 
      \bottomrule
    \end{tabular}
\end{table}

\section{Numerical experiments}
\label{sec:numerical_experiments}
We test the fluxes described in Section~\ref{sec:numerical_fluxes}, verifying their entropy conservation/stability properties in numerical tests.
We have implemented the methods and performed the numerical tests in Julia~\cite{bezanson2017julia} using Trixi.jl~\cite{ranocha2022adaptive,schlottkelakemper2021purely}.
Unless stated otherwise, we employ discontinuous Galerkin spectral element methods (DGSEMs).
The solution is approximated by Lagrange polynomials of degree $p$ defined at the Gauss-Lobatto-Legendre nodes.
The corresponding quadrature rule defines the mass matrix $\mtx{M}$ and the differentiation matrix $\mtx{D}_{ik} = l'_k(\xi_i)$ on the reference element $\xi \in [-1, 1]$, where $l_k$ is the $k$-th Lagrange polynomial.
This differentiation matrix satisfies the SBP property~\eqref{sbp_property} (see~\cite{gassner2013skew}), thus fits into the proposed framework.
Time integration is carried out with the fourth-order strong stability-preserving Runge-Kutta method of~\cite{kraaijevanger1991contractivity} implemented in OrdinaryDiffEq.jl~\cite{rackauckas2017differentialequations}.

\subsection{Variable-coefficient advection equation}
In this section, we assess the entropy-conservation property of the semi-discretization for the one-dimensional linear advection equation with variable coefficient.
Simulations are performed on a uniform mesh with $32$ elements, on the domain $[-1, 1]$ over a time interval $[0, 1]$, with periodic boundary conditions and with a fixed CFL number of $0.01$.
The initial condition is given by
\begin{equation}\label{initial_condition_adv}
u(0, x) = 2 + \sin(\pi (x-0.7)), \quad a(x) = 2 + \cos(\pi x).
\end{equation}
The equation is discretized using the first form~\eqref{FV_form1},  which is entropy conservative, as seen in Section~\ref{sec:fluxes_variable_coefficient_advection}.
In Table~\ref{tab:advection_variable_coefficient} we report the norm of the entropy $||U - U_0||_M$, normalized with respect to $U_0$, and the entropy residual $\vec{\omega}^T \mtx{M} \partial_t \vec{u}$ for different polynomial degrees at the final time $T = 1$, where $U = \frac{u^2}{a}$ and $\omega = 2\frac{u}{a}$. As shown in the table, the entropy is conserved up to machine precision for all polynomial degrees.
\begin{table}[htbp]
    \sisetup{
  output-exponent-marker=\text{e},
  round-mode=places,
  round-precision=2
}
\centering
  \caption{Entropy errors and entropy residual for the variable-coefficient advection scheme~\eqref{eq_adv_scheme} with initial condition~\eqref{initial_condition_adv} at the final time $T = 1$, discretized with DGSEM.}
  \label{tab:advection_variable_coefficient}
  \centering
    \begin{tabular}{r rrrrr}
      \toprule
      & \multicolumn{5}{c}{Polynomial degree $p$} \\
          & \multicolumn{1}{c}{1}
          & \multicolumn{1}{c}{2}
          & \multicolumn{1}{c}{3}
          & \multicolumn{1}{c}{4}
          & \multicolumn{1}{c}{5}\\
      \midrule
$||U - U_0||_{\vec{M}}$   & \num{9.51993e-12}  & \num{3.70763e-12} & \num{2.53573e-12}       & \num{2.35739e-12} & \num{2.47094e-12}       \\
$\vec{\omega}^T \mtx{M} \partial_t \vec{u}$  & \num{-8.88178e-16} & \num{4.44089e-16} & \num{3.33067e-16}       & \num{5.55112e-16} & \num{5.55112e-16} \\
      \bottomrule
    \end{tabular}
\end{table}

\subsection{General polynomial equation}
Here, we propose a simple numerical example to show the conservation property of the nonconservative equation with a general monomial flux of degree $k > 2$ as in~\eqref{eq_poly} and periodic boundary conditions. The initial condition is given by
\begin{equation}
u(0, x) = \sin(\pi x), \quad x \in [-1, 1].
\end{equation}
The domain is discretized with $32$ elements and the time interval is $[0, T]$, with $T = \frac{T_{\text{max}}}{2}$, where $T_{\text{max}}$ is given by~\cite{ranocha2018generalised}
\begin{equation}\label{tmax}
    T_{\text{max}} = -\Bigl (\min_{x} n(m+n-1) \pi \sin(\pi x)^{m+n-2} \cos(\pi x) \Bigr  )^{-1},
\end{equation}
and with a fixed CFL number $0.001$.
The two schemes~\eqref{ec_1_monomial} and~\eqref{ec_2_monomial}, that here we denote with EC1 and EC2 respectively, are both investigated for different pairs of $m, n \in \mathbb{N}$, and the entropy rate residual $\vec{\omega}^T \mtx{M} \partial_t \vec{u}$, the entropy error $||U - U_0||_{\mtx{M}}$, and the conservation of the mass $\vec{1}^T \mtx{M} \partial_t \vec{u}$ are reported in Tables~\ref{tab:polynomial}, \ref{tab:polynomial_2} and \ref{tab:polynomial_3}.
Note that when $n$ is even, the scheme $EC1$ is not well-behaved when $u_- = -u_+ \neq 0$, as discussed in Section~\ref{sec:general_polynomial_equation}. However, in the present test the solution remains smooth for the entire simulation and no discontinuities develop.
\begin{table}[htbp]
      \sisetup{
  output-exponent-marker=\text{e},
  round-mode=places,
  round-precision=2
}
\centering
  \caption{Entropy error $\| U - U_0 \|_M$, entropy residual $\vec{\omega}^T \mtx{M} \partial_t \vec{u}$, and conservation of mass $\vec{1}^T \mtx{M} \partial_t \vec{u}$ for the general polynomial equation~\eqref{eq_poly} for different polynomial degrees $p$ and the pairs $(m,n) = (4,4)$ and $(5,5)$ at the final time $T = \frac{T_{\text{max}}}{2}$ as in~\eqref{tmax}.}
  \label{tab:polynomial}
  \begin{subtable}{0.47\textwidth}
  \centering
    \caption{Scheme EC1~\eqref{ec_1_monomial} for $m,n = 4$.}
            \begin{tabular}{r rrr}
\toprule
\multicolumn{1}{c}{$p$} &
\multicolumn{1}{c}{$||U - U_0||_{\vec{M}}$} &
\multicolumn{1}{c}{$\vec{\omega}^T \mtx{M} \partial_t \vec{u}$} &
\multicolumn{1}{c}{$\vec{1}^T \mtx{M} \partial_t \vec{u}$} \\
\midrule
\multicolumn{1}{c}{1} & \num{2.01005e-3}  & \num{-3.49881e-2}  & 0 \\
\multicolumn{1}{c}{2} & \num{6.17886e-4} & \num{1.33115e-2}   & \num{1.38778e-17} \\
\multicolumn{1}{c}{3} & \num{1.2652e-4}  & \num{1.53745e-3}   & \num{-3.46945e-17} \\
\multicolumn{1}{c}{4} & \num{1.27113e-4} & \num{2.27842e-3}   & \num{-2.77556e-17} \\
\multicolumn{1}{c}{5} & \num{1.41035e-4} & \num{2.18205e-3}   & \num{-6.93889e-17} \\

\bottomrule
\end{tabular}

  \end{subtable}%
  \hspace{\fill}
  \begin{subtable}{0.49\textwidth}
  \centering
    \caption{Scheme EC2~\eqref{ec_2_monomial} for $m,n = 4$.}
    \begin{tabular}{r rrr}
\toprule
\multicolumn{1}{c}{$p$} &
\multicolumn{1}{c}{$||U - U_0||_{\vec{M}}$} &
\multicolumn{1}{c}{$\vec{\omega}^T \mtx{M} \partial_t \vec{u}$} &
\multicolumn{1}{c}{$\vec{1}^T \mtx{M} \partial_t \vec{u}$} \\
\midrule
\multicolumn{1}{c}{1} & \num{4.02156e-12} & \num{-9.71445e-17} & \num{6.93889e-18} \\
\multicolumn{1}{c}{2} & \num{9.59351e-12} & \num{-2.22045e-16} & \num{-2.77556e-17} \\
\multicolumn{1}{c}{3} & \num{5.09838e-11} & \num{2.60902e-15}  & \num{-9.02056e-17} \\
\multicolumn{1}{c}{4} & \num{8.23856e-11} & \num{-8.98587e-16} & \num{8.32667e-17} \\
\multicolumn{1}{c}{5} & \num{9.74345e-11} & \num{-1.249e-16}   & \num{-6.245e-17} \\
\bottomrule
\end{tabular}
  \end{subtable}%
  \\
  \begin{subtable}{0.49\textwidth}
  \centering
    \caption{Scheme EC1~\eqref{ec_1_monomial} for $m,n = 5$.}
               \begin{tabular}{r rrr}
\toprule
\multicolumn{1}{c}{$p$} &
\multicolumn{1}{c}{$||U - U_0||_{\vec{M}}$} &
\multicolumn{1}{c}{$\vec{\omega}^T \mtx{M} \partial_t \vec{u}$} &
\multicolumn{1}{c}{$\vec{1}^T \mtx{M} \partial_t \vec{u}$} \\
\midrule
\multicolumn{1}{c}{1} & \num{2.56126e-13}  & \num{-2.08167e-17}  & \num{-6.93889e-18} \\
\multicolumn{1}{c}{2} & \num{7.63255e-13}  & \num{-3.81639e-17}  & \num{-6.93889e-18} \\
\multicolumn{1}{c}{3} & \num{4.84241e-13}  & \num{1.04083e-17}   & \num{3.46945e-17}  \\
\multicolumn{1}{c}{4} & \num{3.18744e-13}  & \num{1.73472e-17}   & \num{2.08167e-17}  \\
\multicolumn{1}{c}{5} & \num{2.82917e-13}  & \num{2.08167e-16}   & \num{-2.08167e-17} \\
\bottomrule
\end{tabular}
  \end{subtable}%
  \hspace{\fill}
  \begin{subtable}{0.49\textwidth}
  \centering
    \caption{Scheme EC2~\eqref{ec_2_monomial} for $m,n = 5$.}
              \begin{tabular}{r rrr}
\toprule
\multicolumn{1}{c}{$p$} &
\multicolumn{1}{c}{$||U - U_0||_{\vec{M}}$} &
\multicolumn{1}{c}{$\vec{\omega}^T \mtx{M} \partial_t \vec{u}$} &
\multicolumn{1}{c}{$\vec{1}^T \mtx{M} \partial_t \vec{u}$} \\
\midrule
\multicolumn{1}{c}{1} & \num{3.86327e-12}  & \num{2.42861e-16}   & \num{1.38778e-17} \\
\multicolumn{1}{c}{2} & \num{1.25259e-11}  & \num{-2.84495e-16}  & \num{-2.08167e-17} \\
\multicolumn{1}{c}{3} & \num{7.74809e-12}  & \num{-5.6552e-16}   & \num{3.46945e-17} \\
\multicolumn{1}{c}{4} & \num{6.48016e-12}  & \num{-7.11237e-16}  & \num{4.85723e-17} \\
\multicolumn{1}{c}{5} & \num{7.51926e-12}  & \num{-5.96745e-16}  & \num{3.46945e-17} \\
\bottomrule
\end{tabular}
  \end{subtable}%
\end{table}

\begin{table}[htbp]
      \sisetup{
  output-exponent-marker=\text{e},
  round-mode=places,
  round-precision=2
}
\centering
  \caption{Entropy error $\|U - U_0 \|_{\vec{M}}$, entropy residual $\vec{\omega}^T \mtx{M} \partial_t \vec{u}$, and conservation of mass $\vec{1}^T \mtx{M} \partial_t \vec{u}$ for the general polynomial equation~\eqref{eq_poly} for different polynomial degrees $p$ and the pairs $(m,n) = (4,5)$ and $(5,4)$ at the final time $T = \frac{T_{\text{max}}}{2}$ as in~\eqref{tmax}.}
  \label{tab:polynomial_2}
  \begin{subtable}{0.47\textwidth}
  \centering
    \caption{Scheme EC1~\eqref{ec_1_monomial} for $(m,n) = (4,5)$.}
            \begin{tabular}{r rrr}
\toprule
\multicolumn{1}{c}{$p$} &
\multicolumn{1}{c}{$||U - U_0||_{\vec{M}}$} &
\multicolumn{1}{c}{$\vec{\omega}^T \mtx{M} \partial_t \vec{u}$} &
\multicolumn{1}{c}{$\vec{1}^T \mtx{M} \partial_t \vec{u}$} \\
\midrule
\multicolumn{1}{c}{1} & \num{3.00959e-13} & \num{-1.04083e-16} & \num{-1.29956e-5} \\
\multicolumn{1}{c}{2} & \num{8.81719e-13} & \num{-4.51028e-17} & \num{-6.00284e-6} \\
\multicolumn{1}{c}{3} & \num{5.29314e-13} & \num{-3.46945e-18} & \num{-1.1202e-5} \\
\multicolumn{1}{c}{4} & \num{3.49948e-13} & \num{1.38778e-17}  & \num{-1.14851e-5} \\
\multicolumn{1}{c}{5} & \num{3.08227e-13} & \num{1.70003e-16}  & \num{-1.11113e-5} \\
\bottomrule
\end{tabular}
  \end{subtable}%
  \hspace{\fill}
  \begin{subtable}{0.49\textwidth}
  \centering
    \caption{Scheme EC2~\eqref{ec_2_monomial} for $(m,n) = (4,5)$.}
    \begin{tabular}{r rrr}
\toprule
\multicolumn{1}{c}{$p$} &
\multicolumn{1}{c}{$||U - U_0||_{\vec{M}}$} &
\multicolumn{1}{c}{$\vec{\omega}^T \mtx{M} \partial_t \vec{u}$} &
\multicolumn{1}{c}{$\vec{1}^T \mtx{M} \partial_t \vec{u}$} \\
\midrule
\multicolumn{1}{c}{1} & \num{2.29881e-12} & \num{-2.84495e-16} & \num{5.42825e-5} \\
\multicolumn{1}{c}{2} & \num{9.84176e-12} & \num{-2.22045e-16} & \num{-3.6116e-5} \\
\multicolumn{1}{c}{3} & \num{7.2754e-12}  & \num{8.32667e-17}  & \num{2.50735e-5} \\
\multicolumn{1}{c}{4} & \num{8.6639e-11}  & \num{-2.22045e-16} & \num{-3.49375e-5} \\
\multicolumn{1}{c}{5} & \num{1.72526e-10} & \num{-3.05311e-15} & \num{4.15068e-6} \\
\bottomrule
\end{tabular}
  \end{subtable}%
  \\
  \begin{subtable}{0.49\textwidth}
  \centering
    \caption{Scheme EC1~\eqref{ec_1_monomial} for $(m,n) = (5,4)$.}
               \begin{tabular}{r rrr}
\toprule
\multicolumn{1}{c}{$p$} &
\multicolumn{1}{c}{$||U - U_0||_{\vec{M}}$} &
\multicolumn{1}{c}{$\vec{\omega}^T \mtx{M} \partial_t \vec{u}$} &
\multicolumn{1}{c}{$\vec{1}^T \mtx{M} \partial_t \vec{u}$} \\
\midrule
\multicolumn{1}{c}{1} & \num{2.0405e-13}  & \num{-3.46945e-17} & \num{-1.29956e-5} \\
\multicolumn{1}{c}{2} & \num{5.24593e-13} & \num{3.46945e-18}  & \num{-6.00284e-6} \\
\multicolumn{1}{c}{3} & \num{3.67284e-13} & \num{8.67362e-17}  & \num{-1.1202e-5} \\
\multicolumn{1}{c}{4} & \num{2.98981e-13} & \num{-6.245e-17}   & \num{-1.14851e-5} \\
\multicolumn{1}{c}{5} & \num{2.97595e-13} & \num{-2.91434e-16} & \num{-1.11113e-5} \\
\bottomrule
\end{tabular}
  \end{subtable}%
  \hspace{\fill}
  \begin{subtable}{0.49\textwidth}
  \centering
    \caption{Scheme EC2~\eqref{ec_2_monomial} for $(m,n) = (5,4)$.}
              \begin{tabular}{r rrr}
\toprule
\multicolumn{1}{c}{$p$} &
\multicolumn{1}{c}{$||U - U_0||_{\vec{M}}$} &
\multicolumn{1}{c}{$\vec{\omega}^T \mtx{M} \partial_t \vec{u}$} &
\multicolumn{1}{c}{$\vec{1}^T \mtx{M} \partial_t \vec{u}$} \\
\midrule
\multicolumn{1}{c}{1} & \num{3.12685e-12}  & \num{-3.33067e-16} & \num{-7.2775e-5} \\
\multicolumn{1}{c}{2} & \num{8.48688e-12}  & \num{4.16334e-17}  & \num{3.60184e-5} \\
\multicolumn{1}{c}{3} & \num{8.21199e-12}  & \num{-8.32667e-16} & \num{-8.9289e-6} \\
\multicolumn{1}{c}{4} & \num{4.91768e-11}  & \num{5.55112e-16}  & \num{5.49647e-5} \\
\multicolumn{1}{c}{5} & \num{1.32025e-10}  & \num{-4.30211e-16} & \num{-5.46927e-5} \\
\bottomrule
\end{tabular}
  \end{subtable}%
\end{table}

\begin{table}[htbp]
\sisetup{
  output-exponent-marker=\text{e},
  round-mode=places,
  round-precision=2
}
\centering
\caption{Entropy error $\| U - U_0 \|_{\vec{M}}$, entropy residual $\vec{\omega}^T \mtx{M} \partial_t \vec{u}$, and conservation of mass $\vec{1}^T \mtx{M} \partial_t \vec{u}$ for the general polynomial equation~\eqref{eq_poly} for different polynomial degrees $p$ and the pairs $(m,n) = (3,5)$ and $(5,3)$ at the final time $T = \frac{T_{\text{max}}}{2}$ as in~\eqref{tmax}.}
\label{tab:polynomial_3}

\begin{subtable}{0.47\textwidth}
\centering
\caption{Scheme EC1~\eqref{ec_1_monomial} for $(m,n) = (3,5)$.}
\begin{tabular}{r rrr}
\toprule
\multicolumn{1}{c}{$p$} &
\multicolumn{1}{c}{$||U - U_0||_{\vec{M}}$} &
\multicolumn{1}{c}{$\vec{\omega}^T \mtx{M} \partial_t \vec{u}$} &
\multicolumn{1}{c}{$\vec{1}^T \mtx{M} \partial_t \vec{u}$} \\
\midrule
\multicolumn{1}{c}{1} & \num{4.37299e-13} & \num{1.17961e-16}  & \num{4.16334e-17} \\
\multicolumn{1}{c}{2} & \num{1.02769e-12} & \num{3.1225e-17}   & \num{-1.38778e-17} \\
\multicolumn{1}{c}{3} & \num{5.51041e-13} & \num{-3.1572e-16}  & \num{6.93889e-18} \\
\multicolumn{1}{c}{4} & \num{3.75951e-13} & \num{-3.29597e-16} & \num{-6.93889e-18} \\
\multicolumn{1}{c}{5} & \num{3.31572e-13} & \num{-3.29597e-16} & \num{0e0} \\
\bottomrule
\end{tabular}
\end{subtable}%
\hspace{\fill}
\begin{subtable}{0.49\textwidth}
\centering
\caption{Scheme EC1~\eqref{ec_1_monomial} for $(m,n) = (3,5)$.}
\begin{tabular}{r rrr}
\toprule
\multicolumn{1}{c}{$p$} &
\multicolumn{1}{c}{$||U - U_0||_{\vec{M}}$} &
\multicolumn{1}{c}{$\vec{\omega}^T \mtx{M} \partial_t \vec{u}$} &
\multicolumn{1}{c}{$\vec{1}^T \mtx{M} \partial_t \vec{u}$} \\
\midrule
\multicolumn{1}{c}{1} & \num{1.97037e-13} & \num{6.93889e-17}  & \num{-6.93889e-18} \\
\multicolumn{1}{c}{2} & \num{3.74578e-13} & \num{-4.16334e-17} & \num{4.85723e-17} \\
\multicolumn{1}{c}{3} & \num{3.18282e-13} & \num{-2.42861e-17} & \num{-6.93889e-18} \\
\multicolumn{1}{c}{4} & \num{3.28336e-13} & \num{5.89806e-17}  & \num{-2.77556e-17} \\
\multicolumn{1}{c}{5} & \num{3.65666e-13} & \num{-5.20417e-17} & \num{1.38778e-17} \\
\bottomrule
\end{tabular}
\end{subtable}%
\\
\begin{subtable}{0.47\textwidth}
\centering
\caption{Scheme EC2~\eqref{ec_2_monomial} for $(m,n) = (5,3)$.}
\begin{tabular}{r rrr}
\toprule
\multicolumn{1}{c}{$p$} &
\multicolumn{1}{c}{$||U - U_0||_{\vec{M}}$} &
\multicolumn{1}{c}{$\vec{\omega}^T \mtx{M} \partial_t \vec{u}$} &
\multicolumn{1}{c}{$\vec{1}^T \mtx{M} \partial_t \vec{u}$} \\
\midrule
\multicolumn{1}{c}{1} & \num{1.50434e-12} & \num{-9.71445e-17}  & \num{-4.16334e-17} \\
\multicolumn{1}{c}{2} & \num{4.34746e-12} & \num{-2.63678e-16}  & \num{-4.85723e-17} \\
\multicolumn{1}{c}{3} & \num{7.68244e-12} & \num{1.94289e-16}   & \num{6.93889e-17} \\
\multicolumn{1}{c}{4} & \num{1.2421e-11}  & \num{-1.05471e-15}  & \num{2.08167e-17} \\
\multicolumn{1}{c}{5} & \num{3.48745e-11} & \num{1.02696e-15}   & \num{-9.71445e-17} \\
\bottomrule
\end{tabular}
\end{subtable}%
\hspace{\fill}
\begin{subtable}{0.49\textwidth}
\centering
\caption{Scheme EC1~\eqref{ec_1_monomial} for $(m,n) = (5,3)$.}
\begin{tabular}{r rrr}
\toprule
\multicolumn{1}{c}{$p$} &
\multicolumn{1}{c}{$||U - U_0||_{\vec{M}}$} &
\multicolumn{1}{c}{$\vec{\omega}^T \mtx{M} \partial_t \vec{u}$} &
\multicolumn{1}{c}{$\vec{1}^T \mtx{M} \partial_t \vec{u}$} \\
\midrule
\multicolumn{1}{c}{1} & \num{1.21269e-12} & \num{-7.21645e-16}  & \num{2.08167e-17} \\
\multicolumn{1}{c}{2} & \num{2.05723e-12} & \num{-2.45637e-15}  & \num{-2.08167e-17} \\
\multicolumn{1}{c}{3} & \num{1.09285e-11} & \num{3.94823e-15}   & \num{3.46945e-17} \\
\multicolumn{1}{c}{4} & \num{5.54011e-12} & \num{2.30371e-15}   & \num{1.59595e-16} \\
\multicolumn{1}{c}{5} & \num{4.36042e-11} & \num{-7.70217e-16}  & \num{-6.93889e-18} \\
\bottomrule
\end{tabular}
\end{subtable}%
\end{table}

The conservation of the entropy and the entropy residual reported in Tables~\ref{tab:polynomial}--\ref{tab:polynomial_3} show that the entropy is conserved up to machine precision of all tested pairs $(m,n)$ except scheme EC1 for $m = n = 4$, in accordance with the analysis.
In particular, although the two schemes are written in a nonconservative form, they also conserve the mass of $u$, except for the scheme EC1~\eqref{ec_1_monomial} for $m,n$ both even, where the mass is not conserved up to machine precision.
Indeed, the scheme EC1~\eqref{ec_1_monomial} conserves the mass also for the pairs $(m,n)$ equal to $(5,4)$, $(4,5)$, $(5,5)$ and $(3,5)$, as shown in the Tables~\ref{tab:polynomial_2}--\ref{tab:polynomial_3}.

\subsection{Hyperbolized Sainte-Marie equations}
We test the entropy conservation property of the numerical flux in Theorem~\ref{th:ec_saintmarie} for the hyperbolized Sainte-Marie equations~\eqref{eq:escalante_saintemarie}.
We consider the initial condition
\begin{equation}
  \label{ic:escalante_saintemarie}
\begin{aligned}
h(0,x) = 1 + \exp(\sin(2 \pi x)) \quad \text{and} \quad v(0,x) = w(0,x) = 1, \quad p(0,x) = 10
\end{aligned}
\end{equation}
with the bottom topography given by
\begin{equation}
b(x) = 0.1 \exp(\sin(2\pi x )),
\end{equation}
with $g = 1$, $c = 2\sqrt{g b_0}$ and periodic boundary conditions on the time interval $[0, 0.1]$.
We consider a domain $[0, 1]$ with 128 elements.
The CFL number is fixed to $\text{CFL}= 0.1$.
The energy-conservative numerical flux in Theorem~\ref{th:ec_saintmarie} is used with $\alpha_1 = 1/2$, $\alpha_2 = 1$ and $\alpha_3 = 2/3$. The energy error and residual are shown in Table~\ref{tab:escalante_saintemarie} for polynomial degree from 1 to 5. The energy is conserved up to machine precision for all the simulations as expected.

\begin{table}[htbp]
    \sisetup{
  output-exponent-marker=\text{e},
  round-mode=places,
  round-precision=2
}
\centering
  \caption{Entropy errors and entropy residual for the
Sainte-Marie entropy conservative scheme~\eqref{eq:ec_saintmarie_flux} with initial condition~\eqref{ic:escalante_saintemarie} at the final time $T = 0.1$, with DGSEM and varying polynomial degrees. }
  \label{tab:escalante_saintemarie}
  \centering
    \begin{tabular}{r rrrrr}
      \toprule
      & \multicolumn{5}{c}{Polynomial degree $p$} \\
          & \multicolumn{1}{c}{1}
          & \multicolumn{1}{c}{2}
          & \multicolumn{1}{c}{3}
          & \multicolumn{1}{c}{4}
          & \multicolumn{1}{c}{5}\\
      \midrule
$||U - U_0||_{\vec{M}}$   & \num{6.31117e-12}  & \num{2.00636e-12} & \num{9.70995e-13}       & \num{6.22201e-13} & \num{5.03705e-13}       \\
$\vec{\omega}^T \mtx{M} \partial_t \vec{u}$  & \num{-2.47802e-13} & \num{2.22378e-13} & \num{4.89747e-13} & \num{1.71551e-11} & \num{-1.48706e-11} \\
      \bottomrule
    \end{tabular}
\end{table}

\subsubsection{Well-balancedness on curvilinear coordinates}
Next, we assess the well-balanced property of the hyperbolized Sainte-Marie equations scheme defined in Theorem~\ref{th:ec_saintmarie} on curvilinear mesh with a discontinuous bottom topography. The computational domain is discretized with 16 quadrilateral elements, and we adopt the same warping as in~\cite{RANOCHA2025113471}. The domain size is $[ 0, \sqrt{2}]^2$ and the mesh is reported in Figure~\ref{warped_mesh_2d}. The initial condition is given by the following lake-at-rest steady state

\begin{figure}[htb]
    \centering
    \includegraphics[width=0.4\textwidth]{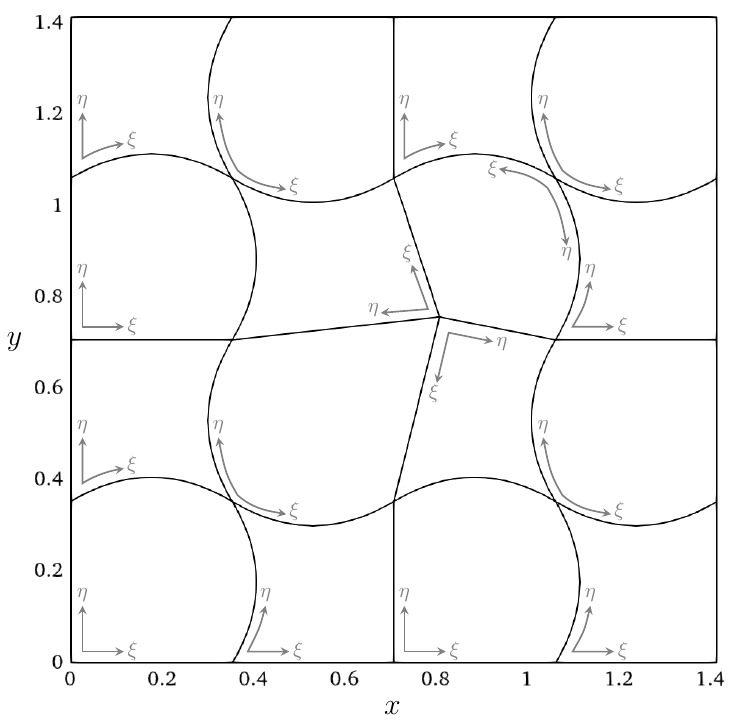}
    \caption{Warped mesh used for the well-balancedness test of the 2D hyperbolized Sainte-Marie system and for the convergence test of the 2D compressible Euler equations in nonconservative form~\cite{RANOCHA2025113471}.}
    \label{warped_mesh_2d}
\end{figure}

\begin{equation}
H_0 = h + b = 3, \quad v_1 = v_2 = w = p = 0,
\end{equation}
with $g = 9.81$, $c = 1.98$, and wall boundary conditions on the time interval $[0, 100]$. The CFL is fixed to CFL$=1$.
The bottom topography is defined as
\begin{equation}
b(x,y) =
\frac{1.5}{\exp\!\left(\tfrac12\big((x-1)^2+(y-1)^2\big)\right)}
+
\frac{0.75}{\exp\!\left(\tfrac12\big((x+1)^2+(y+1)^2\big)\right)} .
\end{equation}
To introduce a discontinuity, the bottom topography is modified in mesh
element $7$ as
\begin{equation}
b(x,y) =
2 + \frac{1}{2}\sin(2\pi x) + \frac{1}{2}\cos(2\pi y),
\qquad \text{for element } 7.
\end{equation}
The cell center of the unwarped quadrilateral corresponding to element 7 is located at $(0.9,0.5)$.
In Table~\ref{tab:escalante_saintemarie_wb}, the well-balanced errors are computed with respect to the initial condition. The lake-at-rest steady state is preserved up to machine precision for all polynomial degrees considered up to time $T = 100$.

\begin{table}[htbp]
    \sisetup{
  output-exponent-marker=\text{e},
  round-mode=places,
  round-precision=2
}
\centering
  \caption{Lake-at-rest errors for the 2D
Sainte-Marie entropy conservative and well-balanced scheme~\eqref{eq:ec_saintmarie_flux} with initial condition~\eqref{ic:escalante_saintemarie} at the final time $T = 100$, with DGSEM and varying polynomial degrees on a warped mesh.}
  \label{tab:escalante_saintemarie_wb}
  \centering
    \begin{tabular}{r rrrr}
      \toprule
      & \multicolumn{4}{c}{Polynomial degree $p$} \\
          & \multicolumn{1}{c}{2}
          & \multicolumn{1}{c}{3}
          & \multicolumn{1}{c}{4}
          & \multicolumn{1}{c}{5}\\
      \midrule

$|| H - H_0||_{\vec{M}}$   & \num{ 1.32258e-12}  & \num{1.93538e-12} & \num{2.55023e-12}       & \num{3.2363e-12} \\

$\| v_1 \|_{\vec{M}}$  & \num{5.27483e-14} & \num{1.90664e-14} & \num{7.97098e-13} & \num{9.02045e-13} \\

$\| v_2 \|_{\vec{M}}$  & \num{3.83536e-14} & \num{1.22192e-14} & \num{1.17551e-12} & \num{1.20126e-12} \\

$\| w \|_{\vec{M}}$  & \num{1.53637e-15} & \num{6.19936e-14} & \num{1.72065e-14} & \num{1.06214e-13} \\

$\| p \|_{\vec{M}}$  & \num{5.45196e-16} & \num{4.65493e-16} & \num{8.13869e-16} & \num{2.37651e-15} \\
      \bottomrule
    \end{tabular}
\end{table}

\subsection{Compressible Euler equations in nonconservative form}
In this section, we consider the two-dimensional compressible Euler equations in nonconservative form~\eqref{euler_noncons}. Following~\cite{RANOCHA2025113471}, we use the manufactured solution therein to assess the accuracy of the entropy and total energy conservative numerical flux~\eqref{eq:ec_and_kep_fluxes}. The manufactured solution in this case is given by
\begin{equation}
\varrho(t, x) = h(t,x), \quad v_1(t, x) = v_2(t, x) = 0, \quad \varrho e(t, x) = h(t,x)^2 - h(t,x)
\end{equation}
where
\begin{equation}
h(t,x) = 2 + 0.1 \sin (\sqrt{2} \pi ( x - t))
\end{equation}
on the time interval $[0, 2]$ and periodic boundary conditions, with a fixed CFL number $0.01$. The source terms can be found in the reproducibility repository~\cite{artiano2026nonconservativeRepro}. We consider again a domain $[0, \sqrt{2}]^2$ with 16 quadrilateral elements, and same warping as the previous test case (see Figure~\ref{warped_mesh_2d}~\cite{RANOCHA2025113471}).

For this numerical test case we use the SBP operators introduced by~\cite{MATTSSON20181261}, where $N$ denotes the number of nodes per element, using the entropy- and total energy conservative scheme~\eqref{eq:ec_and_kep_fluxes}. The operators were built using the library~\cite{ranocha2021sbp}.
The convergence results reported in Table~\ref{convergence_euler_warped} show that the scheme is converging to the exact solution with the expected order of accuracy.

\begin{table}[htbp]
  \sisetup{
  output-exponent-marker=\text{e},
  round-mode=places,
  round-precision=2
}
\centering
  \caption{Convergence results for the 2D compressible Euler equations
           with $K$ elements and $N$ nodes per element on the warped mesh shown in Figure~\ref{warped_mesh_2d} at the final time $T = 2$.}
  \label{convergence_euler_warped}
  \begin{subtable}{0.29\textwidth}
  \centering
    \caption{Interior order of accuracy 4.}
    \begin{tabular}{rrrr}
      \toprule
      $K$ & $N$ & $L^2$ error & EOC \\
      \midrule
   16 & 20 & \num{2.06613e-5} &  \\
   16 & 30 & \num{5.15452e-6} & 3.42 \\
   16 & 40 & \num{1.86367e-6} & 3.54 \\
   16 & 50 & \num{9.4108e-7}  & 3.06 \\
      \bottomrule
    \end{tabular}
  \end{subtable}%
  \hspace{\fill}
  \begin{subtable}{0.29\textwidth}
  \centering
    \caption{Interior order of accuracy 6.}
    \begin{tabular}{rrrr}
      \toprule
      $K$ & $N$ & $L^2$ error & EOC \\
      \midrule
   16 & 20 & \num{9.70097e-7} &  \\
   16 & 30 & \num{1.62983e-7} & 4.40 \\
   16 & 40 & \num{5.00037e-8} & 4.11 \\
   16 & 50 & \num{2.02615e-8} & 4.05 \\
      \bottomrule
    \end{tabular}
  \end{subtable}%
  \hspace{\fill}
  \begin{subtable}{0.29\textwidth}
  \centering
    \caption{Interior order of accuracy 8.}
    \begin{tabular}{rrrr}
      \toprule
      $K$ & $N$ & $L^2$ error & EOC \\
      \midrule
   16 & 20 & \num{2.86431e-8} &  \\
   16 & 30 & \num{2.73902e-9} & 5.79 \\
   16 & 40 & \num{5.58782e-10} & 5.53 \\
   16 & 50 & \num{1.83601e-10} & 4.99 \\
   \bottomrule
    \end{tabular}
  \end{subtable}%
\end{table}

\subsubsection{Baroclinic instability}
The entropy-stable numerical flux for the compressible Euler equations with the internal energy as prognostic variable~\eqref{eq:es_flux} has been tested for the 3D baroclinic instability benchmark~\cite{ullrich2014} (see also~\cite{ullrich2016}). This configuration has become a standard test for assessing the ability of atmospheric models to reproduce midlatitude baroclinic instability.
The initial condition corresponds to a balanced state, axisymmetric solution of the deep-atmosphere system (see Appendix A of Ullrich et al.~\cite{ullrich2014}). Baroclinic growth is initiated by superimposing a localized Gaussian perturbation on the zonal wind in the northern midlatitudes. The disturbance undergoes an approximately linear amplification during the first week of simulation, after which nonlinear processes lead to front formation, wave steepening, and eventual breaking.
In our case the simulation is run for 20 days, so that both the quasi-linear phase and the transition to fully nonlinear dynamics are represented. The time integration is done with the fourth-order, nine-stage, low-storage FSAL explicit Runge-Kutta method with adaptive time-stepping developed in~\cite{Ranocha2022Optimal} implemented in OrdinaryDiffEq.jl~\cite{rackauckas2017differentialequations}. The spatial discretization employs an equiangular cubed–sphere grid with $16 \times 16$ elements on the horizontal and $8$ elements in the vertical, with polynomial degree $p = 5$ on each of the six panels. As volume flux we employ the entropy and energy conservative flux \eqref{eq:ec_and_kep_fluxes}. The long-term entropy stable behavior is illustrated in Fig.~\ref{baroclinic_entropy}, which reports the evolution of the total entropy measured relative to its initial value, and shows that the entropy stability is maintained throughout the long simulation. The surface pressure at day 10 is shown in Fig.~\ref{baroclinic_plot}, which is in good agreement with results presented in~\cite{ullrich2016, WARUSZEWSKI2022111507}.

\begin{figure}[htb]
    \centering
        \includegraphics[width=\textwidth]{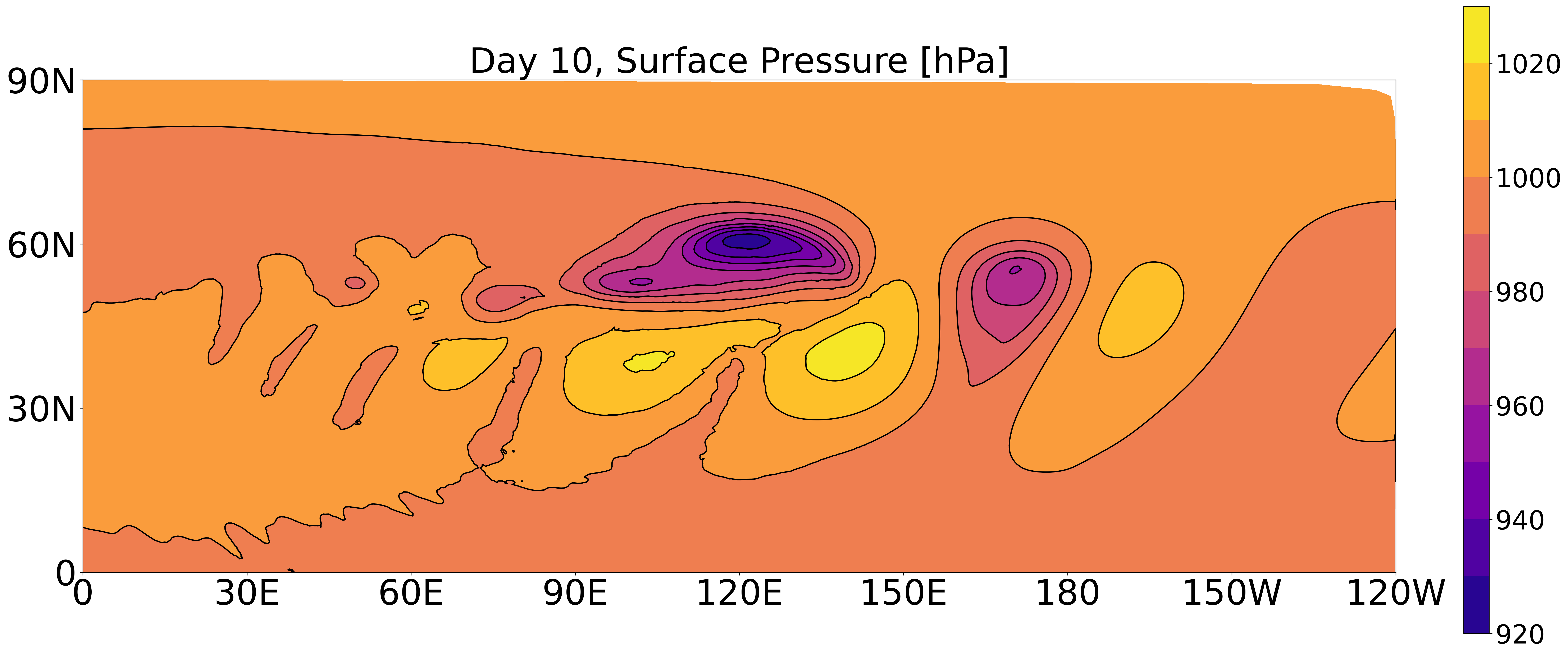}
    \caption{Contour of the surface pressure at day 10 for the baroclinic instability test case, with polynomial degree 5.}
    \label{baroclinic_plot}
\end{figure}

\begin{figure}[htb]
    \centering
        \includegraphics[width=\textwidth]{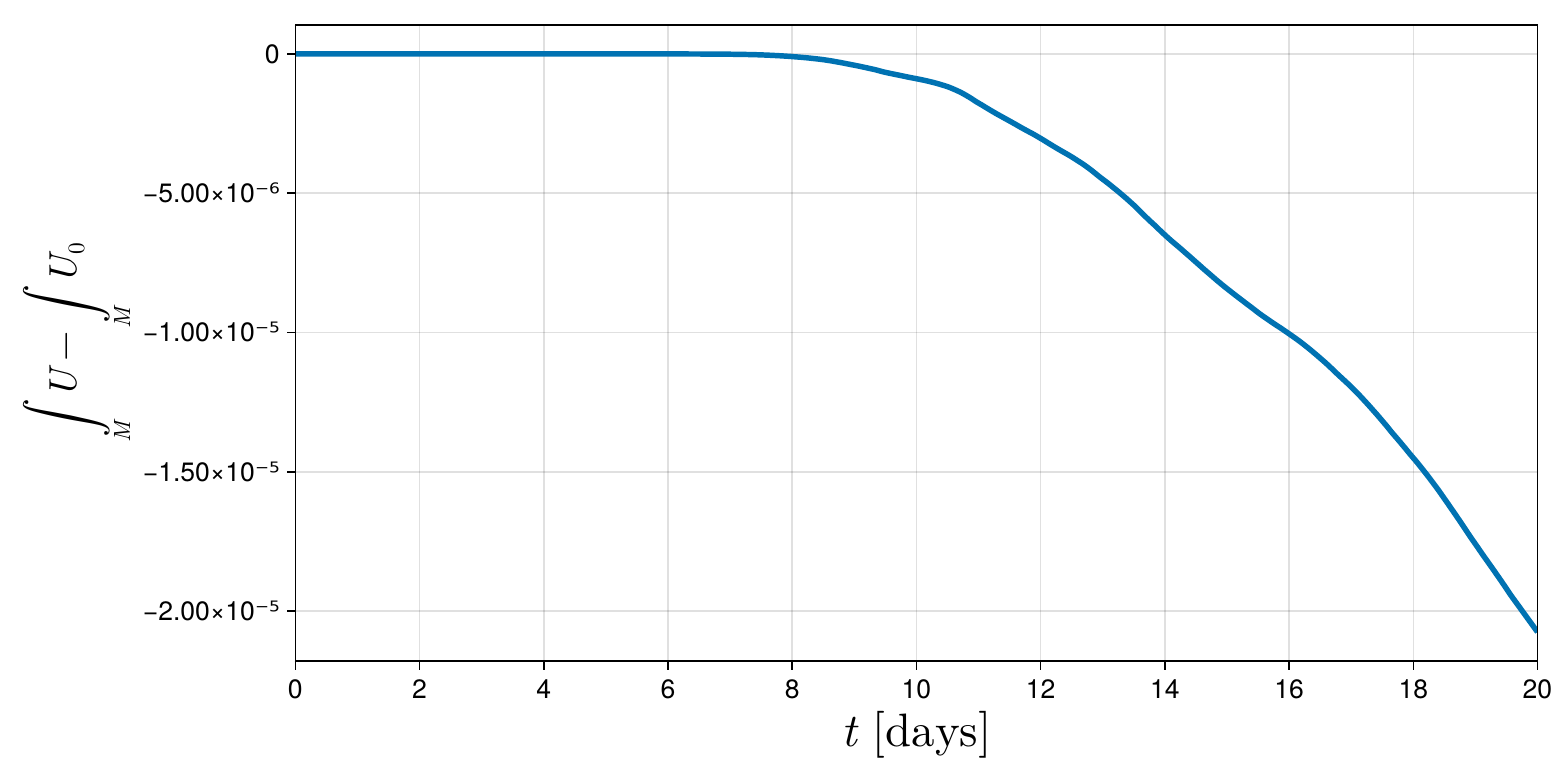}
    \caption{Evolution of the entropy over 20 days for the baroclinic instability test case, with polynomial degree 5 and the entropy stable numerical scheme~\eqref{eq:es_flux}.}
    \label{baroclinic_entropy}
\end{figure}
\section{Summary and conclusions}
\label{sec:summary_conclusions}
In this work, we have provided a framework that enables algorithmic construction of entropy-conservative/stable numerical methods for nonconservative hyperbolic systems.
Based on a purely algebraic characterization of Tadmor's condition, we have proven a necessary and sufficient condition for entropy conservation and stability for finite volume methods in Section~\ref{sec:conservation_laws}.
This is important since it can be used for any functional, even the preservation of pressure equilibria for the compressible Euler equations with real gas law, as discussed in Section~\ref{tadmor_non_convex_entropies}.
An extension to finite volume methods in fluctuation form for nonconservative systems is proposed in Section~\ref{sec:nonconservative_systems}, where we show that the entropy conservation condition of Castro et al.~\cite{Castro2013} is also a necessary condition, with its corresponding algebraic characterization.
Specific semi-discretizations of the nonconservative term are proposed which are amenable for algorithmic construction of affordable entropy-conservative and entropy-stable numerical fluxes for nonconservative hyperbolic systems.
In the second part of Section~\ref{sec:nonconservative_systems} we provide a full characterization of four specific semi-discrete forms for the nonconservative product $\vec{H}(\vec{u}) \partial_x \vec{g}(\vec{u})$, including their corresponding algebraic characterization and entropy-conservative/stable conditions.
A linear combination of two of the forms presented is then used to construct entropy-conservative and entropy-stable numerical fluxes for nonconservative hyperbolic systems in Section~\ref{sec:numerical_fluxes}, where several new numerical fluxes for different nonconservative hyperbolic systems are developed, including the Euler equations with internal energy as prognostic variable and a dispersive shallow-water model.
Furthermore, the framework is extended to high-order summation-by-parts (SBP) operators on both Cartesian and curved meshes in Sections~\ref{sec:sbp_cartesian} and~\ref{sec:sbp_curvilinear}, respectively.
This last extension also includes a discussion about split forms generated by SBP operators for specific choices of some the volume fluxes and a sufficient condition for the extension of well-balanced schemes to SBP operators on curvilinear coordinates.
Numerical experiments reported in Section~\ref{sec:numerical_experiments} show the robustness and accuracy of the novel numerical fluxes.

\appendix
\section*{Acknowledgments}

MA and HR were supported by the Deutsche Forschungsgemeinschaft
(DFG, German Research Foundation, project number 528753982
as well as within the DFG priority program SPP~2410 with project number 526031774).
This work was also supported by the Max Planck Graduate Center with the
Johannes Gutenberg University of Mainz (MPGC) and
the Mainz Institute of Multiscale Modeling (M\textsuperscript{3}ODEL).

\section*{Data availability statement}

All Julia source code and data needed to reproduce the numerical results
presented in this paper are available in our reproducibility repository~\cite{artiano2026nonconservativeRepro}.

\printbibliography

\end{document}